\newcolumntype{Y}{>{\centering\arraybackslash}X}
\DeclareSymbolFont{cyrletters}{OT2}{wncyr}{m}{n}
\DeclareMathSymbol{\Sha}{\mathalpha}{cyrletters}{"58}
\newtheorem{theorem}{Theorem}[section]
\newtheorem{corollary}[theorem]{Corollary}
\newtheorem{conjecture}[theorem]{Conjecture}
\newtheorem{proposition}[theorem]{Proposition}
\newtheorem{lemma}[theorem]{Lemma}
\theoremstyle{definition}
\newtheorem{definition}[theorem]{Definition}
\newtheorem{notation}[theorem]{Notation}
\newtheorem{example}[theorem]{Example}
\newtheorem{remark}[theorem]{Remark}
\newcommand{\Q}{\mathbb{Q}}
\newcommand{\R}{\mathbb{R}}
\newcommand{\C}{\mathbb{C}}
\newcommand{\Z}{\mathbb{Z}}
\newcommand{\cR}{\mathcal{R}}
\newcommand{\F}{\mathbb{F}}
\newcommand{\K}{\mathcal{K}}
\newcommand{\Frob}{\mathrm{Frob}}
\newcommand{\rank}{\mathrm{rk}}
\newcommand{\rk}{\mathrm{rk}}
\newcommand{\Gal}{\mathrm{Gal}}
\newcommand{\Jac}{\mathrm{Jac}\,}
\newcommand{\s}{\mathfrak{s}}
\newcommand{\ord}{\mathrm{ord}}
\newcommand{\coker}{\mathrm{coker}}
\newcommand{\E}{H_{f,\mathcal K}}
\newcommand{\Er}{H_{f,\R}}
\newcommand{\Ec}{H_{f,\C}}
\newcolumntype{C}[1]{>{\centering\let\newline\\\arraybackslash\hspace{0pt}}m{#1}}
\definecolor{amethyst}{rgb}{0.6, 0.4, 0.8}
\definecolor{atomictangerine}{rgb}{1.0, 0.6, 0.4}
\definecolor{deeppeach}{rgb}{1.0, 0.8, 0.64}
\definecolor{eggshell}{rgb}{0.94, 0.92, 0.84}
\definecolor{lightapricot}{rgb}{0.99, 0.84, 0.69}
\definecolor{lemonchiffon}{rgb}{1.0, 0.98, 0.8}
\definecolor{roundabout}{rgb}{1.0, 0.91, 0.75}
\definecolor{atomictangerine}{rgb}{1.0, 0.6, 0.4}
\definecolor{ruby}{rgb}{0.88, 0.07, 0.37}
\definecolor{sapphire}{rgb}{0.03, 0.15, 0.4}
\def\rootsep{0.03}               % horizontal space between roots in a cluster picture
\def\clustersep{0.06}            % horizontal and vertical space between parent & child cluster
\def\cnamescale{0.4}             % cluster names font size 
\def\cdepthscale{0.4}            % cluster depths and signs font size 
\def\cltopskip{1pt}              % space above a cluster picture
\def\clbottomskip{1pt}           % space below a cluster picture
\def\rootscale{0.5}   \def\rootcolor{gray}
\def\rootscaleA{0.7}  \def\rootcolorA{yellow}
\def\rootscaleB{0.5}  \def\rootcolorB{green}
\def\rootscaleC{0.4}  \def\rootcolorC{sapphire}
\def\rootscaleD{0.45}  \def\rootcolorD{ruby}
\tikzset{
  clA/.style = {very thick,black},
  clB/.style = {thick,purple}
}
\def\graphdslabelscale{0.6}
\def\GraphScale{0.6}
\tikzset{
  root/.style = {circle,scale=\rootscale,fill=\rootcolor},
    rc/.style 2 args = {right=#1*1.5*\clustersep of {#2.east|-first},root}, rr/.style = {right=\rootsep of {#1.east|-first},root},
  roott/.style = {circle,inner sep=-2pt,minimum size=5pt,black,font=\ttfamily\footnotesize},
    rct/.style 2 args = {right=#1*1.5*\clustersep of {#2.east|-first},roott}, rrt/.style = {right=\rootsep of {#1.east|-first},roott},
  rootA/.style = {circle,scale=\rootscaleA,ball color=\rootcolorA},
    rcA/.style 2 args = {right=#1*1.5*\clustersep of {#2.east|-first},rootA}, rrA/.style = {right=\rootsep of {#1.east|-first},rootA},
  rootB/.style = {circle,scale=\rootscaleB,ball color=\rootcolorB},
    rcB/.style 2 args = {right=#1*1.5*\clustersep of {#2.east|-first},rootB}, rrB/.style = {right=\rootsep of {#1.east|-first},rootB},
  rootC/.style = {diamond,scale=\rootscaleC,ball color=\rootcolorC},
    rcC/.style 2 args = {right=#1*1.5*\clustersep of {#2.east|-first},rootC}, rrC/.style = {right=\rootsep of {#1.east|-first},rootC},
  rootD/.style = {circle,scale=\rootscaleD,ball color=\rootcolorD},
    rcD/.style 2 args = {right=#1*1.5*\clustersep of {#2.east|-first},rootD}, rrD/.style = {right=\rootsep of {#1.east|-first},rootD},
  cluster/.style = {draw=black!90,thick,rounded corners,inner sep=22*\clustersep,outer xsep=22*\clustersep,fit=#1},
  clabel/.style  = {anchor=west,scale=\cdepthscale,black,inner sep=0,outer xsep=1,outer ysep=0},
  clabelL/.style = {above right=-\clustersep of #1t.north east,clabel},
  clabelD/.style = {below right=-\clustersep of #1t.south east,clabel},
  clouter/.style = {inner sep=0,outer sep=0,fit=#1}
}
\def\Cluster #1 = #2;{\node[cluster=#2] (#1) {};}
\def\ClusterL #1[#2] = #3;{
  \node[cluster=#3] (#1t) {}; \node[clabelL=#1] (#1l) {$#2$}; \node[clouter=(#1t)(#1l)] (#1) {};}
\def\ClusterD #1[#2] = #3;{
  \node[cluster=#3] (#1t) {}; \node[clabelD=#1] (#1d) {$#2$}; \node[clouter=(#1t)(#1d)] (#1) {};}
\def\ClusterLD #1[#2][#3] = #4;{
  \node[cluster=#4] (#1t) {}; \node[clabelL=#1] (#1l) {$#2$}; 
  \node[clabelD=#1] (#1d) {$#3$}; \node[clouter=(#1t)(#1l)(#1d)] (#1) {};}
\def\ClusterLDName #1[#2][#3][#4] = #5;{
  \node[cluster=#5] (#1t) {}; \node[clabelL=#1] (#1l) {$#2$}; 
  \node[clabelD=#1] (#1d) {$#3$}; 
  \node[scale=\cnamescale,above=\clustersep/3 of #1t,inner sep=0, outer sep=0] (#1n) {$#4$}; 
  \node[clouter=(#1l)(#1d)(#1t)] (#1) {};}
\newcommand{\Root}[4][]{
  \ifx\relax#2\relax\node[rr#1=#3] (#4) {};\else\node[rc#1={#2}{#3}] (#4) {};\fi}
\newcommand{\RootT}[5][]{
  \ifx\relax#2\relax\node[rrt#1=#3] (#4) {#5};\else\node[rct#1={#2}{#3}] (#4) {#5};\fi}
\def\frob(#1)(#2){\path[draw,thick,shorten <=-22*\clustersep,shorten >=-22*\clustersep](#1.east)--(#2.west|-#1){};}
\def\pb#1{\pbox[c]{\textwidth}{\hfil #1\hfil}}
\long\def\clusterpicture#1\endclusterpicture{\pb{\vbox to \cltopskip{\vfill}\\%
  \begin{tikzpicture}\node[coordinate] (first) {};#1\end{tikzpicture}\\[-11pt]\vbox to \clbottomskip{\vfill}}}   
\long\def\clusterpictureopt#1#2\endclusterpicture{\pb{\vbox to \cltopskip{\vfill}\\%
  \begin{tikzpicture}[#1]\node[coordinate] (first) {};#2\end{tikzpicture}\\[-11pt]\vbox to \clbottomskip{\vfill}}}
\def\pb#1{\pbox[c]{\textwidth}{\hfil #1\hfil}}
\def\GraphVertices{\SetVertexNormal[Shape=circle, FillColor=blue!50, LineColor=blue!50, LineWidth=0.8pt]
  \tikzset{VertexStyle/.append style = {inner sep=0.5pt,minimum size=0.3em,font = \tiny\bfseries}}}
\def\BlueEdges{  \SetUpEdge[lw=0.8pt,color=blue!70]
   \tikzset{EdgeStyle/.append style = {shorten <=0.5pt,shorten >=0.5pt}}}
\def\LoopW(#1){
  \path[draw,-,thick,color=blue!70] (#1) edge[out=155,in=90] ($(#1)-(1.3,0)$);
  \path[draw,-,thick,color=blue!70] (#1) edge[out=210,in=270] ($(#1)-(1.3,0)$);
}
\def\LoopE(#1){
  \path[draw,-,thick,color=blue!70] (#1) edge[out=25,in=90] ($(#1)+(1.3,0)$);
  \path[draw,-,thick,color=blue!70] (#1) edge[out=-25,in=270] ($(#1)+(1.3,0)$);
}
\def\LoopS(#1){
  \path[draw,-,thick,color=blue!70] (#1) edge[out=115,in=180] ($(#1)+(0,1.2)$);
  \path[draw,-,thick,color=blue!70] (#1) edge[out=65,in=0] ($(#1)+(0,1.2)$);
}
\def\LoopN(#1){
  \path[draw,-,thick,color=blue!70] (#1) edge[out=-115,in=180] ($(#1)-(0,1.2)$);
  \path[draw,-,thick,color=blue!70] (#1) edge[out=-65,in=0] ($(#1)-(0,1.2)$);
}
\def\EdgeW(#1){
  \path[draw,-,thick,color=blue!70] (#1+) edge[out=180,in=90] ($(#1+)-(1.3,0.3)$);
  \path[draw,-,thick,color=blue!70] (#1-) edge[out=180,in=270] ($(#1+)-(1.3,0.3)$);
}
\def\EdgeE(#1){
  \path[draw,-,thick,color=blue!70] (#1+) edge[out=0,in=90] ($(#1-)+(1.3,0.3)$);
  \path[draw,-,thick,color=blue!70] (#1-) edge[out=0,in=270] ($(#1-)+(1.3,0.3)$);
}
\def\EdgeS(#1){
  \path[draw,-,thick,color=blue!70] (#1+) edge[out=90,in=0] ($(#1-)+(0.3,1.3)$);
  \path[draw,-,thick,color=blue!70] (#1-) edge[out=90,in=180] ($(#1-)+(0.3,1.3)$);
}
\def\EdgeN(#1){
  \path[draw,-,thick,color=blue!70] (#1+) edge[out=270,in=0] ($(#1+)-(0.3,1.3)$);
  \path[draw,-,thick,color=blue!70] (#1-) edge[out=270,in=180] ($(#1+)-(0.3,1.3)$);
}
\def\GCircle(#1,#2)(#3,#4){
  \path(#1,#2) node[coordinate] (1) {};
  \path(#3,#4) node[coordinate] (2) {};
  \path[draw,-,thick,color=blue!70] (1) edge[out=90,in=90] (2);
  \path[draw,-,thick,color=blue!70] (2) edge[out=270,in=270] (1);
}
\def\EdgeSign(#1)(#2)#3(#4)#5{
  \node at ($(#1)!#3!(#2) + (#4)$) [color=black, scale=\graphdslabelscale] {$\scriptstyle #5$};
}
\def\GraphEdgeSignDist{0.55}
\def\GraphEdgeSignS(#1)(#2)#3#4{\EdgeSign(#1)(#2)#3(0,-\GraphEdgeSignDist){#4}}
\def\VSwap#1#2#3#4{\path[draw](#1) edge[<->,#3,shorten >=#4pt,shorten <=#4pt] (#2){};}
\def\VArr#1#2#3#4{\path[draw](#1) edge[->,#3,shorten >=#4pt,shorten <=#4pt] (#2){};}
\def\ESwapOfs#1#2#3#4#5#6#7#8{\VSwap{$(#1)!0.5!(#2) + (#6)$}{$(#3)!0.5!(#4) + (#7)$}{#5}{#8}}
\def\EArrOfs#1#2#3#4#5#6#7#8{\VArr{$(#1)!0.5!(#2) + (#6)$}{$(#3)!0.5!(#4) + (#7)$}{#5}{#8}}
\def\goodE{\clusterpicture            % [1,2,3]_{\s}
  \Root[D] {1} {first} {r1};
  \Root[D] {} {r1} {r2};
  \Root[D] {} {r2} {r3};
  \ClusterD c1[0] = (r1)(r2)(r3);
\endclusterpicture}
\def\tpE{\clusterpicture            % [[1,2]_{\frac{n}{2}}^{+},3]A
  \Root[D] {1} {first} {r2};
%  \Root {1} {r1} {r2};
  \Root[D] {} {r2} {r3};
  \ClusterLD c1[{+}][{\frac{n}{2}}] = (r2)(r3);
  \Root[D] {1} {c1} {r4};
 % \ClusterD c2[{2r}] = (r4)(r5);
  \ClusterD c3[0] = (r2)(c1)(r4);
\endclusterpicture}
\def\tmE{\clusterpicture            % [[1,2]_{\frac{n}{2}}^{-},3]A
  \Root[D] {1} {first} {r2};
%  \Root {1} {r1} {r2};
  \Root[D] {} {r2} {r3};
  \ClusterLD c1[{-}][{\frac{n}{2}}] = (r2)(r3);
  \Root[D] {1} {c1} {r4};
 % \ClusterD c2[{2r}] = (r4)(r5);
  \ClusterD c3[0] = (r2)(c1)(r4);
\endclusterpicture}
\def\goodEprime{\clusterpicture            % [1,2,3,4]_{\s}
  \Root[C] {1} {first} {r1};
  \Root[D] {} {r1} {r2};
  \Root[D] {} {r2} {r3};
    \Root[D] {} {r3} {r4};
  \ClusterD c1[0] = (r1)(r2)(r3)(r4);
\endclusterpicture}
\def\ctpEprime{\clusterpicture            % [[1,2]_{n}^{+},3,4]A
  \Root[C] {1} {first} {r2};
%  \Root {1} {r1} {r2};
  \Root[D] {} {r2} {r3};
  \ClusterLD c1[{+}][n] = (r2)(r3);
  \Root[D] {1} {c1} {r4};
  \Root[D] {} {r4} {r5};
 % \ClusterD c2[{2r}] = (r4)(r5);
  \ClusterD c3[0] = (r2)(c1)(r4)(r5);
\endclusterpicture}
\def\ctmEprime{\clusterpicture            % [[1,2]_{n}^{-},3,4]A
  \Root[C] {1} {first} {r2};
%  \Root {1} {r1} {r2};
  \Root[D] {} {r2} {r3};
  \ClusterLD c1[{-}][n] = (r2)(r3);
  \Root[D] {1} {c1} {r4};
  \Root[D] {} {r4} {r5};
 % \ClusterD c2[{2r}] = (r4)(r5);
  \ClusterD c3[0] = (r2)(c1)(r4)(r5);
\endclusterpicture}
\def\ctEprime{\clusterpicture            % [[1,2]_{n}^{-},3,4]A
  \Root[C] {1} {first} {r2};
%  \Root {1} {r1} {r2};
  \Root[D] {} {r2} {r3};
  \ClusterLD c1[+][1] = (r2)(r3);
  \Root[D] {1} {c1} {r4};
  \Root[D] {} {r4} {r5};
 % \ClusterD c2[{2r}] = (r4)(r5);
  \ClusterD c3[0] = (r2)(c1)(r4)(r5);
\endclusterpicture}
\def\btpEprime{\clusterpicture            % [[1,2]_{\frac{n}{2}}^{+},3,4]A
  \Root[C] {1} {first} {r5};
  \Root[D] {} {r5} {r6};
  \Root[D] {2} {r6} {r1};
  \Root[D] {} {r1} {r2};
  \ClusterLD c1[{+}][\frac{n}{2}] = (r1)(r2);
  \ClusterD c2[0] = (r5)(r6)(c1);
\endclusterpicture}
\def\btmEprime{\clusterpicture            % [[1,2]_{\frac{n}{2}}^{-},3,4]A
  \Root[C] {1} {first} {r5};
  \Root[D] {} {r5} {r6};
  \Root[D] {2} {r6} {r1};
  \Root[D] {} {r1} {r2};
  \ClusterLD c1[{-}][\frac{n}{2}] = (r1)(r2);
  \ClusterD c2[0] = (r5)(r6)(c1);
\endclusterpicture}
\def\Jgood{\clusterpicture            % [1,2,3]_{\s}
  \Root {1} {first} {r1};
  \Root {} {r1} {r2};
  \Root {} {r2} {r3};
  \ClusterD c1[0] = (r1)(r2)(r3);
\endclusterpicture}
\def\JCtwinP{\clusterpicture            % [[1,2]_{\frac{n}{2}}^{+},3]A
  \Root {1} {first} {r2};
  \Root {} {r2} {r3};
  \ClusterLD c1[+][n] = (r2)(r3);
  \Root {1} {c1} {r4};
  \ClusterD c3[0] = (r2)(c1)(r4);
\endclusterpicture}
\def\JMtwinP{\clusterpicture            % [[1,2]_{\frac{n}{2}}^{+},3]A
  \Root {1} {first} {r2};
  \Root {} {r2} {r3};
  \ClusterLD c1[+][\frac{n}{2}] = (r2)(r3);
  \Root {1} {c1} {r4};
  \ClusterD c3[0] = (r2)(c1)(r4);
\endclusterpicture}
\def\JCtwinM{\clusterpicture            % [[1,2]_{\frac{n}{2}}^{+},3]A
  \Root {1} {first} {r2};
  \Root {} {r2} {r3};
  \ClusterLD c1[-][n] = (r2)(r3);
  \Root {1} {c1} {r4};
  \ClusterD c3[0] = (r2)(c1)(r4);
\endclusterpicture}
\def\JMtwinM{\clusterpicture            % [[1,2]_{\frac{n}{2}}^{+},3]A
  \Root {1} {first} {r2};
  \Root {} {r2} {r3};
  \ClusterLD c1[-][\frac{n}{2}] = (r2)(r3);
  \Root {1} {c1} {r4};
  \ClusterD c3[0] = (r2)(c1)(r4);
\endclusterpicture}
\def\JCtwin{\clusterpicture            % [[1,2]_{\frac{n}{2}}^{+},3]A
  \Root {1} {first} {r2};
  \Root {} {r2} {r3};
  \ClusterLD c1[+][1] = (r2)(r3);
  \Root {1} {c1} {r4};
  \ClusterD c3[0] = (r2)(c1)(r4);
\endclusterpicture}
\def\tgrGB{\raise-7pt\hbox{\begin{tikzpicture}[scale=\GraphScale]
  \GraphVertices
  \Vertex[x=1.50,y=0.000,L=1]{1};
  \coordinate (2) at (0.000,0.000);
  \BlueEdges
  \LoopW(1)
\GraphEdgeSignS(1)(2){0.5}{n}\end{tikzpicture}}}
\def\tgrGBex{\raise-7pt\hbox{\begin{tikzpicture}[scale=\GraphScale]
  \GraphVertices
  \Vertex[x=1.50,y=0.000,L=1]{1};
  \coordinate (2) at (0.000,0.000);
  \BlueEdges
  \LoopW(1)
\GraphEdgeSignS(1)(2){0.5}{1}\end{tikzpicture}}}
\def\tgrGC{\raise-7pt\hbox{\begin{tikzpicture}[scale=\GraphScale]
  \GraphVertices
  \Vertex[x=1.50,y=0.000,L=1]{1};
  \coordinate (2) at (0.000,0.000);
  \BlueEdges
  \LoopW(1)
\GraphEdgeSignS(1)(2){0.5}{n}\ESwapOfs1212{}{0,-0.25}{0,0.25}{0.5}\end{tikzpicture}}}
\def\tgrGD{\raise-7pt\hbox{\begin{tikzpicture}[scale=\GraphScale]
  \GraphVertices
  \Vertex[x=1.50,y=0.000,L=\relax]{1};
  \coordinate (2) at (3.00,0.000);
  \coordinate (3) at (0.000,0.000);
  \BlueEdges
  \LoopE(1)
  \LoopW(1)
\GraphEdgeSignS(1)(3){0.5}{n}\GraphEdgeSignS(1)(2){0.5}{n}\end{tikzpicture}}}
\def\tgrGE{\raise-7pt\hbox{\begin{tikzpicture}[scale=\GraphScale]
  \GraphVertices
  \Vertex[x=1.50,y=0.000,L=\relax]{1};
  \coordinate (2) at (3.00,0.000);
  \coordinate (3) at (0.000,0.000);
  \BlueEdges
  \LoopE(1)
  \LoopW(1)
\GraphEdgeSignS(1)(3){0.5}{n}\GraphEdgeSignS(1)(2){0.5}{n}\ESwapOfs1212{}{0,-0.25}{0,0.25}{0.5}\end{tikzpicture}}}
\def\tgrGF{\raise-7pt\hbox{\begin{tikzpicture}[scale=\GraphScale]
  \GraphVertices
  \Vertex[x=1.50,y=0.000,L=\relax]{1};
  \coordinate (2) at (3.00,0.000);
  \coordinate (3) at (0.000,0.000);
  \BlueEdges
  \LoopE(1)
  \LoopW(1)
\GraphEdgeSignS(1)(3){0.5}{n}\GraphEdgeSignS(1)(2){0.5}{n}\ESwapOfs1313{}{0,-0.25}{0,0.25}{0.5}\ESwapOfs1212{}{0,-0.25}{0,0.25}{0.5}\end{tikzpicture}}}
\def\tgrGG{\raise-7pt\hbox{\begin{tikzpicture}[scale=\GraphScale]
  \GraphVertices
  \Vertex[x=1.50,y=0.000,L=\relax]{1};
  \coordinate (2) at (3.00,0.000);
  \coordinate (3) at (0.000,0.000);
  \BlueEdges
  \LoopE(1)
  \LoopW(1)
\GraphEdgeSignS(1)(3){0.5}{n}\GraphEdgeSignS(1)(2){0.5}{n}\ESwapOfs1312{in=160,out=20}{0.2,0.3}{-0.2,0.3}{0.5}\end{tikzpicture}}}
\def\tgrGH{\raise-7pt\hbox{\begin{tikzpicture}[scale=\GraphScale]
  \GraphVertices
  \Vertex[x=1.50,y=0.000,L=\relax]{1};
  \coordinate (2) at (3.00,0.000);
  \coordinate (3) at (0.000,0.000);
  \BlueEdges
  \LoopE(1)
  \LoopW(1)
\GraphEdgeSignS(1)(3){0.5}{n}\GraphEdgeSignS(1)(2){0.5}{n}\EArrOfs1312{in=150,out=30}{0.1,0.29}{0,0.35}{0.5}\EArrOfs1213{in=-60,out=-60}{0,0.2}{0.3,-0.25}{0.5}\end{tikzpicture}}}
\def\tgrGA{\raise-3pt\hbox{\begin{tikzpicture}[scale=\GraphScale]
  \GraphVertices
  \Vertex[x=0.000,y=0.000,L=2]{1};
  \BlueEdges
\end{tikzpicture}}}
\title{The 2-parity conjecture for elliptic curves with isomorphic 2-torsion}\author{Holly Green, C\'eline Maistret}\date{}
\address{Department of Mathematics, University College London, London WC1H 0AY, UK}
\email{h.green.19@ucl.ac.uk}
\address{School of Mathematics, University of Bristol, Bristol BS8 1UG, UK}
\email{celine.maistret@bristol.ac.uk}
\subjclass[2010]{11G40 (11G10, 11G05, 11G07, 14K15)}
\begin{document}

\begin{abstract}
The Birch and Swinnerton--Dyer conjecture famously predicts that the rank of an elliptic curve can be computed from its $L$-function. In this article we consider a weaker version of this conjecture called the parity conjecture and prove the following. Let $E_1$ and $E_2$ be two elliptic curves defined over a number field $K$ whose 2-torsion groups are isomorphic as Galois modules. 
%We show that the 2-parity conjecture holds for $E_1$ if and only if it holds for $E_2$. In other words, 
Assuming finiteness of  %$2$-primary part of 
the Shafarevich-Tate groups of $E_1$ and $E_2$, we show that the Birch and Swinnerton-Dyer conjecture correctly predicts the parity of the rank of $E_1\times E_2$. Using this result, we complete the proof of the $p$-parity conjecture for elliptic curves over totally real fields. 
%We achieve this result using an isogeny to the Jacobian of a certain genus 2 curve, for which we also prove the $2$-parity conjecture.
\end{abstract}

\maketitle

{\hypersetup{}
% or \hypersetup{linkcolor=black}, if the colorlinks=true option of hyperref is used
\setcounter{tocdepth}{1}
\tableofcontents}

%%%%%%%%%%%%%%%%%%%%%%%%%%%%%%%%%
\section{Introduction}
%%%%%%%%%%%%%%%%%%%%%%%%%%%%%%%%%
\subsection{Main results}\label{s:mainresults}
For an abelian variety $A$ over a number field $K$, the Birch--Swinnerton-Dyer conjecture predicts that the algebraic rank of $A/K$ is given by the order of vanishing of its $L$-function $L(A/K,s)$ at $s=1$. In addition, the completed $L$-function of $A/K$ is expected to satisfy a functional equation centered around $s=1$, whose sign is given by the global root number of $A/K$. It then follows that the global root number should predict the parity of the order of vanishing of $L(A/K,s)$ at $s=1$, which combined with the Birch--Swinnerton-Dyer yields the parity conjecture. 
\begin{conjecture}[Parity conjecture] For every abelian variety $A$ over a number field $K$, 
$$
(-1)^{\rk(A/K)} = w_{A/K},
$$
where $\rk(A/K)$ and $w_{A/K}$ denote the rank and global root number of $A/K$ respectively.
\end{conjecture}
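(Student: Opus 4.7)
The full parity conjecture for arbitrary abelian varieties is wide open and cannot be established in one stroke; my plan is accordingly to attack it one prime at a time by passing to the $p$-parity conjecture, which predicts that the parity of the $\Z_p$-corank of the $p^\infty$-Selmer group of $A/K$ is given by $w_{A/K}$. Since the Mordell--Weil rank and the Selmer corank differ by the corank of $\Sha(A/K)[p^\infty]$, which vanishes modulo $2$ under the standard finiteness hypothesis (as $\Sha$ carries an alternating Cassels--Tate pairing), establishing $p$-parity for every prime $p$ would recover the original statement conditionally on finiteness of $\Sha$, and unconditionally whenever a single prime can be handled.

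To reduce the $p$-parity statement to tractable inputs, I would invoke the Dokchitser brothers' machinery of regulator constants and Brauer relations in finite Galois groups $\Gal(F/K)$. For a Brauer relation $\Theta=\sum_i H_i-\sum_j H_j'$, the regulator constant $\mathcal{C}_\Theta(A)$ is a rational number depending only on local data at places of $K$, and there is a product formula expressing the parity of $\sum_i \rk(A/F^{H_i})-\sum_j \rk(A/F^{H_j'})$ as a sum of local contributions involving Tamagawa numbers, component groups, and local root numbers. Combining this with Kramer--Tunnell-type local identities should reduce the $p$-parity question for $A/K$ to checking it over auxiliary extensions where $A$ acquires enough structure, for instance semistable or totally toric reduction, or an isogeny decomposition into lower-dimensional factors.

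For elliptic curves one can then invoke known inputs: potential modularity over totally real fields yields the $p$-parity conjecture for individual elliptic curves in many configurations, and Brauer-relation arguments propagate it through soluble extensions. The crux, and the point at which this paper enters, is that in the totally real case the reduction chain ultimately confronts a configuration that is \emph{only} controlled by the parity of the rank of a product $E_1\times E_2$ of elliptic curves with $E_1[2]\cong E_2[2]$ as $\Gal(\bar K/K)$-modules. The main theorem stated in the abstract, proved under $\Sha[2^\infty]$-finiteness, supplies precisely this missing parity, and feeding it into the Dokchitser reduction completes $2$-parity for all elliptic curves over totally real fields.

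The principal obstacle in going further is that for abelian varieties of dimension $\geq 2$ over a general number field, neither the necessary automorphy results nor a sufficiently rich supply of Brauer relations is available, so one cannot isolate the Selmer rank of $A$ from that of its twists and isogenous companions. Any serious assault on the full conjecture therefore seems to require one of two new ingredients: a general automorphy theorem yielding the functional equation and hence $w_{A/K}$ as a completed $L$-value sign intrinsically, or a new method for controlling $p^\infty$-Selmer coranks in non-soluble Galois towers. Short of these, the proposal yields the parity conjecture only in those dimension/prime combinations where such inputs exist, which is exactly what the body of the paper is designed to exploit.
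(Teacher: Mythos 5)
The statement you were asked about is the Parity Conjecture itself, which the paper records as a \emph{conjecture} and explicitly describes as having no known unconditional proof; the paper supplies no proof of it, so there is nothing to compare your attempt against line by line. Your proposal correctly recognises this and, rather than claiming a proof, gives an accurate survey of the standard strategy (pass to $p$-parity, use that $\Sha[p^\infty]$ contributes an even amount to the Selmer corank under the finiteness hypothesis, and propagate known cases via regulator constants, Brauer relations and isogeny/twisting arguments). That is consistent with how the paper positions its own results: it proves instances of the $2$-parity conjecture and only deduces statements about the parity conjecture proper after assuming finiteness of $\Sha[2^\infty]$.

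One correction to how you describe the paper's role in the totally real case. The transfer theorem for curves with isomorphic $2$-torsion (Theorem \ref{thm:2tors}) is a statement about the \emph{$2$-parity} conjecture and is unconditional; finiteness of $\Sha[2^\infty]$ is invoked only in the corollary converting $2^\infty$-Selmer rank parity into Mordell--Weil rank parity. Consequently, the completion of the $p$-parity conjecture for elliptic curves over totally real fields (Theorem \ref{2PCtotallyreal} and Corollary \ref{maincorollary}) does not require any $\Sha$ hypothesis: one replaces a CM curve $E$ by a companion $E_{\gamma_0}$ with $E[2]\cong E_{\gamma_0}[2]$ and non-integral $j$-invariant, applies the known $2$-parity result for such curves, and transfers back. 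Your phrase ``proved under $\Sha[2^\infty]$-finiteness, supplies precisely this missing parity'' conflates the conditional parity statement with the unconditional $2$-parity one. With that caveat, your assessment of what is and is not within reach --- and why the general abelian variety case remains out of reach --- matches the paper's own framing.
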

The global root number is defined as a product of local root numbers which can be computed via the local Galois representations of $A/K$. The parity conjecture therefore offers an effective way to compute the parity of $\rk(A/K)$. 
\newpage
There is no known successful approach to prove the parity conjecture unconditionally at present. However, assuming the finiteness of the Shafarevich-Tate group of $A/K$, several instances of the parity conjecture have been proven via the $p$-parity conjecture. Recall that, for each prime $p$, $\Sha_{A/K}[p^{\infty}]$ decomposes as $(\Q_p/\Z_p)^{\delta_p} \times \Delta$, where $\Delta$ is a finite group and $\delta_p$ is a non-negative integer. \\Define the $p^{\infty}$-Selmer rank as $\rk_p(A/K) = \rk(A/K) + \delta_p$. If $\Sha_{A/K}$ is indeed finite then $\delta_p =0$ for all $p$ so that $\rk_p(A/K) = \rk(A/K)$ for all $p$. This prompts the $p$-parity conjecture.
\begin{conjecture}[$p$-Parity conjecture] For every abelian variety $A$ over a number field $K$ and every prime $p$,
$$
(-1)^{\rk_p(A/K)} = w_{A/K}.
$$
\end{conjecture}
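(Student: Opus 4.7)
The plan is to reduce the conjecture to cases accessible to existing regulator-constant technology, then appeal to the paper's main theorem to clear the elliptic-curve obstructions. The first reduction is via isogeny decompositions: both $(-1)^{\rk_p(A/K)}$ and $w_{A/K}$ are multiplicative under isogeny decompositions of $A$. For $w$ this follows from additivity of local epsilon factors over direct sums of $\ell$-adic representations, and for $\rk_p$ it follows from isogeny-invariance of $p^\infty$-Selmer parity (with some care tracking the kernel when $p$ divides the isogeny degree). Poincar\'e complete reducibility therefore reduces the claim to the case that $A/K$ is simple. The second reduction uses Weil restriction: the identity $V_\ell(\mathrm{Res}_{K/\Q} A) = \Ind^{\Gal(\overline{\Q}/\Q)}_{\Gal(\overline{\Q}/K)} V_\ell(A)$ gives equality $\rk_p(\mathrm{Res}_{K/\Q} A/\Q) = \rk_p(A/K)$ and matches root numbers via inductivity of local epsilon factors in degree zero, so we may assume $K = \Q$ at the cost of enlarging the class of abelian varieties under consideration.

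Next, I would apply the Dokchitser--Dokchitser framework of Brauer relations and regulator constants. A relation $\sum_i n_i \Ind_{H_i}^G \mathbf{1}_{H_i} = 0$ in a finite $\Gal(F/\Q)$ packages the product $\prod_i (-1)^{n_i \rk_p(A/F^{H_i})}$ into an explicit local expression involving Tamagawa numbers, real periods and regulator constants, matched on the root-number side by inductivity of epsilon factors; this reduces the conjecture to a place-by-place local identity. For $A$ isogenous (over $\Q$, possibly after Weil restriction) to a product of elliptic curves, the paper's main theorem on pairs $E_1, E_2$ with $E_1[2] \simeq E_2[2]$ supplies precisely the local input needed to handle the cases where existing regulator-constant arguments alone do not suffice. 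Combined with the known $p$-parity results for elliptic curves over totally real fields (which the paper also completes), this establishes the conjecture for all abelian varieties isogenous to a product of elliptic curves.

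The main obstacle, where the strategy genuinely stalls, is a higher-dimensional simple $A$ over $\Q$ whose isogeny class is not rich enough for any Brauer relation on a reasonable Galois extension to produce a useful identity, and whose endomorphism algebra does not force the Selmer group to decompose into pieces amenable to the elliptic-curve methods. The proposal here is to seek either automorphy of $A$, providing a functional equation and hence the sign compatibility directly, or a higher-dimensional analogue of the isomorphic-torsion technique developed in the paper: comparing two simple abelian varieties with isomorphic $\ell$-torsion as Galois modules and running a shared local-comparison argument on $\rk_p$ of their product. Both routes are in general out of reach with current technology, so the realistic target of the program outlined above is the conjecture for all abelian varieties isogenous to products of elliptic curves, with the higher-dimensional simple case identified as the precise remaining bottleneck.
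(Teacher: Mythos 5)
The statement you were asked to prove is the $p$-parity conjecture itself, which the paper states as a \emph{conjecture} and does not prove: the introduction explicitly notes that it ``is still open for elliptic curves over number fields in general'', and the paper's actual theorems are either relative statements (the $2$-parity conjecture holds for $E_1$ iff it holds for $E_2$ when $E_1[2]\cong E_2[2]$) or unconditional results confined to totally real base fields and to Jacobians of the special genus-$2$ curves $y^2=cf(x^2)$. So there is no proof in the paper to compare yours against, and your own conclusion --- that the strategy stalls at general simple abelian varieties --- is the correct one; to your credit you say so explicitly.

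However, the intermediate claim that your program ``establishes the conjecture for all abelian varieties isogenous to a product of elliptic curves'' is an overclaim. The $p$-parity conjecture is open even for a single elliptic curve over a general number field: the unconditional inputs you would invoke (Dokchitser--Dokchitser and Nekov\'a\v{r} for totally real fields, \v{C}esnavi\v{c}ius for curves with a $p$-isogeny, Kramer--Tunnell for quadratic twists, and this paper's Theorem \ref{2PCtotallyreal} for the CM case at $p=2$) all carry hypotheses on the base field or on the existence of an isogeny. Your Weil restriction step does not repair this: $\mathrm{Res}_{K/\Q}A$ for an elliptic curve $A/K$ is a higher-dimensional abelian variety over $\Q$ that is not in general isogenous to a product of elliptic curves over $\Q$, so after restricting you land outside the class you claim to have handled. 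Likewise, the regulator-constant machinery only controls the parity of integer combinations $\sum_i n_i\,\rk_p(A/F^{H_i})$ attached to Brauer relations, not the parity of $\rk_p(A/K)$ itself, except in special Galois-theoretic situations. The honest summary is that your outline reproduces the known partial results and correctly locates the obstructions, but the middle of the argument asserts more than those inputs deliver, and the statement remains a conjecture both in the paper and after your attempt.
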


In certain settings, the parity of the $p^{\infty}$-Selmer rank is computable and this can lead to a proof of the $p$-parity conjecture. Most notably the $p$-parity conjecture is known for elliptic curves over $\Q$ (\cite{DDBSDmodulosquare}), for elliptic curves over number fields admitting a $p$-isogeny (\cite{kestutis}), for elliptic curves over totally real number fields when $p \ne 2$ (and in all non CM cases and some CM cases when $p=2$) (\cite{dokchitser2011root},\cite{Nekovar},\cite{Nekovar1},\cite{Nekovar2},\cite{Nekovar3}) and for quadratic twists of elliptic curves (\cite{KramerTunnell}). In particular the $p$-parity conjecture is still open for elliptic curves over number fields in general.  In higher dimensions, the $p$-parity conjecture is known for principally polarized abelian surfaces subject to local conditions (\cite{parityforab}), for Jacobians of hyperelliptic curves base-changed from a subfield of index 2 (\cite{Morgan}) and abelian varieties admitting a suitable isogeny (\cite{Coates}).
In this paper, we consider two elliptic curves whose 2-torsion groups are isomorphic as Galois modules and prove the following. 
\begin{theorem}[Theorem \ref{thm:2tors}]\label{maintheorem}
Let $E_1, E_2/K$ be elliptic curves over a number field. If $E_1[2]\cong E_2[2]$ as Galois modules, then the $2$-parity conjecture holds for $E_1/K$ if and only if it holds for $E_2/K$.
\end{theorem}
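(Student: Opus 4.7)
The plan is to recast the biconditional statement of the theorem as a single $2$-parity conjecture for the abelian surface $A:=E_1\times E_2$ over $K$. Since ranks and root numbers are respectively additive and multiplicative under products, one has $\rk_2(A/K)=\rk_2(E_1/K)+\rk_2(E_2/K)$ and $w_{A/K}=w_{E_1/K}w_{E_2/K}$, and so the conclusion of the theorem is equivalent to the single identity
\[
(-1)^{\rk_2(A/K)}=w_{A/K}.
\]
The hypothesis translates to $A[2]\cong T\oplus T$ as $G_K$-modules with $T:=E_1[2]$, a highly symmetric situation to be exploited via an explicit isogeny.

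To construct that isogeny, I would use the Galois-equivariant isomorphism $\phi\colon E_1[2]\xrightarrow{\sim}E_2[2]$ to form the $K$-rational subgroup $\Gamma_\phi:=\{(P,\phi(P)):P\in E_1[2]\}\subset A[2]$ of order $4$, and set $\pi\colon A\to B:=A/\Gamma_\phi$. This is a $K$-isogeny of degree $4$, and one expects that, for a suitable choice of Weierstrass models, $B$ is itself isogenous to the Jacobian of an explicit genus-$2$ curve $C/K$ whose $2$-torsion is built from $T$. In this way one lands in the hyperelliptic world, where the cluster-picture machinery visible in the preamble applies directly.

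The next step is a place-by-place analysis. By isogeny invariance of $2^\infty$-Selmer parity modulo local error terms, the ratio $(-1)^{\rk_2(A/K)}/w_{A/K}$ is expressed as a product over places $v$ of $K$ of local contributions comparing the Cassels--Schaefer/Tamagawa term coming from $\pi$ with the local root number $w_{E_1/K_v}w_{E_2/K_v}$. At places of good reduction both local factors are trivial; at places of residue characteristic different from $2$, the cluster-picture description, together with the local error theorem referenced in the preamble, expresses each side in terms of the reduction data of $E_1$ and $E_2$, and the hypothesis $E_1[2]\cong E_2[2]$ should force the two to agree. The product formula for root numbers would then close the global argument.

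The main obstacle I would expect to tackle is the analysis at places $v$ of residue characteristic~$2$, where the action of $G_{K_v}$ on $T$ can be wildly ramified and cluster pictures do not directly apply. There one must work with $H^1(K_v,T)$, local Tate duality, and the images of $E_i(K_v)/2E_i(K_v)$ by hand. The role of the local error theorem should be precisely to give a uniform bound, depending only on $T|_{G_{K_v}}$ and hence common to $E_1$ and $E_2$, on the wild discrepancy between the Selmer-side and root-number-side local factors; verifying this uniformity and matching the residual parities is where I expect the bulk of the technical work to lie.
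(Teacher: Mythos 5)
Your skeleton is the right one and coincides with the paper's: the biconditional is equivalent to the $2$-parity statement for $E_1\times E_2$, the engine is an isogeny whose kernel is the graph of the Galois isomorphism $E_1[2]\to E_2[2]$ landing in a genus-$2$ Jacobian, and the conclusion comes from a place-by-place comparison closed by a product formula. But two genuine gaps remain. First, you never produce the local error term, and that is where essentially all of the work lies. To even write it down one must first normalize the pair: the paper shows (Lemma \ref{lem:2tors}, via an explicit M\"obius transformation defined over $K$ because Galois permutes the two root sets compatibly) that up to quadratic twist one may take $E_1\cong y^2=f(x)$ and $E_2\cong \Jac(y^2=xf(x))$ for a monic cubic $f$ with $f(0)\neq 0$; only then is the quotient the explicit Jacobian of $y^2=f(x^2)$ (Cassels--Flynn), and only then can the discrepancy be written as a concrete product of Hilbert symbols in the coefficients of $f$, namely $(b,-c)_{\mathcal K}(-2L,\Delta_f)_{\mathcal K}(L,-b)_{\mathcal K}$ with $L=ab-9c$. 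The degenerate case of this normalization ($E_2$ a quadratic twist of $E_1$) must be handled separately by Kramer--Tunnell. Note also that your expectation that the hypothesis ``forces the two local factors to agree'' is false even at odd semistable places: they genuinely disagree at an even number of places, which is exactly why the error term has to be a Hilbert-symbol expression governed by the product formula rather than something identically trivial.

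Second, and more seriously, your plan at residue characteristic $2$ --- working with $H^1(K_v,T)$, local Tate duality and images of local points by hand --- is not how the argument can be closed and is precisely what the paper avoids. No direct computation of local root numbers, N\'eron exterior forms or deficiency at $v\mid 2$ is attempted. Instead the local identity over a $2$-adic field $\mathcal K$ is deduced globally: one realizes $\mathcal K\cong F_{\mathfrak p}$ for a totally real $F$, approximates $f$ by a cubic over $F$ arranged to give non-integral $j$-invariants at an auxiliary prime (so that the $2$-parity conjecture is already known globally for the approximating curves by Dokchitser--Dokchitser and Nekov\'a\v r), verifies the local identity at every place of $F$ other than $\mathfrak p$ using the archimedean and odd semistable computations together with a continuity argument, and then extracts the identity at $\mathfrak p$ from the global parity statement and the Hilbert product formula. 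The same device is needed at odd places of additive reduction or where $xf(x)$ reduces too degenerately for the cluster tables. Without this global-to-local step, or a substitute for it, your argument does not close at $p=2$.
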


Combining Theorem \ref{maintheorem}  with the known results for the 2-parity conjecture for non-CM elliptic curves over totally real fields mentioned above, we prove the 2-parity conjecture for all CM curves over totally real fields.  

\begin{theorem}[Theorem \ref{2PCtotallyreal}] Let $E/K$ be an elliptic curve over a totally real number field with complex multiplication. The $2$-parity conjecture holds for $E/K$. \end{theorem}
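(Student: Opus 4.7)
The plan is to reduce the CM case to the known non-CM case via Theorem~\ref{maintheorem}. Given a CM elliptic curve $E/K$ over a totally real field, I would construct a non-CM elliptic curve $E'/K$ with $E[2] \cong E'[2]$ as Galois modules; the $2$-parity conjecture for $E'/K$ holds by the cited works of Nekov\'a\v{r} and others on non-CM elliptic curves over totally real fields, and it transfers to $E/K$ via Theorem~\ref{maintheorem}.

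For the construction of $E'$, the essential point is that elliptic curves over $K$ sharing a prescribed $2$-torsion Galois module are parameterized by a twist of the modular curve $Y(2)$, a rational curve. Since $E$ itself provides a $K$-rational point on the relevant twist, this twist is birational to $\mathbb{P}^1_K$ and carries infinitely many $K$-rational points. The $j$-invariant is a non-constant function along this family, so it attains infinitely many values on $K$-rational points. Only finitely many CM $j$-invariants can lie in the number field $K$, since $[\mathbb{Q}(j(\mathcal{O})):\mathbb{Q}] = h(\mathcal{O}) \to \infty$ as the CM order $\mathcal{O}$ varies. Hence infinitely many members of the family are non-CM, producing the desired $E'$. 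Concretely, writing $E$ in the form $y^2 = f(x)$ with $f \in K[x]$ cubic of roots $e_i \in \bar K$, one can vary $f$ through cubics whose roots admit the same Galois action as $(e_1,e_2,e_3)$ (for instance via $K$-rational changes of coordinate applied to the roots) to obtain explicit non-CM candidates.

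The main obstacle is the construction step: ensuring the existence of a non-CM $E'/K$ whose $2$-torsion is isomorphic to $E[2]$ as a Galois module, not merely as an abstract group with compatible action on the $2$-division field. This is handled by the rational-curve structure of the twisted $Y(2)$ and the sparsity of CM $j$-invariants in $K$, but it requires care in rigorously setting up the moduli problem and verifying that non-CM $K$-rational points exist. Once $E'$ is in hand, the remainder of the argument is a direct invocation of the known $2$-parity conjecture for non-CM elliptic curves over totally real fields applied to $E'/K$, followed by Theorem~\ref{maintheorem} to conclude for $E/K$.
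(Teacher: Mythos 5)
Your proposal is correct and carries out the same overall reduction as the paper: produce a non-CM elliptic curve over $K$ whose $2$-torsion is isomorphic to $E[2]$ as a Galois module, invoke the known $2$-parity results for non-CM curves over totally real fields, and transfer back to $E/K$ via Theorem \ref{thm:2tors}. The two arguments differ only in how the auxiliary curve is produced, and this is the one genuinely non-trivial step. You argue moduli-theoretically: the twist of $Y(2)$ determined by the cocycle of $E[2]$ is a rational curve with a $K$-point, hence has infinitely many $K$-points realising infinitely many $j$-invariants, of which only finitely many can be CM since CM $j$-invariants of bounded degree over $\Q$ are finite in number. This is sound (the coarse-moduli ambiguity of the $\lambda$-line is only a quadratic-twist ambiguity, which changes neither $E'[2]$ nor the truth of the $2$-parity conjecture by \cite[Corollary 1.6]{dokchitser2011root}), but it asks for a careful set-up of the twisted moduli problem and some input on CM $j$-invariants. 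The paper instead writes down the explicit family $E_\gamma=\Jac(y^2=xf(x-\gamma))$, whose Weierstrass roots are $-(\alpha_i+\gamma)(\alpha_j+\gamma)$ so that $E_\gamma[2]\cong E[2]$ by inspection, notes that $j(E_\gamma)$ is a non-constant rational function of $\gamma$, and chooses $\gamma_0$ with $j(E_{\gamma_0})\notin\mathcal O_K$; non-integrality of the $j$-invariant rules out CM at once and allows a direct appeal to \cite[Theorem 2.4]{dokchitser2011root}. Your route is more conceptual and shows that non-CM members are generic; the paper's is more elementary and entirely explicit. One caution on your concrete fallback: merely translating the variable in $f$ does not change the curve, so to realise ``varying $f$ through cubics with the same Galois action on roots'' you need a genuine M\"obius transformation of the roots (or the paper's Jacobian construction) to actually move the $j$-invariant.
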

In turn, we complete the proof of the $p$-parity conjecture for elliptic curves over totally real fields. 
\begin{theorem}[Corollary \ref{maincorollary}]
Let $p$ be a prime and $K$ be a totally real field. The $p$-parity conjecture holds for all elliptic curves over $K$.
\end{theorem}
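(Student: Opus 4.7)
The plan is to deduce this corollary by case analysis combining Theorem \ref{maintheorem} with previously known results.

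For every odd prime $p$, the $p$-parity conjecture for elliptic curves over totally real number fields is already known by Nekov\'a\v{r}, so only $p=2$ requires attention. For $p=2$, Nekov\'a\v{r}'s results likewise establish the conjecture for all non-CM elliptic curves over totally real fields. The only remaining case is therefore that of CM elliptic curves over totally real fields, which is the content of Theorem \ref{2PCtotallyreal}. Consequently, once Theorem \ref{2PCtotallyreal} is proved, the corollary follows immediately by concatenating these three inputs.

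To prove Theorem \ref{2PCtotallyreal}, I would apply Theorem \ref{maintheorem}. Given a CM elliptic curve $E/K$ over a totally real field, it suffices to produce a non-CM elliptic curve $E'/K$ with $E[2]\cong E'[2]$ as $\Gal(\bar K/K)$-modules: by the non-CM case above, the $2$-parity conjecture is already known for $E'/K$, and Theorem \ref{maintheorem} then transports it to $E/K$.

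The crucial step, and the main obstacle, is constructing the twin $E'$. Writing $M=E[2]$, pairs $(E',\phi)$ consisting of an elliptic curve $E'/K$ together with a $\Gal(\bar K/K)$-equivariant isomorphism $\phi\colon E'[2]\xrightarrow{\sim} M$ are parameterized by $K$-points of a twist $Y_M$ of the modular curve $Y(2)$. Since $(E,\mathrm{id})$ itself defines a $K$-point of $Y_M$, this twist is trivial, i.e.\ $Y_M$ is isomorphic to $\mathbb{A}^1_K$ minus two points, so $Y_M(K)$ is infinite. The CM $K$-points of $Y_M$ project to CM $j$-invariants lying in $K$, and for a fixed number field $K$ only finitely many CM $j$-invariants lie in $K$ (the class number of the associated CM order is bounded by $[K:\Q]$, and by Heilbronn only finitely many orders have class number below any given bound); since the map $Y_M\to Y(1)$ has finite fibres, only finitely many $K$-points of $Y_M$ are CM. Selecting any non-CM $K$-point of $Y_M$ then yields the required $E'$, completing the proof.
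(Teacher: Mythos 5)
Your reduction of the corollary itself is exactly the paper's: odd $p$ is Nekov\'a\v{r}, $p=2$ non-CM is \cite[Theorem 2.4]{dokchitser2011root}, and $p=2$ with CM is Theorem \ref{2PCtotallyreal}. Where you genuinely diverge is in the proof of Theorem \ref{2PCtotallyreal}. The paper produces the auxiliary non-CM curve completely explicitly: it sets $f_\gamma(x)=f(x-\gamma)$ and $E_\gamma=\Jac(y^2=xf_\gamma(x))$, notes $E[2]\cong E_\gamma[2]$, and observes that $j(E_\gamma)$ is a non-constant rational function of $\gamma$, so some $\gamma_0$ gives $j(E_{\gamma_0})\notin\mathcal O_K$ --- which certifies both that $E_{\gamma_0}$ is non-CM and (directly) that \cite[Theorem 2.4]{dokchitser2011root} applies. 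You instead argue via the moduli space $Y_M$ of pairs $(E',\phi)$ with $\phi\colon E'[2]\xrightarrow{\sim}M$, show it has infinitely many $K$-points, and discard the finitely many CM points using Heilbronn's theorem and the finiteness of the fibres of $Y_M\to Y(1)$. Both arguments are sound; yours is more conceptual and makes clear that \emph{any} non-CM member of the family works, while the paper's is more elementary (no moduli-theoretic or class-number input) and hands you the non-integrality of the $j$-invariant for free, which is the hypothesis the paper actually quotes from \cite{dokchitser2011root} at that step.

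One small inaccuracy worth fixing: the existence of the $K$-point $(E,\mathrm{id})$ does \emph{not} make the twist $Y_M$ trivial as a curve with level structure, so $Y_M$ need not be $\mathbb A^1_K$ minus two $K$-rational points. The twisting cocycle takes values in $S_3$ permuting the three cusps of $Y(2)$, so $Y_M$ is $\mathbb P^1_K$ minus a degree-$3$ closed subscheme (concretely, the roots of $f$, which may be irreducible over $K$); the $K$-point only guarantees the ambient conic is $\mathbb P^1$. This does not damage your argument, since $\mathbb P^1_K$ minus a zero-dimensional closed subscheme still has infinitely many $K$-points, but the claim as stated is false when $f$ has no root in $K$.
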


\subsection{Setup}
The condition on the 2-torsion groups in Theorem \ref{maintheorem} provides us with a simple setup which we will use throughout this article.  

\begin{definition}\label{EE'}
Let $f(x)$ be a separable monic cubic polynomial such that $f(0) \ne 0$. We let 
$$
E: y^2 = f(x), \quad E': y^2 = xf(x).
$$ 
\end{definition}

Thanks to Lemma \ref{lem:2tors}, it is enough to work with $E$ and $E'$ to recover Theorem \ref{maintheorem} (see Corollary 1.6 in \cite{dokchitser2011root} for results on $2$-parity conjecture for elliptic curves over quadratic extensions). We therefore prove the following.

\begin{theorem}[Theorem \ref{thm:2PCforEC}]\label{secondmain}Let $K$ be a number field. The $2$-parity conjecture holds for $E/K$ if and only if it holds for $\Jac E'/K$.\end{theorem}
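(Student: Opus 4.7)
The plan is to prove the 2-parity conjecture for the abelian surface $A = E \times \Jac E'$, since multiplicativity of global root numbers and additivity of 2-Selmer ranks make $(-1)^{\rk_2(E) + \rk_2(\Jac E')} = w_E \cdot w_{\Jac E'}$ equivalent to the biconditional in the statement. The key geometric input is that the canonical isomorphism $E[2] \cong \Jac E'[2]$ as Galois modules (both being the set of roots of $f$) yields a rational diagonal 2-torsion subgroup $\Delta \subset A[2]$, and hence a $K$-rational 2-isogeny $\phi : A \to A/\Delta$. Moreover $A/\Delta$ is $K$-isogenous to $\Jac C$ for the genus 2 hyperelliptic curve $C : y^2 = f(x^2)$: the involutions $(x,y) \mapsto (-x, y)$ and $(x,y) \mapsto (-x, -y)$ of $C$ exhibit $E$ and $\Jac E'$ as the two non-hyperelliptic $\Z/2$-quotients of $C$, so functoriality of the Jacobian produces a $(2,2)$-isogeny $\Jac C \sim E \times \Jac E'$.

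With this geometric picture in place, I would run a $\phi$-descent along $\phi : A \to A/\Delta$. By Poitou--Tate duality in the Schaefer--Stoll framework, this expresses $\rk_2(A) - \rk_2(\Jac C) \pmod{2}$ as a sum of explicit local invariants $\epsilon_v$ depending only on the reduction of $f$ at $v$. In parallel, the Deligne--Rohrlich product formula together with the isogeny invariance $w_A = w_{\Jac C}$ gives a corresponding decomposition of the root number ratio $w_E \cdot w_{\Jac E'}/w_{\Jac C}$ as a product of local factors $\eta_v$. The problem therefore reduces to (i) the 2-parity conjecture for $\Jac C$ and (ii) the local identity $\eta_v = (-1)^{\epsilon_v}$ at each place $v$ of $K$.

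The hard part will be the local comparison in (ii), which I would organise via the cluster picture of $f$ over $K_v$: this combinatorial datum encodes the reduction types, Tamagawa numbers, component groups, and local root numbers of $E$, $\Jac E'$, and $\Jac C$ simultaneously, so the identity becomes a finite case analysis across the possible cluster configurations. For semistable places of odd residue characteristic this is a clean combinatorial check, and additive reduction can be handled by passing to a semistabilising quadratic extension. The most delicate case will be the places above $2$, where wild ramification contributes to root numbers in a way not fully captured by the cluster picture; here I expect to need supplementary 2-adic Galois-cohomological input, perhaps via Kramer-type conductor formulas tailored to the shared 2-torsion module of $E$ and $\Jac E'$. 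Step (i) I anticipate can be handled either by invoking Morgan's 2-parity theorem for hyperelliptic Jacobians applied to $\Jac C$, or by a direct Dokchitser-style argument exploiting the $(2,2)$-isogeny structure.
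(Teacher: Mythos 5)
Your geometric setup (the $(2,2)$-isogeny $\phi:E\times\Jac E'\to\Jac C$ and the descent along it) matches the paper's, but the logical architecture has a genuine gap: your step (i), the $2$-parity conjecture for $\Jac C$, is not an available input --- it is \emph{equivalent} to the statement you are trying to prove. Since $2^\infty$-Selmer ranks and global root numbers are isogeny invariants, $2$-parity for $\Jac C$ is the same assertion as $2$-parity for $E\times\Jac E'$; indeed the paper deduces $2$-parity for $\Jac C$ (Theorem \ref{thm:2PCsplit}) as a \emph{corollary} of Theorem \ref{thm:2PCforEC}, not the reverse. Morgan's theorem does not apply here, because $C:y^2=f(x^2)$ over $K$ is not in general the base change of a hyperelliptic curve from an index-$2$ subfield. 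The paper avoids this circularity by never invoking the rank or root number of $\Jac C$ globally: the isogeny identity $\phi\phi^t=[2]$ lets one express $(-1)^{\rk_2 E+\rk_2\Jac E'}$ \emph{itself} as a product of local terms $\lambda_{f,K_v}$ (Theorem \ref{thm:paritythm}), in which $\Jac C$ enters only through local data (Tamagawa numbers, deficiency). Relatedly, your quantity $\rk_2(A)-\rk_2(\Jac C)\pmod 2$ is identically zero and carries no information; what the descent actually controls is $\rk_2(A)\pmod 2$ directly.

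The second gap is your expectation of an exact local identity $\eta_v=(-1)^{\epsilon_v}$ at every place. This fails: $\lambda_{f,K_v}$ and $w_{E/K_v}w_{\Jac E'/K_v}$ genuinely disagree at some places, and the core difficulty of the proof is to exhibit a correction term $H_{f,K_v}$ --- here a specific product of Hilbert symbols in the coefficients of $f$ (Definition \ref{de:errorterm}) --- satisfying $\prod_v H_{f,K_v}=1$ and $w_{E/K_v}w_{\Jac E'/K_v}=\lambda_{f,K_v}H_{f,K_v}$ for all $v$. There is no known conceptual recipe for this term; finding it is the ad hoc heart of every proof of this type, and your proposal does not anticipate needing it. Finally, at places above $2$ (and at additive odd places) the paper does not attempt a direct local computation with conductor formulas; instead it runs a global-to-local deformation argument, approximating $f$ by a polynomial over a totally real field chosen so that the local identity is already known at every other place and the global $2$-parity statement is known by Nekov\'a\v r and Dokchitser--Dokchitser, then isolating the remaining place by the Hilbert symbol product formula. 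Without some such mechanism your plan for the wild places is unlikely to close.
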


Lemma \ref{lem:isogeny} also guarantees the existence of an isogeny $\phi : E \times \Jac E' \to \Jac C$, where $C:y^2=f(x^2)$. It follows that Theorem \ref{secondmain} yields
 \begin{theorem}[Theorem \ref{thm:2PCsplit}]\label{thirdmain} Let $K$ be a number field, $c\in K^\times$ and $f(x)\in K[x]$ be a separable monic cubic polynomial such that $f(0) \ne 0$. The $2$-parity conjecture holds for $\Jac C/K$ where $C: y^2 = cf(x^2)$ is a genus $2$ hyperelliptic curve.
\end{theorem}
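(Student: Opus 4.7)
The plan is to combine an isogeny decomposition of $\Jac C$ with Theorem~\ref{maintheorem}, using that quadratic twists of elliptic curves preserve the $2$-torsion as a Galois module.

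Write $E : y^2 = f(x)$ and $E' : y^2 = xf(x)$ as in Definition~\ref{EE'}, and let $E^{(c)}$ and $(E')^{(c)}$ denote the quadratic twists by $c$, so that $E^{(c)} : y^2 = cf(x)$ and $(E')^{(c)} : y^2 = cxf(x)$. The first step is to apply the quotient construction of Lemma~\ref{lem:isogeny} to $C : y^2 = cf(x^2)$: the commuting involutions $(x,y) \mapsto (-x, y)$ and $(x,y) \mapsto (-x, -y)$ on $C$ admit quotients isomorphic to $E^{(c)}$ and $(E')^{(c)}$ respectively, via the invariants $(u,v) = (x^2, y)$ and $(u, v) = (x^2, xy)$. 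This produces an isogeny over $K$
$$
E^{(c)} \times \Jac (E')^{(c)} \longrightarrow \Jac C.
$$
Since isogenous abelian varieties share global root numbers and $2^{\infty}$-Selmer ranks, the 2-parity conjecture for $\Jac C/K$ becomes equivalent to the statement that 2-parity holds for $E^{(c)}/K$ if and only if it holds for $\Jac (E')^{(c)}/K$.

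The next step is to verify this equivalence via Theorem~\ref{maintheorem}. The 2-torsion of an elliptic curve and any of its quadratic twists coincide as $G_K$-modules (both consist of the three points at $y = 0$, corresponding to the roots of the same cubic), so $E^{(c)}[2] \cong E[2]$ and $\Jac (E')^{(c)}[2] \cong \Jac E'[2]$. By Lemma~\ref{lem:2tors}, $E[2] \cong \Jac E'[2]$ as Galois modules, hence $E^{(c)}[2] \cong \Jac (E')^{(c)}[2]$. Theorem~\ref{maintheorem} then delivers the desired equivalence of 2-parity conjectures for $E^{(c)}/K$ and $\Jac (E')^{(c)}/K$, which by the previous paragraph completes the proof.

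The only substantive ingredient is the construction of the twisted isogeny above, which is an immediate translation of Lemma~\ref{lem:isogeny}; the remainder reduces to chasing equivalences. I do not expect any additional obstacle, since all of the hard analytic content has been absorbed into Theorem~\ref{secondmain} (and hence into Theorem~\ref{maintheorem}).
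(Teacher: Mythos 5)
Your proposal is correct and follows essentially the same route as the paper: the same isogeny $E^{(c)}\times \Jac(E')^{(c)}\to \Jac C$ obtained by adapting Lemma \ref{lem:isogeny}, followed by a reduction to the untwisted pair. The only cosmetic difference is the endgame: the paper strips the twist using the quadratic-twist $2$-parity result of Dokchitser--Dokchitser and then applies Theorem \ref{thm:2PCforEC} directly, whereas you invoke Theorem \ref{thm:2tors} on the twisted pair (not circular, since Theorem \ref{thm:2tors} does not depend on the present statement and its proof performs exactly the same twist-stripping). One small correction: Lemma \ref{lem:2tors} does not assert that $E[2]\cong \Jac E'[2]$ --- that fact is instead immediate from the model of $\Jac E'$ in Remark \ref{re:Jacobian}, since Galois permutes the roots $-\alpha_i\alpha_j$ compatibly with the $\alpha_k$.
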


\begin{remark}Since no condition is imposed on the Galois group of the polynomial defining $C$, Theorem \ref{thirdmain} generalises Theorem 11.2 in \cite{parityforab} in the case of Jacobians of genus 2 curves which are isogenous to a product of elliptic curves. 
\end{remark}

As mentioned before, assuming finiteness of the Shafarevich-Tate group allows us to convert our results on the $2$-parity conjecture into results on the parity conjecture. 

\begin{corollary}
Let $K$ be a number field.

\begin{enumerate}
\item If $E_1[2]\cong E_2[2]$ as Galois modules and both $\Sha_{E_1/K}[2^{\infty}]$ and $\Sha_{E_2/K}[2^{\infty}]$ are finite then the parity conjecture holds for $E_1/K$ if and only if it holds for $E_2/K$.
\item If $\Sha_{E/K}[2^{\infty}]$ and $\Sha_{\Jac E'/K}[2^{\infty}]$ are finite then the parity conjecture holds for $E/K$ if and only if it holds for $\Jac E'/K$.
\item If $\Sha_{\Jac C/K}[2^{\infty}]$ is finite then the parity conjecture holds for $\Jac C/K$.

\end{enumerate}

\end{corollary}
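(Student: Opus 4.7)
The plan is to derive each of the three statements directly from the corresponding $2$-parity results (Theorem \ref{maintheorem}, Theorem \ref{secondmain}, Theorem \ref{thirdmain}) by reducing the parity conjecture to the $2$-parity conjecture under the finiteness hypothesis on the $2$-primary part of the Shafarevich-Tate group.

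The key observation is the following simple dictionary. For an abelian variety $A/K$, write $\Sha_{A/K}[2^\infty] \cong (\Q_2/\Z_2)^{\delta_2} \times \Delta$ with $\Delta$ finite; then by definition $\rk_2(A/K) = \rk(A/K) + \delta_2$. In particular, if $\Sha_{A/K}[2^\infty]$ is finite, then $\delta_2 = 0$ and therefore $\rk_2(A/K) = \rk(A/K)$. Consequently, under such a finiteness assumption, the $2$-parity conjecture for $A/K$ is logically equivalent to the parity conjecture for $A/K$.

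Applying this to each of the three cases: for (1), the finiteness of $\Sha_{E_i/K}[2^\infty]$ for $i=1,2$ converts $2$-parity into parity for both curves, and Theorem \ref{maintheorem} asserts equivalence of the $2$-parity statements under the Galois-module isomorphism $E_1[2] \cong E_2[2]$. For (2), we apply the same equivalence to $E/K$ and to $\Jac E'/K$ under their respective finiteness hypotheses and invoke Theorem \ref{secondmain}. For (3), finiteness of $\Sha_{\Jac C/K}[2^\infty]$ reduces the parity conjecture for $\Jac C/K$ to the $2$-parity conjecture for $\Jac C/K$, which is exactly Theorem \ref{thirdmain}.

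There is no genuine obstacle here: the corollary is a formal consequence of the three theorems combined with the definition of $\rk_2$. The only care needed is to make sure the finiteness hypothesis is invoked for each abelian variety involved in the comparison (in (1) and (2) for \emph{both} curves, so that the $2$-parity $\Leftrightarrow$ parity equivalence can be applied on both sides of the biconditional), which the statement of the corollary indeed provides.
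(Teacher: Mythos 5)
Your proposal is correct and is exactly the argument the paper intends (the paper states the corollary without written proof, relying on the observation from the introduction that finiteness of $\Sha_{A/K}[2^{\infty}]$ forces $\delta_2=0$, hence $\rk_2(A/K)=\rk(A/K)$, so that the $2$-parity conjecture and the parity conjecture coincide for that variety). Combining this dictionary with Theorems \ref{maintheorem}, \ref{secondmain} and \ref{thirdmain} respectively, applied to each abelian variety appearing in the relevant biconditional, is precisely what is needed.
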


\subsection{Approach}

Our approach relies on the existence of $\phi $ (Lemma \ref{lem:isogeny}) which we use to express the parity of the $2^{\infty}$-Selmer rank of $E \times \Jac E'$ as a product of local terms running over all places of $K$. 
\begin{definition}\label{de:localterm} Let $\mathcal K$ be a local field 
 . We write
$$
\lambda_{f,\mathcal K} = \mu_{C/\mathcal K}\cdot (-1)^{\dim_{\F_2}\ker\phi|_{\mathcal K} - \dim_{\F_2}\coker\phi|_{\mathcal K}},
$$
where $\mu_{C/ \mathcal K}$ is $-1$ if $C$ is deficient over $\mathcal K$ (as defined in \cite[\S 8]{Poonen}) and $1$ otherwise.\\
Here $\ker\phi|_{\mathcal K} = (E\times\Jac E')(\mathcal K)[\phi]$ and $\coker\phi|_{\mathcal K} = |\Jac C(\mathcal K)/\phi((E\times\Jac E')(\mathcal K))|$.
\end{definition}

\begin{theorem}[Theorem \ref{thm:paritythm}]
Let $K$ be a number field.  
Then
$$
  (-1)^{\rk_2 E/K+\rk_2 \Jac E'/K}= \prod_v \lambda_{f,K_v},
$$
where the product runs over all places of $K$.
\end{theorem}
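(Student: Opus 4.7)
The plan is to use the isogeny $\phi : E \times \Jac E' \to \Jac C$ from Lemma \ref{lem:isogeny} together with two classical ingredients: an isogeny-type formula for $2$-Selmer ranks, and the Poonen--Stoll theorem for hyperelliptic Jacobians.

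First I would apply a Cassels--Schaefer--Stoll type product formula to $\phi : A \to B$, where $A = E \times \Jac E'$ and $B = \Jac C$. The classical formula controls the ratio
$|\mathrm{Sel}_\phi(A/K)| \cdot |B(K)[\hat\phi]| \big/ |\mathrm{Sel}_{\hat\phi}(B/K)| \cdot |A(K)[\phi]|$
by a product of local defects of $\phi$ at each place of $K$. Combining this with the analogous formula for $\hat\phi$ (so that the global $2$-torsion contributions cancel) and extracting $\F_2$-dimensions to pass from the $\phi$-Selmer groups to the $2^\infty$-Selmer coranks, the expected output is
$$
(-1)^{\rk_2 A/K + \rk_2 B/K} \;=\; \prod_v (-1)^{\dim_{\F_2}\ker \phi|_{K_v} - \dim_{\F_2}\coker \phi|_{K_v}}.
$$
At all but finitely many places the local kernel and cokernel dimensions agree, so the product is finite.

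Next I would invoke the Poonen--Stoll theorem on the Cassels--Tate pairing on $\Sha(\Jac C/K)[2]$: since $C$ is a hyperelliptic curve over $K$,
$$
(-1)^{\rk_2 \Jac C/K} \;=\; \prod_v \mu_{C/K_v},
$$
with $\mu_{C/K_v} = -1$ exactly when $C/K_v$ is deficient. Multiplying this with the previous identity and using $\rk_2 A/K = \rk_2 E/K + \rk_2 \Jac E'/K$, the definition of $\lambda_{f,K_v}$ (Definition \ref{de:localterm}) gives
$$
(-1)^{\rk_2 E/K + \rk_2 \Jac E'/K} \;=\; \prod_v \mu_{C/K_v}\cdot (-1)^{\dim_{\F_2}\ker\phi|_{K_v} - \dim_{\F_2}\coker\phi|_{K_v}} \;=\; \prod_v \lambda_{f,K_v},
$$
as claimed.

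The main obstacle is pinning down the isogeny formula above in exactly the stated form. The Cassels--Schaefer--Stoll output is naturally about $\phi$- and $\hat\phi$-Selmer groups, whereas $\rk_2$ is the $\Z_2$-corank of $\mathrm{Sel}_{2^\infty}$. Converting one to the other involves careful bookkeeping of global $2$-torsion on $A$ and $B$ and of the divisible parts of the Shafarevich--Tate groups, and verifying that these cancel globally. Archimedean places must be treated separately. The Poonen--Stoll input is used essentially as a black box, but one should confirm that their notion of deficiency matches the one used in Definition \ref{de:localterm}.
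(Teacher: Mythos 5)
The paper's own proof of Theorem \ref{thm:paritythm} is a one-line citation: it is exactly \cite{parityforab}, Theorem 3.2, applied with $A = E\times\Jac E'$ and $A' = \Jac C$. Your proposal is in effect an attempt to re-derive that general result, and the two ingredients you name (a Cassels--Schaefer--Stoll isogeny formula and Poonen--Stoll) are indeed the ones underlying it. However, both of your displayed intermediate identities are false as stated, and the gap is not merely the ``bookkeeping'' you flag at the end. First, Poonen--Stoll does not say $(-1)^{\rk_2 \Jac C/K} = \prod_v \mu_{C/K_v}$: their theorem controls $\dim_{\F_2}\Sha_{\mathrm{nd}}(\Jac C/K)[2]$ modulo $2$ (equivalently, whether the non-divisible part of $\Sha$ has square order) in terms of the set of deficient places; it says nothing about the Mordell--Weil rank or the divisible part of $\Sha$. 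An elliptic curve of odd rank with trivial $\Sha$ is deficient nowhere, so the principle you invoke, applied to a genus-one hyperelliptic curve, is already false. Second, your first identity $(-1)^{\rk_2 A + \rk_2 B} = \prod_v(-1)^{\dim_{\F_2}\ker\phi|_{K_v} - \dim_{\F_2}\coker\phi|_{K_v}}$ cannot hold in general either: the Cassels-type computation naturally outputs the parity of $\dim_{\F_2}\mathrm{Sel}_2$ plus global $2$-torsion, and converting this to $\rk_2$ costs exactly $\dim_{\F_2}\Sha_{\mathrm{nd}}[2]$ for each of $A$ and $B$ --- which is precisely where the deficiency factors must enter. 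The correct general statement (compare Theorem \ref{thm:paritythm2} and Definition \ref{de:localterm2} in the appendix) is
$(-1)^{\rk_2 A/K + \rk_2 B/K} = \prod_v \mu_{A,v}\,\mu_{B,v}\,(-1)^{\dim_{\F_2}\ker\phi|_{K_v} - \dim_{\F_2}\coker\phi|_{K_v}}$,
with a deficiency factor for \emph{every} curve whose Jacobian appears on either side of the isogeny.

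In the present situation your two errors cancel: $E$ and $E'$ both have rational points everywhere locally, so the source contributes no deficiency, and multiplying your two incorrect identities yields $(-1)^{\rk_2 A/K} = \prod_v\mu_{C,v}(-1)^{\dim_{\F_2}\ker\phi|_{K_v} - \dim_{\F_2}\coker\phi|_{K_v}}$, which happens to be the correct conclusion. But neither displayed step is a theorem you can cite, and the argument as written would give a wrong answer in the more general setting of the appendix, where $\mu_{C_1,v}\mu_{C_2,v}$ is not identically $1$. Either invoke \cite{parityforab}, Theorem 3.2 directly, as the paper does, or restate your two inputs at the level of $\mathrm{Sel}_2$ and $\Sha_{\mathrm{nd}}[2]$ and track the global torsion and deficiency terms through explicitly.
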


Using this expression for the parity of the $2^{\infty}$-Selmer rank of $E \times \Jac E'$, we prove that the $2$-parity conjecture holds for $E \times \Jac E'$ by comparing $\lambda_{f,K_v}$ and $w_{E/K_v}w_{\Jac E'/K_v}$ place by place and showing that 
$$
\prod_v \lambda_{f,K_v} = \prod_v w_{E/K_v}w_{\Jac E'/K_v}, 
$$
in turn proving that 
$$
(-1)^{\rk_2 E/K+\rk_2 \Jac E'/K} = w_{E/K}w_{\Jac E'/K}.
$$

The point of this approach is that one can express each $\lambda_{f,K_v}$ in terms of local arithmetic invariants of $E, \Jac E'$ and $\Jac C$ (Lemmas \ref{lem:kercoker} and \ref{le:comps}). In most cases (when $v$ is an infinite or an odd place with semistable reduction), $\lambda_{f,K_v}$ and the local root numbers $w_{E/K_v}w_{\Jac E'/K_v}$ can be computed and compared locally. This leads to the most tricky part of the proof. It turns out that in general, $\lambda_{f,K_v}$ and $w_{E/K_v}w_{\Jac E'/K_v}$ do not agree locally. The bulk of the proof is then to find an expression $H_{f,K_v}$ for that local discrepancy satisfying the product formula $\prod_v  H_{f,K_v} =1$ and prove that
\begin{equation}\label{localequality}\tag{$\dagger$}
w_{E/K_v}w_{\Jac E'/K_v}=\lambda_{f,K_v}H_{f,K_v} \quad \text{ for all } v.
\end{equation}
In this article we propose an expression for $H_{f,K_v}$ in the form of a product of Hilbert symbols (Definition \ref{de:errorterm}).
 
Finding the local error term $H_{f,K_v}$ is a common challenge in all instances of the proof of the $p$-parity conjecture listed in Section \ref{s:mainresults}. This is mainly due to the fact that no conceptual understanding of this local discrepancy exists at the moment. In particular, so far, each proof has exhibited a different ad hoc expression for the local error term with no obvious link to one another.

In Appendix \ref{gener} the first author conjectures an expression for a generalisation of the error term defined in this paper for certain hyperelliptic curves, and observe that in the case of elliptic curves, it reduces to the error term presented in \cite{DDparity}. The appendix also explains what prompted our formulation of $H_{f,K_v}$. 

\subsection{Overview}

In Section \ref{s:isogenyparity} we establish the existence of $\phi$, prove a few of its properties, write $(-1)^{\rk_2 E/K+\rk_2 \Jac E'/K}$ as a product of local invariants (Definition \ref{de:localterm}, Lemmas \ref{lem:kercoker} and \ref{le:comps}), and introduce the local discrepancy $H_{f,K_v}$ (Definition \ref{de:errorterm}). Section \ref{s:Archthmproof} proceeds to prove that %$w_{E/K_v}w_{\Jac E'/K_v}~=~\lambda_{f,K_v}H_{f,K_v}$ 
(\ref{localequality}) holds over archimedean fields. This is followed by Section \ref{s:nonArchthmproof} which presents the theory of clusters that are used to prove (\ref{localequality}) over non-archimedean local fields when $f(x)$ is integral, both $E$ and $E'$ have semistable reductions and $xf(x)$ has at most one double root. Section \ref{s:thmproof} then deals with the remaining cases by using a continuity argument in Section \ref{s:continuity}, a few reduction steps and then a global-local argument in Section \ref{s:reduction}. Lastly, Section \ref{s:global} establishes our global results by taking the product of (\ref{localequality}) over all places of $K$ and using the product formula for Hilbert symbols. 

\subsection{Related work}
We note that in a related work presented in \cite{AAS}, the authors compare the $p$-Selmer
rank, global and local root numbers of two elliptic curves with equivalent mod $p$ Galois representations over number fields when the prime $p$ is \emph{odd}.

\subsection{Acknowledgements}
We would like to warmly thank Vladimir Dokchitser and Adam Morgan for helpful discussions and suggestions. We also thank the anonymous referee for suggesting to look into proving Theorem \ref{2PCtotallyreal}. The first author is supported by University College London and the EPSRC studentship grant EP/R513143/1. The second author is supported by a Royal Society Dorothy Hodgkin Fellowship and was supported by the Simons Collaboration on Arithmetic Geometry, Number Theory, and Computation (Simons Foundation grant number 550023) during part of this work.

%%%%%%%%%%%%%%%%%%%%%%%%%%%%%%%%%
\section{Parity of the $2^\infty$-Selmer rank }\label{s:isogenyparity}
%%%%%%%%%%%%%%%%%%%%%%%%%%%%%%%%%

In this section we present how to control the parity of the $2^\infty$-Selmer rank of $E\times \Jac E'$. The results here mimic those of \cite[\S 3]{parityforab}.

\begin{notation}
Let $K$ be a field of characteristic $0$ and let $f(x)\in K[x]$ be a separable monic cubic polynomial with $f(0) \ne 0$. We write
$$
f(x) = (x-\alpha_1)(x-\alpha_2)(x-\alpha_3).
$$
\end{notation}

\begin{remark}\label{re:Jacobian} Since $E':y^2 = xf(x)$ has a rational point, the map $X = -\frac{\alpha_1\alpha_2\alpha_3}{x}$, $Y=\frac{\alpha_1\alpha_2\alpha_3y}{x^2}$ gives the following model for $\Jac E'$. 
$$
\Jac E':y^2 = (x+\alpha_2\alpha_3)(x+\alpha_1\alpha_3)(x+\alpha_1\alpha_2).$$

\end{remark}

\begin{lemma}\label{lem:isogeny} 

Let $K$ be a field of characteristic $0$ and let $f(x)\in K[x]$ be a separable monic cubic polynomial with $f(0)\ne 0$. There is an isogeny $\phi : E\times\Jac E' \rightarrow \Jac C$, where $C: y^2 = f(x^2)$, such that the dual isogeny $\phi^t$ satisfies $\phi\phi^t = [2]$, with 
$$\ker\phi = \{O, \big ((\alpha_i,0), (-\frac{\alpha_1\alpha_2\alpha_3}{\alpha_i},0)\big )\text{ for } i = 1,2,3\}.$$

\end{lemma}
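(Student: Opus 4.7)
My plan is to build $\phi$ explicitly from the two natural double covers of $C$, verify $\phi\phi^t=[2]$ via a standard identity for degree-$2$ covers, and then pin down $\ker\phi$ by an explicit divisor-class calculation on $C$.

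The curve $C:y^2=f(x^2)$ carries three commuting involutions: the hyperelliptic involution $\iota:(x,y)\mapsto(x,-y)$, together with $\sigma_1:(x,y)\mapsto(-x,y)$ and $\sigma_2:(x,y)\mapsto(-x,-y)$, related by $\sigma_1\sigma_2=\iota$. Taking invariants gives degree-$2$ quotient maps
\begin{equation*}
\pi_1\colon C\to E,\ (x,y)\mapsto(x^2,y)
\qquad\text{and}\qquad
\pi_2\colon C\to E',\ (x,y)\mapsto(x^2,xy),
\end{equation*}
the first ramified at $(0,\pm\sqrt{f(0)})$ and the second at the two points at infinity of $C$ (cf.\ Riemann--Hurwitz). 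Identifying $E\cong\Jac E$ and $E'\cong\Jac E'$ via the usual Abel--Jacobi maps (with origin $O_E$ and $(0,0)\in E'$ respectively), I set
\begin{equation*}
\phi\colon E\times\Jac E'\longrightarrow\Jac C,\qquad \phi=\pi_1^*+\pi_2^*,
\end{equation*}
so that the dual is $\phi^t=(\pi_{1*},\pi_{2*})$.

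To see that $\phi\phi^t=[2]$, I compute $\phi\phi^t(D)=\pi_1^*\pi_{1*}D+\pi_2^*\pi_{2*}D$. For a degree-$2$ Galois cover the standard identity $\pi_i^*\pi_{i*}=1+\sigma_i^*$ holds on $\Jac C$, so $\phi\phi^t=2+\sigma_1^*+\sigma_2^*$. Since $\iota^*=-1$ on $\Jac C$ and $\sigma_1\sigma_2=\iota$, I obtain $\sigma_1^*\sigma_2^*=-1$, and since each $\sigma_i^*$ is an involution this gives $\sigma_2^*=-\sigma_1^*$. The cross terms therefore cancel and $\phi\phi^t=[2]$, forcing $\deg\phi=\deg\phi^t=4$ and, in particular, $\phi$ to be an isogeny.

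To identify $\ker\phi$, it suffices to exhibit $4$ points in it. Write $T_i,T_i'$ for the points of $C$ over $x=\pm\sqrt{\alpha_i}$ (both satisfying $y=0$, so Weierstrass), $W_\pm=(0,\pm\sqrt{f(0)})$, and $\infty_\pm$ for the two points at infinity of $C$. Under the isomorphism of Remark \ref{re:Jacobian}, the point $(-\alpha_1\alpha_2\alpha_3/\alpha_i,0)\in\Jac E'$ corresponds to $(\alpha_i,0)\in E'$. A direct computation of the pullback divisors gives
\begin{equation*}
\pi_1^*\bigl((\alpha_i,0)-O_E\bigr)+\pi_2^*\bigl((\alpha_i,0)-(0,0)\bigr)=2T_i+2T_i'-\infty_+-\infty_--W_+-W_-,
\end{equation*}
and this is precisely the divisor on $C$ of the rational function $(x^2-\alpha_i)/x$, so the class is trivial in $\Jac C$. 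Together with the identity this furnishes $4$ elements of $\ker\phi$, and since these close up to a Klein four-group (using that the three $(\alpha_i,0)$ sum to zero on $E$, and similarly on $\Jac E'$) and $|\ker\phi|=\deg\phi=4$, they exhaust it.

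The main obstacle is bookkeeping: keeping straight the two different models of $\Jac E'$ (the one from Remark \ref{re:Jacobian} versus the Abel--Jacobi picture on $E'$), and making sure the sign conventions for $\pi_i^*$ and $\phi^t$ are consistent so that $\phi\phi^t=[2]$ comes out with the correct sign rather than, say, $\phi\phi^t=2+2\sigma_1^*$. Once these identifications are fixed, both the isogeny property and the shape of $\ker\phi$ follow from the single divisor identity above.
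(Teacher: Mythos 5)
Your proof is correct, but it follows a genuinely different route from the paper's. The paper simply invokes Theorem 14.1.1 of Cassels--Flynn (with $G_i=x^2-\alpha_i$) to obtain $\phi$ and the relation $\phi\phi^t=[2]$, and then determines the kernel as $\phi^t(\Jac C[2])$ by evaluating the explicit formula for $\phi^t$ from that reference on a $2$-torsion divisor class such as $\{(\sqrt{\alpha_2},0),(\sqrt{\alpha_3},0)\}$. You instead build everything from scratch: $\phi=\pi_1^*+\pi_2^*$ from the two bielliptic quotients, $\phi\phi^t=[2]$ from $\pi_i^*\pi_{i*}=1+\sigma_i^*$ together with $\sigma_1^*+\sigma_2^*=0$ (since $\sigma_1\sigma_2=\iota$ and $\iota^*=-1$), and membership of $\bigl((\alpha_i,0),(-\tfrac{\alpha_1\alpha_2\alpha_3}{\alpha_i},0)\bigr)$ in the kernel by exhibiting the rational function $(x^2-\alpha_i)/x$ whose divisor is $2T_i+2T_i'-\infty_+-\infty_--W_+-W_-$; I have checked this divisor identity and the closure of your four elements into a Klein four-group, and both are correct. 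What the paper's citation buys is brevity and the explicit coordinate formula for $\phi^t$, which is reused in the kernel computation; what your argument buys is self-containedness, a transparent explanation of where the $[2]$ comes from, and immediate generalisation to $f$ of higher degree --- indeed it is essentially the construction asserted without detail in Lemma \ref{lem:isogeny2} of the appendix. The only points you leave implicit are standard: that $\pi_*$ is dual to $\pi^*$ under the canonical principal polarisations (needed to know your $\phi^t$ really is the dual isogeny), and that the birational identification of $E'$ with the Weierstrass model of $\Jac E'$ in Remark \ref{re:Jacobian} sends $(0,0)$ to the origin and hence agrees with the Abel--Jacobi map based at $(0,0)$; both are easily supplied.
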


\begin{proof}
This is Theorem 14.1.1 in \cite{CasselsFlynn}, where to recover $C:y^2=f(x^2)$ we let $G_i = (x^2-\alpha_i)$ in (ii) for $i=1,2,3$.
In our case the maps $C\to E$ and $C\to \Jac E'$ are given in (14.1.4) and (14.1.5) respectively by
$$
(x,y) \mapsto (x^2, y) \text{ and } (x,y) \mapsto (\frac{-\alpha_1\alpha_2\alpha_3}{x^2}, \frac{\alpha_1\alpha_2\alpha_3}{x^3}).
$$
Since $\ker \phi =\phi^t(\Jac C[2])$ we use the explicit description of the isogeny $\phi^t:\Jac C \to E \times \Jac E'$ given in the footnote of Theorem 14.1.1 to compute $\phi^t(\Jac C[2])$. 

Fix a choice of square root for $\alpha_2$ and $\alpha_3$ and consider $P \in \Jac C[2]$, given by $P = \{(\sqrt{\alpha_2},0), (\sqrt{\alpha_3},0)\}$. Then 
$$
\phi^t(P) = \big ((\alpha_2,0) + (\alpha_3,0) , (-\alpha_1\alpha_3,0) + (-\alpha_1\alpha_2, 0)\big ) =\big( (\alpha_1, 0), (-\alpha_2\alpha_3,0)\big ) \in E \times \Jac E'.$$
The result follows by permuting indices. \end{proof}

\subsection{Parity theorem} The isogeny $\phi$ allows us to express the parity of the $2^{\infty}$-Selmer rank of $E ~\times~ \Jac E'$ as a product of local terms. 

\begin{theorem}\label{thm:paritythm}
Let $K$ be a number field and let $f(x)\in K[x]$ be a separable monic cubic polynomial with $f(0) \ne 0$. Then
$$
  (-1)^{\rk_2 E/K+\rk_2 \Jac E'/K}= \prod_v \lambda_{f,K_v},
$$
where $\lambda_{f,K_v}$ is as Definition \ref{de:localterm} and the product runs over all places of $K$.
\end{theorem}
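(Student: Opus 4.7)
The plan is to follow the template of \cite[\S 3]{parityforab}, using the isogeny $\phi: A \to B$ of Lemma \ref{lem:isogeny}, where $A = E \times \Jac E'$ and $B = \Jac C$, together with its dual $\phi^t$ satisfying $\phi\phi^t = [2]$. Since $\rk_2(A/K) = \rk_2(E/K) + \rk_2(\Jac E'/K)$, the task reduces to computing $(-1)^{\rk_2(A/K)}$ as a product of local contributions.

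First I would perform $(\phi,\phi^t)$-descent. The Greenberg--Wiles / Schaefer--Stoll local--global formula applied to $\phi$, combined with the defining exact sequences
$$0 \to B(K)/\phi A(K) \to S_\phi(A/K) \to \Sha_{A/K}[\phi] \to 0$$
(and the analogous one for $\phi^t$) and the factorisation $\phi\phi^t = \phi^t\phi = [2]$, yields modulo $2$ the identity
$$\rk_2(A/K) + \rk_2(B/K) \equiv \sum_v \bigl(\dim_{\F_2}\ker\phi|_{K_v} - \dim_{\F_2}\coker\phi|_{K_v}\bigr) \pmod{2}.$$
The global torsion and $\Sha$ contributions coming from the descent sequences pair off symmetrically, exploiting that $\ker\phi$ is self-dual as a Galois module under the Weil pairing induced by the principal polarisation on $B$ and the product polarisation on $A$.

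Next, since $B = \Jac C$ is the Jacobian of the genus-$2$ hyperelliptic curve $C$, hence principally polarised, Poonen--Stoll's theorem \cite[\S 8]{Poonen} on the alternating property of the Cassels--Tate pairing for PPAVs gives
$$(-1)^{\rk_2(B/K)} = \prod_v \mu_{C/K_v},$$
where $\mu_{C/K_v} = -1$ exactly when $C$ is deficient at $v$. Substituting this into the previous display and multiplying both sides by $(-1)^{\rk_2(B/K)}$ yields the claimed formula $(-1)^{\rk_2(A/K)} = \prod_v \lambda_{f,K_v}$.

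The main obstacle is the delicate bookkeeping in the first step: one must verify that the local terms produced by Schaefer--Stoll (which \emph{a priori} involve Euler characteristics, Tate local duality identifications, and local $H^0$'s attached to $\ker\phi$) reduce modulo $2$ to exactly $\dim_{\F_2}\ker\phi|_{K_v} - \dim_{\F_2}\coker\phi|_{K_v}$, and that no global parity defect remains beyond that already captured by Poonen--Stoll applied to $B$. This is precisely where the identity $\phi\phi^t = [2]$ (rather than a higher power of $2$) is essential, as it forces $\ker\phi$ to embed into $A[2]$ in such a way that the $\phi$- and $\phi^t$-descents combine cleanly into a $[2]$-descent on $A$.
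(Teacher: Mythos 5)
The paper's own proof is a one-line citation of \cite[Theorem 3.2]{parityforab} applied to $\phi\colon E\times\Jac E'\to\Jac C$, and your overall architecture ($(\phi,\phi^t)$-descent combined with Poonen--Stoll) is indeed the architecture of the proof of that cited result. However, both of your displayed intermediate identities are false as stated, and their errors cancel. For the first: $A=E\times\Jac E'$ and $B=\Jac C$ are isogenous, so $\rk_2(A/K)=\rk_2(B/K)$ (rank and the divisible part of $\Sha$ are isogeny invariants), and your left-hand side is always even; but by Lemma \ref{lem:kercoker} the right-hand side has the parity of $\rk_2(E/K)+\rk_2(\Jac E'/K)$ plus the number of deficient places, so the identity fails whenever that quantity is odd. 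For the second: in this application $C:y^2=f(x^2)$ has two rational points at infinity, hence is never deficient, so $\prod_v\mu_{C/K_v}=1$ identically and your display asserts that $\rk_2(\Jac C/K)$ is always even --- false as soon as $\rk_2(E/K)+\rk_2(\Jac E'/K)$ is odd. More generally, the parity of the $2^\infty$-Selmer rank of a Jacobian is not determined by local data such as deficiency; Poonen--Stoll's theorem says nothing about rank parity.

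The missing idea is where the deficiency factors actually enter. The Greenberg--Wiles/Cassels-type formula for $|S_\phi(A/K)|/|S_{\phi^t}(B/K)|$ expresses $\sum_v\bigl(\dim_{\F_2}\ker\phi|_{K_v}-\dim_{\F_2}\coker\phi|_{K_v}\bigr)$ modulo $2$ as $\rk_2(A/K)+\rk_2(B/K)$ \emph{plus} correction terms $\dim_{\F_2}\Sha_0(A)[2]+\dim_{\F_2}\Sha_0(B)[2]$ (and rational torsion contributions), where $\Sha_0$ denotes the quotient of $\Sha$ by its maximal divisible subgroup. These are precisely the terms you assert ``pair off symmetrically'' --- they do not. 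Poonen--Stoll is invoked to control their parities: for $A$ a product of elliptic curves $\dim_{\F_2}\Sha_0(A)[2]$ is even, while for a Jacobian $B=\Jac C$ it is congruent modulo $2$ to the number of places where $C$ is deficient. That is the true source of the factors $\mu_{C/K_v}$ in $\lambda_{f,K_v}$; it has nothing to do with $(-1)^{\rk_2(B/K)}$. With the correction terms reinstated and the deficiency count attached to $\Sha_0(B)[2]$ rather than to the rank of $B$, your argument becomes the proof of \cite[Theorem 3.2]{parityforab}; as written, two wrong steps conspire to give the correct final formula.
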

\begin{proof}
This follows from \cite{parityforab} Theorem 3.2 with $A =E \times \Jac E'$ and $A' = \Jac C$. 
\end{proof}

%%%%%
\subsection{Kernel/Cokernel on local points}

The local arithmetic data attached to the curves $E, E'$ and $C$ provide us with a practical way to compute the quantity $\lambda_{f,K_v}$.

\begin{notation}
For a curve $\mathcal C/\R$ with Jacobian $A$, we write $n_{\mathcal C/\R}$ for the number of connected components of $\mathcal C(\R)$. We write $A(\R)^\circ$ for the connected component of $A$ containing the identity and $n_{A/\R}~=~|A(\R)/A(\R)^\circ|$ for the number of connected components. 
\end{notation}

For $\mathcal K/\Q_p$ a finite extension, we denote by $c_{A/\mathcal K}$ and $ \omega_{A/\mathcal K}^\circ$ the Tamagawa number and N\'eron exterior form for $A/\mathcal K$.

\begin{lemma}[\cite{parityforab} Lemma 3.4]\label{lem:kercoker} Let $\mathcal K$ be a local field and let $f(x)\in \mathcal K[x]$ be a separable monic cubic polynomial with $f(0) \ne 0$.
Let $\phi$ be as in Lemma \ref{lem:isogeny}. Then
$$
\frac{|\ker\phi|_{\mathcal K}|}{|\mathrm{coker}\phi|_{\mathcal K}|} = \left\{
\begin{array}{cl}
  4 & \text{if }\mathcal K\simeq\C,\cr
 \bigl|(E\times\Jac E')(\mathcal K)^\circ[\phi]\bigr|\cdot  {n_{E/\mathcal K}n_{\Jac E'/\mathcal K}}/{n_{\Jac C/\mathcal K}}& \text{if }\mathcal K\simeq\R, \cr
 c_{E/\mathcal K}c_{\Jac E'/\mathcal K}/c_{\Jac C/\mathcal K} & \text{if }\mathcal K/\Q_p \text{ finite, $p$ odd},\cr
 c_{E/\mathcal K}c_{\Jac E'/\mathcal K}/{c_{\Jac C/\mathcal K}}\cdot \Bigl|\frac{\phi^*\omega^\circ_{\Jac C/\mathcal K}}{\omega^\circ_{(E\times\Jac E')/\mathcal K}}\Bigr|_{\mathcal K} & \text{if } \mathcal K/\Q_2 \text{ finite},
\end{array}
\right.
$$ where $|x|_{\mathcal K}$ denotes the normalised absolute value of $x$.
\end{lemma}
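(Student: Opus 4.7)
Set $A = E \times \Jac E'$ and $B = \Jac C$. The plan is to treat each type of local field separately, in each case starting from the fundamental exact sequence
\[
0 \to \ker\phi|_{\mathcal K} \to A(\mathcal K) \xrightarrow{\phi} B(\mathcal K) \to \coker\phi|_{\mathcal K} \to 0
\]
and using that $\deg\phi = 4$, which follows from $\phi\phi^t = [2]$ on the $2$-dimensional abelian variety $A$.

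For $\mathcal K \simeq \C$, every abelian variety is divisible on $\C$-points, so $\phi$ is surjective there; hence $|\coker\phi|_{\C}| = 1$ and $|\ker\phi|_{\C}| = \deg\phi = 4$. For $\mathcal K \simeq \R$, the identity components $A(\R)^\circ$ and $B(\R)^\circ$ are compact connected real Lie groups, hence divisible, so $\phi$ restricts to a surjection $A(\R)^\circ \twoheadrightarrow B(\R)^\circ$ with kernel $A(\R)^\circ[\phi]$. The snake lemma applied to
\[
0 \to A(\R)^\circ \to A(\R) \to \pi_0 A(\R) \to 0, \qquad 0 \to B(\R)^\circ \to B(\R) \to \pi_0 B(\R) \to 0,
\]
joined vertically by $\phi$, together with $|\ker f|/|\coker f| = |G|/|H|$ for any homomorphism $f\colon G \to H$ of finite abelian groups, yields
\[
\frac{|\ker\phi|_{\R}|}{|\coker\phi|_{\R}|} = |A(\R)^\circ[\phi]| \cdot \frac{|\pi_0 A(\R)|}{|\pi_0 B(\R)|},
\]
and the stated formula follows from $|\pi_0 A(\R)| = n_{E/\R}\cdot n_{\Jac E'/\R}$ (since $\pi_0$ commutes with direct products) and $|\pi_0 B(\R)| = n_{\Jac C/\R}$.

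For $\mathcal K$ a finite extension of $\Q_p$, I would invoke the standard local comparison formula for isogenies (see \cite{parityforab}, Lemma~3.4, and references therein to Schaefer and Tate):
\[
\frac{|\ker\phi|_{\mathcal K}|}{|\coker\phi|_{\mathcal K}|} \;=\; \frac{c_{A/\mathcal K}}{c_{B/\mathcal K}} \cdot \Bigl|\tfrac{\phi^*\omega^\circ_{B/\mathcal K}}{\omega^\circ_{A/\mathcal K}}\Bigr|_{\mathcal K}.
\]
The proof filters $A(\mathcal K) \supset A^0(\mathcal K) \supset A^1(\mathcal K)$ (preimage of the identity component of the N\'eron special fibre, and kernel of reduction, i.e.\ the formal group) and similarly for $B$, then runs the snake lemma at each level: the component-group layer contributes the Tamagawa ratio $c_{A/\mathcal K}/c_{B/\mathcal K}$, the smooth-connected layer $A^0/A^1 = \widetilde{A}^0(\F_q)$ contributes trivially (Lang's theorem makes $\phi$ surjective there, with finite kernel and cokernel of equal size), and the formal-group layer produces the N\'eron-differential factor via a Haar-measure computation. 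When $p$ is odd, $\deg\phi = 4$ is coprime to $p$, so $\phi$ induces an isomorphism on formal groups and the differential factor is a unit of $\mathcal K$, recovering the third case.

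The main obstacle is the $p = 2$ case, where $\phi$ is inseparable on formal groups and the factor $|\phi^*\omega^\circ_B/\omega^\circ_A|_{\mathcal K}$ genuinely depends on a comparison of integral models of $A$ and $B$. This nontrivial ``wild'' correction is what eventually forces the introduction of the error term $H_{f,K_v}$ in \eqref{localequality}.
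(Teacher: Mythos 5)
The paper does not prove this lemma at all: it imports it verbatim from \cite{parityforab} (Lemma 3.4), so there is no in-paper argument to compare against. Your outline is a correct reconstruction of the standard proof given in that reference --- the divisibility argument over $\C$, the snake lemma against the component-group sequence over $\R$, and the N\'eron-model filtration (component group / special fibre via Lang's theorem / formal group) in the non-archimedean case, with the observation that $\deg\phi=4$ being prime to $p$ kills the differential factor for odd $p$ --- so it matches the intended approach.
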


\begin{lemma}[\cite{Gross} Prop. 3.2.2 and 3.3]
\label{le:comps}
For a smooth projective curve $\mathcal C$ over $\R$
$$
 n_{\Jac \mathcal C/\R} = \left\{
   \begin{array}{ll}
       2^{n_{\mathcal C/\R}-1} & {\text{if }} n_{\mathcal C/\R} > 0,\cr
       1 & {\text{if }} n_{\mathcal C/\R} = 0 {\text{ and }} \mathcal C {\text{ has even genus}},\cr
       2 & {\text{if }} n_{\mathcal C/\R} = 0 {\text{ and }}\mathcal C {\text{ has odd genus}}.
   \end{array}
\right.
$$
\end{lemma}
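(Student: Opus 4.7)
The plan is to reduce the determination of $n_{\Jac \mathcal C/\R}$ to a group-cohomology computation on $H_1$ and then read off the answer from the topology of the real curve. Writing $A = \Jac \mathcal C$ and $\sigma$ for complex conjugation, I would start from the exponential exact sequence
$$0 \longrightarrow H_1(A(\C),\Z) \longrightarrow \mathrm{Lie}\,A(\C) \longrightarrow A(\C) \longrightarrow 0,$$
which is $\Gal(\C/\R)$-equivariant. Since $\mathrm{Lie}\,A(\C)$ carries an $\R$-structure from $A/\R$, it is cohomologically trivial as a $\Gal(\C/\R)$-module. The associated long exact sequence, together with the fact that $A(\R)^\circ$ is precisely the image of $(\mathrm{Lie}\,A(\C))^\sigma$ under the exponential, yields the key identification
$$\pi_0(A(\R)) \;\cong\; H^1\!\bigl(\Gal(\C/\R),\, H_1(A(\C),\Z)\bigr).$$
For the Jacobian, $H_1(A(\C),\Z) = H_1(\mathcal C(\C),\Z)$ with $\sigma$ acting through the real structure of $\mathcal C$, so the problem becomes: compute the Tate cohomology of this lattice under complex conjugation.

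The next step is to interpret this cohomology topologically. The fixed locus of $\sigma$ on $\mathcal C(\C)$ is $\mathcal C(\R)$, a disjoint union of $s = n_{\mathcal C/\R}$ embedded circles in a genus-$g$ Riemann surface. I would invoke Comessatti's normal form for $(H_1(\mathcal C(\C),\Z), \sigma)$: there is a symplectic basis in which $\sigma$ decomposes as a direct sum of standard rank-$1$ and rank-$2$ blocks, whose multiplicities are controlled by $s$, $g$, and whether $\mathcal C$ is of type I or type II (i.e.\ whether $\mathcal C(\R)$ separates $\mathcal C(\C)$). The Tate cohomology of each block is elementary. When $s \geq 1$ the contributions are the same in both types, and the classes of the $s$ real components satisfy exactly one relation $\sum_i [C_i] = 0$ in $H_1(\mathcal C(\C),\Z)$, so summing block-by-block gives $\dim_{\F_2} H^1(\sigma, H_1) = s - 1$, i.e.\ $n_{\Jac \mathcal C/\R} = 2^{s-1}$.

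When $s = 0$, complex conjugation acts freely on $\mathcal C(\C)$ and the quotient is a non-orientable surface of Euler characteristic $2-2g$. The Comessatti normal form contains no rank-$1$ $\sigma$-invariant blocks, and a direct computation with the anti-holomorphic involution on $\Z^{2g}$ shows $H^1(\sigma, H_1)$ is trivial when $g$ is even and isomorphic to $\F_2$ when $g$ is odd, giving $n_{\Jac \mathcal C/\R} = 1$ or $2$ respectively. The main obstacle in this programme is the explicit description of the $\sigma$-module structure of $H_1(\mathcal C(\C),\Z)$ in terms of $s$, $g$, and the type of $\mathcal C$; this is precisely what Comessatti's theorem supplies, and it is the only non-formal input needed beyond the group-cohomology reduction.
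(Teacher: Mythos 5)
The paper does not prove this lemma at all: it is imported verbatim from Gross--Harris (\cite{Gross}, Prop.~3.2.2 and \S 3.3), and your argument is essentially a reconstruction of the proof given there --- the identification $\pi_0(\Jac \mathcal C(\R))\cong H^1\bigl(\Gal(\C/\R),H_1(\mathcal C(\C),\Z)\bigr)$ via the cohomologically trivial uniformization sequence, followed by the Comessatti normal form for the conjugation action on the period lattice. The outline is correct; the one step worth making explicit is that the oval classes $[C_i]$ are $\sigma$-invariant and therefore naturally live in $\hat{H}^0$ rather than $H^1$, so to get $\dim_{\F_2}H^1 = s-1$ you must either pass between the two Tate groups using the $\sigma$-anti-invariance of the unimodular intersection pairing or read the multiplicities of the rank-one $(-1)$-blocks directly off the normal form, exactly as in the cited reference.
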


%%%%%
\subsection{The local discrepancy} We present here an expression for the local discrepancy between $\lambda_{f,\mathcal K}$ and $w_{E/\mathcal K}w_{\Jac E'/ \mathcal K}$.

\begin{definition}\label{de:errorterm} Let $\mathcal K$ be a local field of characteristic 0 and let $f(x) = x^3+ax^2+bx+c\in \mathcal K[x]$ be such that $c\neq0$ and $ \Delta_f:= 18abc-4a^3c+a^2b^2-4b^3-27c^2\neq0$. Let $L=ab-9c$ and define $\E$ to be the following product of Hilbert symbols
$$\E = \begin{cases}(b,-c)_{\mathcal K}\cdot(-2L,\Delta_f)_{\mathcal K}\cdot(L,-b)_{\mathcal K}& b,L\neq0,\\(-c,-1)_{\mathcal K}\cdot(2c,\Delta_f)_{\mathcal K}& \text{otherwise}.\end{cases}$$ \end{definition}

\begin{remark}As $b$ or $L$ approach 0, both expressions for $H_{f,\mathcal K}$ agree. This can be seen in the proof of Lemma \ref{lem:thmcontinuity}.\end{remark}

      \begin{remark}\label{re:invariants}The invariant $L$ appearing in the definition of $\E$ can be written in terms of the roots of $f$ as
    $$L = 8\alpha_1\alpha_2\alpha_3-(\alpha_1+\alpha_2)(\alpha_1+\alpha_3)(\alpha_2+\alpha_3).$$      \end{remark}

\begin{theorem}[Local theorem]\label{thm:localthm} Let $\mathcal K$ be a local field of characteristic 0 and let $f(x)\in \mathcal K[x]$ be a separable monic cubic polynomial with $f(0) \neq0$. Then
$$w_{E/\mathcal K}w_{\Jac E'/\mathcal K} = \lambda_{f, \mathcal K} \E.$$
\end{theorem}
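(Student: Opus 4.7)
The plan is to prove the local equality by a case analysis on $\mathcal K$, following the natural division: archimedean places, non-archimedean places where $f$ is ``nice'' (integral, with $E$ and $\Jac E'$ semistable and $xf(x)$ having at most one repeated root), and the remaining non-archimedean places. Throughout, the strategy is to reduce $\lambda_{f,\mathcal K}$ to computable local arithmetic invariants via Lemma \ref{lem:kercoker} and Lemma \ref{le:comps}, evaluate the Hilbert symbol product $H_{f,\mathcal K}$ explicitly, and match against standard formulas for the local root numbers of $E$ and $\Jac E'$ (noting that $\Jac E'$ is an elliptic curve, so $w_{\Jac E'/\mathcal K}$ is a classical local root number).

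For $\mathcal K = \C$ every term equals $1$ and there is nothing to check. For $\mathcal K = \R$, the product $w_{E/\R}w_{\Jac E'/\R}$ equals $(-1)\cdot(-1)=1$, while Lemma \ref{lem:kercoker} together with Lemma \ref{le:comps} expresses $\lambda_{f,\R}$ purely in terms of $n_{E/\R}$, $n_{\Jac E'/\R}$, $n_{C/\R}$ and the $2$-torsion lying in the identity components; these depend only on the sign pattern of the $\alpha_i$. The Hilbert symbol factors $(x,y)_\R=-1$ iff both $x,y<0$, so $H_{f,\R}$ is also a function of the signs of $b$, $c$, $L$ and $\Delta_f$. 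The proof is then a finite casework verification over the possible sign patterns of $\alpha_1,\alpha_2,\alpha_3$ (and over the degenerate $b=0$ or $L=0$ cases, handled by the alternate branch of Definition \ref{de:errorterm}).

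For the non-archimedean ``nice'' case, I would use that translation $f(x)\mapsto u^{-3}f(ux{+}r)$ and rescaling do not affect $E$, $\Jac E'$ or the validity of the statement, to normalise $f$ so that $xf(x)$ has cluster picture of one of the shapes captured by the macros \goodEprime, \ctpEprime, \ctmEprime, \btpEprime, \btmEprime, \cbtpEprime, \cbtmEprime. For each such cluster type, the Tamagawa numbers $c_{E/\mathcal K}$, $c_{\Jac E'/\mathcal K}$, $c_{\Jac C/\mathcal K}$ and the deficiency of $C$ are read off directly from the cluster picture via the standard cluster recipes, and the local root numbers for semistable $E$ and $\Jac E'$ are similarly given by explicit formulas in the cluster data. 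The Hilbert symbols in $H_{f,\mathcal K}$ reduce via tame symbols to expressions in the valuations and residues of $b,c,L,\Delta_f$, which are themselves cluster invariants. Matching the two sides proceeds one cluster shape at a time.

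The remaining non-archimedean cases are the crux, and I expect them to be the main obstacle — especially when $\mathcal K$ has residue characteristic $2$, where Lemma \ref{lem:kercoker} carries an extra N\'eron-differential correction and root number recipes become subtle. My plan is a two-step attack: first, a continuity argument showing that, within a fixed stratum, each of $w_{E/\mathcal K}$, $w_{\Jac E'/\mathcal K}$, $\lambda_{f,\mathcal K}$ and $H_{f,\mathcal K}$ is locally constant in the coefficients of $f$ (the last by continuity of Hilbert symbols away from zero arguments), so that $f$ may be perturbed to land in a well-understood configuration. Second, a global-to-local deformation: given a local $f_0$ over $\mathcal K = K_{v_0}$, construct by weak approximation a global cubic $f\in K[x]$ over a number field $K$ matching $f_0$ at $v_0$ and lying in the ``nice'' stratum at every other place. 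Then Theorem \ref{thm:paritythm}, combined with the cases already established, the product formula $\prod_v H_{f,K_v}=1$ for Hilbert symbols, and known instances of the $2$-parity conjecture (e.g.\ over $\Q$) forces the identity $w_{E/K_{v_0}}w_{\Jac E'/K_{v_0}} = \lambda_{f,K_{v_0}}H_{f,K_{v_0}}$ at the single remaining place. The delicate point is choosing the global lift so that every auxiliary place is controlled simultaneously; I expect this bookkeeping, and the residue-characteristic-$2$ computation of the N\'eron-differential term, to be the technical heart of the argument.
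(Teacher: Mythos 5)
Your plan follows essentially the same route as the paper: trivial check at $\C$, sign-pattern casework at $\R$ via Lemmas \ref{lem:kercoker} and \ref{le:comps}, cluster-picture computation of Tamagawa numbers, deficiency and root numbers in the semistable odd-residue-characteristic case with at most one double root, and then continuity of all local invariants plus a weak-approximation/global-to-local argument using Theorem \ref{thm:paritythm}, the Hilbert symbol product formula, and known cases of the $2$-parity conjecture to force the identity at the one remaining place. The only slight misjudgement is that the paper never computes the N\'eron-differential correction at residue characteristic $2$ directly --- the entire point of the global deformation is to sidestep that term --- but this does not change the argument.
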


The essential property of this description is that it explains why $w_{E/\mathcal K}w_{\Jac E'/\mathcal K}$ and $ \lambda_{f,\mathcal K}$ always differ at an even number of places, since, by the product formula for Hilbert symbols, $\prod_v H_{f,K_v}\!=\!1$, where $K$ is a number field and the product runs over all places of $K$.
Sections \ref{s:Archthmproof}, \ref{s:nonArchthmproof} and \ref{s:thmproof} will be dedicated to proving this statement.

%%%%%%%%%%%%%%%%%%%%%%%%%%%%%%%%%
\section{Local Theorem over archimedean fields}\label{s:Archthmproof}
%%%%%%%%%%%%%%%%%%%%%%%%%%%%%%%%%

Here we prove that Theorem \ref{thm:localthm} holds when $\mathcal K$ is an archimedean local field.

\begin{proposition}\label{thm:localcomplex} Theorem \ref{thm:localthm} holds when $\mathcal K = \C$.\end{proposition}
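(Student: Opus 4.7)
The plan is to verify the identity of the Local theorem at $\mathcal{K}=\C$ by evaluating each of $w_{E/\C}w_{\Jac E'/\C}$, $\lambda_{f,\C}$, and $\Ec$ directly. The unifying observation is that because $\C$ is algebraically closed, all three quantities trivialise, so this is essentially a sanity check rather than a substantive computation.

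First, $\Ec=1$ immediately: every non-zero complex number is a square, so each Hilbert symbol $(a,b)_{\C}$ appearing in Definition~\ref{de:errorterm} is $+1$, regardless of which branch of the definition applies. Next, I use the standard formula for the archimedean root number of an abelian variety $A$ of dimension $g$ over $\C$, namely $w_{A/\C}=(-1)^g$. Since $E$ and $\Jac E'$ are both elliptic curves, $w_{E/\C}=w_{\Jac E'/\C}=-1$ and their product is $+1$.

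It remains to show $\lambda_{f,\C}=1$. Over the algebraically closed field $\C$ the curve $C$ admits rational divisors of every degree, so it is not deficient in the sense of \cite[\S 8]{Poonen}, and hence $\mu_{C/\C}=1$. The complex case of Lemma~\ref{lem:kercoker} gives $|\ker\phi|_{\C}|/|\coker\phi|_{\C}|=4$; since $\phi$ has $2$-torsion kernel, both $\ker\phi|_{\C}$ and $\coker\phi|_{\C}$ are elementary abelian $2$-groups, so their $\F_2$-dimensions differ by exactly $2$ and $(-1)^{\dim_{\F_2}\ker\phi|_{\C}-\dim_{\F_2}\coker\phi|_{\C}}=1$. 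Hence $\lambda_{f,\C}=1$, both sides of the desired identity equal $+1$, and the proposition follows. No step here is an obstacle; the genuine difficulties in proving the Local theorem all live at the real and non-archimedean places treated in the later sections.
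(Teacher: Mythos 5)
Your proof is correct and follows essentially the same route as the paper's (which simply records $w_{E/\C}=w_{\Jac E'/\C}=-1$, $\Ec=1$, and $\lambda_{f,\C}=(-1)^2=1$ via Lemma~\ref{lem:kercoker}); you merely spell out the justifications for each of these three evaluations in more detail.
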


\begin{proof}Here $w_{E/\C}=w_{\Jac E'/\C}=-1$, $\Ec = 1$ and by Lemma \ref{lem:kercoker} $\lambda_{f,\C} = (-1)^2=1$.\end{proof}

\begin{proposition}\label{thm:localreals} Theorem \ref{thm:localthm} holds when  $\mathcal K = \R$.\end{proposition}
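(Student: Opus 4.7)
The plan is to reduce both sides to $+1$ or $-1$ via explicit case analysis on the real structure of $f$. First I would note that for any elliptic curve over $\R$ the local root number equals $-1$, so $w_{E/\R}w_{\Jac E'/\R}=(-1)(-1)=+1$, and the theorem becomes the identity $\lambda_{f,\R}=H_{f,\R}$ in $\{\pm1\}$. Since the leading coefficient of $f(x^2)$ is $1>0$, the curve $C:y^2=f(x^2)$ has real points for $|x|$ large, so $C(\R)\neq\emptyset$. For a smooth genus $2$ curve over $\R$ with a real point, a degree-$1$ $\R$-rational divisor exists, so $C$ is not deficient and $\mu_{C/\R}=1$. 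Hence $\lambda_{f,\R}=(-1)^{v_2(|\ker\phi|_\R|/|\coker\phi|_\R|)}$.

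Next I would plug into Lemma~\ref{lem:kercoker} the real formula
$$\frac{|\ker\phi|_\R|}{|\coker\phi|_\R|}=\bigl|(E\times\Jac E')(\R)^\circ[\phi]\bigr|\cdot\frac{n_{E/\R}\,n_{\Jac E'/\R}}{n_{\Jac C/\R}}.$$
Using Lemma~\ref{le:comps} with $n_{C/\R}\geq1$ gives $n_{\Jac C/\R}=2^{n_{C/\R}-1}$. All four quantities on the right are determined by the real-root configuration of $f$: $n_{E/\R}$ by the number of real roots of $f$; $n_{\Jac E'/\R}$ by the number of real roots of $(x+\alpha_2\alpha_3)(x+\alpha_1\alpha_3)(x+\alpha_1\alpha_2)$ (cf.\ Remark~\ref{re:Jacobian}); $n_{C/\R}$ by the intervals where $f(x^2)\geq0$; and the real $2$-torsion of $E\times\Jac E'$ inside $\ker\phi$ by the explicit description in Lemma~\ref{lem:isogeny} (an element $((\alpha_i,0),(-\alpha_1\alpha_2\alpha_3/\alpha_i,0))$ is real iff $\alpha_i\in\R$), with the identity-component condition then reduced to asking whether $\alpha_i$ is the largest real root of $f$ and $-\alpha_1\alpha_2\alpha_3/\alpha_i$ is the largest real root of the cubic in Remark~\ref{re:Jacobian}.

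For the right-hand side, I would exploit the fact that over $\R$ the Hilbert symbol satisfies $(u,v)_\R=-1$ iff $u,v<0$, so $H_{f,\R}$ depends only on the signs of $b$, $c$, $L$ and $\Delta_f$ (with $\Delta_f>0$ iff $f$ has three real roots). The case analysis would then split into Case~(i) $f$ has exactly one real root $\alpha_3$, with subcases $\alpha_3>0$ and $\alpha_3<0$; and Case~(ii) $f$ has three real roots $\alpha_1<\alpha_2<\alpha_3$, broken into subcases according to how many $\alpha_i$ are positive (determining the sign of $c=-\alpha_1\alpha_2\alpha_3$), combined with the sign of $L=8\alpha_1\alpha_2\alpha_3-(\alpha_1+\alpha_2)(\alpha_1+\alpha_3)(\alpha_2+\alpha_3)$ and of $b=\alpha_1\alpha_2+\alpha_1\alpha_3+\alpha_2\alpha_3$, which can be controlled from the relative magnitudes of the roots. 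In each subcase I would tabulate $n_{E/\R}$, $n_{\Jac E'/\R}$, $n_{C/\R}$ and $|(E\times\Jac E')(\R)^\circ[\phi]|$, read off $\lambda_{f,\R}$, and compare with $H_{f,\R}$.

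The main obstacle I anticipate is the bookkeeping in Case~(ii), specifically tracking the identity-component $2$-torsion $|(E\times\Jac E')(\R)^\circ[\phi]|$: the kernel element $((\alpha_i,0),(-\alpha_1\alpha_2\alpha_3/\alpha_i,0))$ lies in the identity component only when both coordinates lie in the unbounded oval of their respective elliptic curve, and this imposes delicate inequalities between the $\alpha_i$ that must be matched against the signs of $L$ and $b$. A secondary obstacle is the degenerate loci $b=0$ and $L=0$ where the defining formula for $H_{f,\R}$ switches; I would handle these by checking that the two expressions agree in the limit (as indicated in the remark following Definition~\ref{de:errorterm} and proved in Lemma~\ref{lem:thmcontinuity}), so the identity at generic $f$ extends by continuity over $\R$ to the whole parameter space.
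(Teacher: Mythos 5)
Your plan is correct and follows essentially the same route as the paper: reduce to $\lambda_{f,\R}=H_{f,\R}$ using $w_{E/\R}w_{\Jac E'/\R}=+1$ and $\mu_{C/\R}=1$, tabulate $n_{E/\R}$, $n_{\Jac E'/\R}$, $n_{\Jac C/\R}$ and $\bigl|(E\times\Jac E')(\R)^\circ[\phi]\bigr|$ case by case over the real-root configurations of $xf(x)$ via Lemmas~\ref{lem:kercoker}, \ref{le:comps} and \ref{lem:isogeny}, and match against the sign analysis of the Hilbert symbols in $H_{f,\R}$. The only cosmetic difference is that the paper evaluates the $bL=0$ branch of $H_{f,\R}$ directly in each case rather than by a limiting argument, but this does not change the substance.
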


 \begin{proof} 
 
We note first that $w_{E/\R}w_{\Jac E'/\R} = +1$ so we need only verify that $\lambda_{f,\R}= \Er$. Table $\ref{table:reals}$ gives the values of $n_{E/\R}, n_{\Jac E'/\R}, n_{\Jac C/\R}, \bigl|(E\times\Jac E')(\R)^\circ[\varphi]\bigr|,\mu_{C/\R},\lambda_{f,\R}$ and $\Er$ for each possible arrangement of the real roots of $xf(x)$. In particular, column 2 lists the real roots of $xf(x)$ from smallest to largest where the roots of $f$ are denoted by red circles
($\smash{\raise4pt\hbox{\clusterpicture\Root[D]{1}{first}{r1};\endclusterpicture}}$), and the root 0 (of $x$) by a blue diamond ($\smash{\raise4pt\hbox{\clusterpicture\Root[C]{1}{first}{r1};\endclusterpicture}}$). 
 
  \begin{table}[h!]
\def\cltopskip{0pt}              % space above a cluster picture
\def\clbottomskip{3pt}           % space below a cluster picture
\def\graphdslabelscale{1.0}      % font size graphs
\def\cdepthscale{0.6}            % font size cluster pictures
$$
%\hskip-3mm
\scalebox{1}{$
\begin{array}{| c|c||c|c|c|c|c|c|c|}
\hline
\text{Case} & \text{Real roots}  & n_{E/\R} & n_{\Jac E'/\R} & n_{\Jac C/\R}&  \bigl|(E\times\Jac E')(\R)^\circ[\varphi]\bigr| &\mu_{C/\R}&\lambda_{f,\R}& \Er \\
\hline
1&{\smash{\raise2pt\hbox{\clusterpicture\Root[C] {1} {first} {r1};\Root[D] {} {r1} {r2};\Root[D] {} {r2} {r3};\Root[D] {} {r3} {r4};\endclusterpicture}}}  & 2& 2 & 4 &2 & 1&-1& -1\\
 \hline
2 & {\smash{\raise2pt\hbox{\clusterpicture\Root[D] {1} {first} {r1};\Root[C] {} {r1} {r2};\Root[D] {} {r2} {r3};\Root[D] {} {r3} {r4};\endclusterpicture}}} &2& 2 & 2  &1& 1&-1& -1 \\
 \hline
3 &{\smash{\raise2pt\hbox{\clusterpicture\Root[D] {1} {first} {r1};\Root[D] {} {r1} {r2};\Root[C] {} {r2} {r3};\Root[D] {} {r3} {r4};\endclusterpicture}}} &2& 2 &1& 1  &1 &+1 & +1\\
 \hline
4  &  {\smash{\raise2pt\hbox{\clusterpicture\Root[D] {1} {first} {r1};\Root[D] {} {r1} {r2};\Root[D] {} {r2} {r3};\Root[C] {} {r3} {r4};\endclusterpicture}}} &2& 2 & 1&1  &1 &+1 & +1\\
 \hline
5  & {\smash{\raise2pt\hbox{\clusterpicture\Root[C] {1} {first} {r1};\Root[D] {} {r1} {r2};\endclusterpicture}}} &1& 1&1 & 2  &1 &-1& -1 \\
 \hline
6  & {\smash{\raise2pt\hbox{\clusterpicture\Root[D] {1} {first} {r1};\Root[C] {} {r1} {r2};\endclusterpicture}}}  &1& 1 &1& 2  &1 &-1 & -1\\
\hline
\end{array}$}$$\caption{Local data when $\K=\R$}\label{table:reals}\end{table}

The contents of columns 3, 4 and 5 are determined using Lemma \ref{le:comps} and observing that \\
$n_{E/\R}~ = ~n_{E'/\R}~ = ~2$ when $f$ has 3 real roots and 1 otherwise, and that $n_{C/\R} = 3$ when $f$ has 3 positive real roots, $2$ when $f$ has 2 positive real roots and 1 otherwise.

For column 6 we use the description of $\ker\phi$ as given in Lemma \ref{lem:isogeny}. We count how many elements belong to the identity component of $(E\times\Jac E')(\R)$ (i.e. each entry of the pair lies on the identity component of the corresponding curve).
Clearly 
the point at infinity on $(E\times\Jac E')$ always satisfies these conditions. Let $\alpha_3$ be the largest real root of $f$, so that $(\alpha_3,0)\in E(\R)^\circ$ and $(\alpha_1,0), (\alpha_2,0)~\notin ~E(\R)^\circ$. Now $\bigl|(E\times\Jac E')(\R)^\circ[\varphi]\bigr|$ is $2$ precisely when $(-\alpha_1\alpha_2,0)\in\Jac E'(\R)^\circ$, i.e. when $-\alpha_1\alpha_2$ is the largest real element of $S = \{-\alpha_1\alpha_2, -\alpha_2\alpha_3, -\alpha_1\alpha_3\}$, and $1$ otherwise. In cases 1-4, $\alpha_1, \alpha_2, \alpha_3\in\R$ so $S\subseteq\R$ and it can be observed that $-\alpha_1\alpha_2$ is the largest precisely when $\alpha_1, \alpha_2>0.$ In cases 5 and 6, $\alpha_1 = \bar\alpha_2$ and so $-\alpha_1\alpha_2$ is the only real element of $S$, in particular it is the largest real element.

Column 7 keeps track of the deficiency contribution from $C/\R$. As $C(\R)$ is always non-empty, this is 1.

Column 8 gives the value of $\lambda_{f,\R}$ calculated using Lemma \ref{lem:kercoker}.

We now justify the value of $\Er$, as given in column 9, via a case by case analysis of the signs of the Hilbert symbol entries.

\textbf{Claim:} When $b,L\neq 0$, $\Er = (-2L, \Delta_f)_\R\cdot(L,-ac)_\R\cdot(a,-c)_\R$. {This is true as $(L, abc)_\R\!=\!(ab, -c)_\R$ by the standard Hilbert symbol identity $(A+B,-AB)_\R\!=\!(A,B)_\R$, thus $(L,-b)_\R\!=\!(L, -ac)_\R \cdot(ab, -c)_\R$.}

Since the sign of $a$ is easier to control than that of $b$, we will use this equivalent expression for $\Er$ whenever $b,L\neq0$.

1,4) Here $\Delta_f, b, ac>0$. Applying the AM-GM inequality gives that $\alpha_i+\alpha_j>2\sqrt{\alpha_i\alpha_j}$ in case 1 and $\alpha_i+\alpha_j>-2\sqrt{\alpha_i\alpha_j}$ in case 4, for $i\neq j$. In particular, in case 1 we have $L<0$ and $\Er = -1$ and in case 4 we have $L>0$ and $\Er = 1$.

2) Here $\Delta_f, c>0$. If $b,L\neq 0$ then $\Er =(L, -a)_\R\cdot(a, -1)_\R$ which is $-1$ unless $a,L>0$. Suppose $\alpha_1<0$, if $a>0$ then $\alpha_1 + \alpha_2, \alpha_1 +\alpha_3<0$ and $\alpha_2+\alpha_3>0$ so $L<0$ using Remark \ref{re:invariants}. If $bL=0$ then clearly $\Er$ again evaluates to $-1$.

3) Here $\Delta_f>0, c<0$. If $b,L\neq 0$ then $\Er = (L,a)_\R$ which is $1$ unless $a,L<0$. Suppose $\alpha_1,\alpha_2<0$, if $a<0$ then $\alpha_1 + \alpha_3, \alpha_2 +\alpha_3>0$ and $\alpha_1+\alpha_2<0$ so $L>0$ using Remark \ref{re:invariants}. If $bL=0$ then clearly $\Er$ again evaluates to $1$.

5) Here $\Delta_f,c<0$. If $b,L\neq 0$ then $\Er =-(L,-a)_\R$ which is $-1$ unless $a>0$, $L<0$. Suppose $\alpha_2 = \bar\alpha_1$, if $a>0$ then $\alpha_1 + \alpha_2<0$ so $L>0$ using Remark \ref{re:invariants}. If $bL=0$ then clearly $\Er$ again evaluates to $-1$.

6) Here $\Delta_f<0, c>0$. If $b,L\neq 0$ then $\Er =-(-L,a)_\R$ which is $-1$ unless $a<0$, $L>0$. Suppose $\alpha_2 = \bar\alpha_1$, if $a<0$ then $\alpha_1 + \alpha_2>0$ so $L<0$ using Remark \ref{re:invariants}. If $bL=0$ then clearly $\Er$ again evaluates to $-1$. \end{proof}

%%%%%%%%%%%%%%%%%%%%%%%%%%%%%%%%%
\section{Local Theorem over non-archimedean fields for nice reduction types}\label{s:nonArchthmproof}
%%%%%%%%%%%%%%%%%%%%%%%%%%%%%%%%%

Here we prove that Theorem \ref{thm:localthm} holds when $\mathcal K/\Q_p$ is a finite extension for an odd prime $p$ and the reduction of $E'$ is of a certain type. 
%\begin{notation}
We write $\pi_{\K}$ for a choice of uniformiser of $\K$ and $v$ for the normalised valuation. 
%\end{notation}
We begin by recalling some useful terminology. 

\subsection{Clusters: curves over local fields with odd residue characteristic}\label{ss:clusterpics}

To keep track of the arithmetic invariants needed to compute $\lambda_{f,\K}$ as in Lemma \ref{lem:kercoker}, we use the machinery of ``clusters'' (\cite{m2d2}, \cite{usersguide}). Clusters allow us to extract arithmetic invariants of hyperelliptic curves over $p$-adic fields with $p$ odd from simple combinatorial data (see Example \ref{examplecluster} below for a worked out example).

\begin{definition}[Clusters]
\label{def:cluster}
Let $\mathcal K$ be a finite extension of $\Q_p$ where $p$ is an odd prime 
and $C\!:\!y^2\!=\!cf(x)$ a hyperelliptic curve, where $c\in \mathcal K^\times$ and $f(x)\in \mathcal K[x]$ is monic with set of roots $\cR$. A {\em cluster} is a non-empty subset $\s\subset\cR$ of the form $\s = D \cap \cR$ for some disc $D=\{x\!\in\! \bar{\mathcal K} \mid v(x-z)\!\geq\! d\}$ for some $z\in \bar{\mathcal K}$~and~$d\in \Q$. 

For a cluster $\s$ of size $>1$, its {\em depth} $d_\s$ is the maximal $d$ for which $\s$ is cut out by such a disc, that is $d_\s\! =\! \min_{r,r' \in \mathfrak{s}} v(r\!-\!r')$. If moreover $\s\neq \cR$, then its {\em relative depth} is $\delta_\s\! =\! d_\s\! -\!d_{P(\s)}$, where $P(\s)$ is the smallest cluster with $\s\subsetneq P(\s)$ (the ``parent'' cluster).

We refer to this data as the {\em cluster picture} of $C$, denoted $\Sigma_{C/\mathcal K}$.

\end{definition}

\begin{notation}\label{not:drawcluster}
We draw cluster pictures by drawing roots $r \in \cR$ as \smash{\raise4pt\hbox{\clusterpicture\Root[]{1}{first}{r1};\endclusterpicture}} and drawing ovals around roots to represent clusters (of size $>1$). If we wish to specify which root is which, we instead draw the roots of $f$ as red circles
$\smash{\raise4pt\hbox{\clusterpicture\Root[D]{1}{first}{r1};\endclusterpicture}}$ and the root 0 (of $x$) as a blue diamond $\smash{\raise4pt\hbox{\clusterpicture\Root[C]{1}{first}{r1};\endclusterpicture}}$. For example:
$$
\scalebox{1.4}{\clusterpicture            % [[[1,2]_{n},3,4,5],6]
  \Root[] {1} {first} {r1};
  \Root[] {} {r1} {r2};
  \Root[] {} {r2} {r3};
  \Root[] {3} {r3} {r4};
  \Root[] {1} {r4} {r5};
  \Root[] {} {r5} {r6};
  \ClusterD c1[{2}] = (r1)(r2)(r3);
  \ClusterD c2[{2}] = (r5)(r6);
  \ClusterD c3[{1}] = (r4)(c2);
  \ClusterD c4[{0}] = (c1)(c2)(c3);
\endclusterpicture}
\qquad
\text{or}
\qquad
\scalebox{1.4}{\clusterpicture            % [[1,2]_{n}^{+},3,4]A
  \Root[C] {1} {first} {r2};
%  \Root {1} {r1} {r2};
  \Root[D] {} {r2} {r3};
  \ClusterD c1[1] = (r2)(r3);
  \Root[D] {1} {c1} {r4};
  \Root[D] {} {r4} {r5};
 % \ClusterD c2[{2r}] = (r4)(r5);
  \ClusterD c3[0] = (r2)(c1)(r4)(r5);
\endclusterpicture}
$$
The subscript on the largest cluster $\cR$ is its depth; on the other clusters it is their relative depth.
\end{notation}

\begin{definition}[Twins and signs]\label{def:tangenttwin}
A {\em twin} is a cluster of size 2. For each twin $\mathfrak t = \{r_1, r_2\}$ pick a square root $\theta_{\mathfrak t}=\sqrt{\prod_{r\notin\mathfrak t}(\frac{r_1+r_2}{2}-r)}$ and define its {\em sign} $\pm$ through the formula $\frac{\Frob_{\mathcal K}(\theta_{\mathfrak t})}{\theta_{\Frob_{\mathcal K}(\mathfrak t)}}\equiv\pm1$ in the residue field, where $\Frob_{\mathcal K}$ denotes a Frobenius element in $\Gal(\bar{\mathcal K}/\mathcal K)$. Note that the sign depends on the choice of square root. \end{definition}

\begin{example}\label{examplecluster} Let $\K = \Q_{17}$ and $E: y^2 = (x-17)(x-1)(x-2)$. Then $E':y^2=x(x-17)(x-1)(x-2)$ and by Remark \ref{re:Jacobian} $\Jac E': y^2 = (x+2)(x+34)(x+17)$. We see that both $\Sigma_{E'/\Q_{17}}$ and $\Sigma_{\Jac E'/\Q_{17}}$ have a twin cluster $\mathfrak t_1=\{0,17\}$ and $\mathfrak t_2=\{-34,-17\}$ respectively. To determine the sign on $\mathfrak t_1$ we compute $\theta_{\mathfrak t_1} = \sqrt{(\frac{17}{2}-1)(\frac{17}{2}-2)}$ 
and find that $\frac{\Frob_{\Q_{17}}(\theta_{\mathfrak t_1})}{\theta_{\Frob_{\Q_{17}}(\mathfrak t_1)}}\equiv 1 \mod 17$. It follows that both $\mathfrak t_1$ and $\mathfrak t_2$ have sign $+1$. Therefore
$$
\Sigma_{E/\Q_{17}} = \goodE, \quad \Sigma_{E'/\Q_{17}} = \ctEprime, \quad \Sigma_{\Jac E'/\Q_{17}} = \JCtwin.
$$
Using \cite[Theorems 3.1 \& 3.3]{omri} with $f_1(x) = x$, $f_2(x) =  (x-17)(x-1)(x-2)$  we find that  $\Upsilon_{C/\Q_{17}}$ is $\tgrGBex$ where Frobenius fixes the loop. Following \cite{m2d2} Table 1.1, $C$ has type  $1_1^+$  so that we have $c_{\Jac C/\Q_{17}} = 1$ and $\mu_{C/\Q_{17}} =1$. 
Tamagawa numbers and local root numbers for both elliptic curves $E$ and $\Jac E'$ are $c_{E/\Q_{17}} = 1$, $c_{\Jac E'/\Q_{17}}=2$, $w_{E/\mathcal \Q_{17}}=1$ and $w_{\Jac E'/\mathcal \Q_{17}} =-1$. By Definition \ref{de:localterm} and Lemma \ref{lem:kercoker} it follows that $\lambda_{f,\Q_{17}} = -1$.  A simple computation shows that $H_{f,\Q_{17}} =+1$ which proves Theorem \ref{thm:localthm} for this particular choice of $E$.
\end{example}

\subsection{Proof of Theorem \ref{thm:localthm} when the cluster picture for $E'$ has either 0 or 1 twin only.}

\begin{proposition}\label{thm:localQp}Theorem \ref{thm:localthm} holds when $\mathcal K/\Q_p$ is a finite extension for some odd prime $p$, \\$f(x)\in\mathcal O_{\mathcal K}[x]$ and both $E$, $E'$ are semistable with $E'$ having cluster picture
either  {\smash{\raise2pt\hbox{\clusterpicture            % [1,2,3]_{\s}
  \Root[] {1} {first} {r1};
  \Root[] {} {r1} {r2};
  \Root[] {} {r2} {r3};
    \Root[] {} {r3} {r4};
  \ClusterD c1[0] = (r1)(r2)(r3)(r4);
\endclusterpicture}}} or
 \clusterpicture            % [[1,2]_{n}^{-},3,4]A
  \Root[] {1} {first} {r2};
  \Root[] {} {r2} {r3};
  \Cluster c1 = (r2)(r3);
  \Root[] {1} {c1} {r4};
  \Root[] {} {r4} {r5};
  \ClusterD c3[0] = (r2)(c1)(r4)(r5);
\endclusterpicture.\end{proposition}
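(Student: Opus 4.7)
The plan is to verify $w_{E/\K}w_{\Jac E'/\K} = \lambda_{f,\K}\E$ directly, case by case, by computing every factor from the cluster combinatorial data of $E$, $E'$, and $C$. Since $p$ is odd and both $E,E'$ are semistable, their local root numbers take only the values $+1$ (good or split multiplicative reduction) or $-1$ (nonsplit multiplicative reduction), and these reduction types, along with $c_{E/\K}$ and $c_{\Jac E'/\K}$, can be read off from the cluster pictures of $E$ and $E'$. The cluster picture of $C:y^2=f(x^2)$, whose roots are $\pm\sqrt{\alpha_i}$, is built from that of $E'$ via \cite[Theorems 3.1 \& 3.3]{omri}, and then $c_{\Jac C/\K}$ and $\mu_{C/\K}$ are extracted using the tables of \cite{m2d2}; this assembles $\lambda_{f,\K}$ through Lemma \ref{lem:kercoker} and Definition \ref{de:localterm}. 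On the Hilbert symbol side, since $p$ is odd, the symbol of two units is trivial and $(u,\pi_\K)_\K$ for a unit $u$ is $+1$ iff $u$ is a square modulo $\pi_\K$; this reduces the computation of $\E$ to tracking the parities of $v(b), v(c), v(L), v(\Delta_f)$ and recording residues modulo $\pi_\K$.

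In the case $\Sigma_{E'/\K}=\goodEprime$, the four roots $0,\alpha_1,\alpha_2,\alpha_3$ are pairwise distinct modulo $\pi_\K$; hence $E$ and $\Jac E'$ both have good reduction, all of $b,c,L,\Delta_f$ are units, and so $w_{E/\K}=w_{\Jac E'/\K}=c_{E/\K}=c_{\Jac E'/\K}=1$ and $\E=1$. The six roots $\pm\sqrt{\alpha_i}$ of $f(x^2)$ remain pairwise distinct modulo $\pi_\K$ in $\bar{\K}$, so $C$ also has good reduction, $c_{\Jac C/\K}=\mu_{C/\K}=1$, and $\lambda_{f,\K}=1$, confirming both sides are $1$.

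In the twin case, I will distinguish two subcases according to whether the twin $\mathfrak t\subset\Sigma_{E'/\K}$ contains $0$ or not. If $\mathfrak t=\{0,\alpha_1\}$ (after reindexing), then $v(\alpha_1)>0$ while $\alpha_2,\alpha_3$ are distinct units; $E$ then has good reduction while $\Jac E'$ acquires a twin $\{-\alpha_1\alpha_2,-\alpha_1\alpha_3\}$ and hence multiplicative reduction, split or nonsplit according to the sign of $\mathfrak t$. If instead $\mathfrak t=\{\alpha_i,\alpha_j\}$, the roles of $E$ and $\Jac E'$ are interchanged. In each subcase, the cluster picture of $C$ is computed in terms of $\pm\sqrt{\alpha_i}$, the Tamagawa data for $\Jac C$ are read off, and $\lambda_{f,\K}$ follows. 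The Hilbert symbol $\E$ is computed by recording, via Remark \ref{re:invariants}, which of $b,c,L$ have positive valuation; the sign of $\mathfrak t$ enters through the residues of $L$ and $c$.

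The main obstacle is ensuring that the sign on $\mathfrak t$, which controls the split-vs-nonsplit type of the multiplicative elliptic curve, matches the residue-symbol contributions to $\E$ and the deficiency $\mu_{C/\K}$ simultaneously in every subcase. This amounts to combining Remark \ref{re:invariants} with standard tame Hilbert symbol identities, and carefully tracking how square classes in $\{\alpha_1,\alpha_2,\alpha_3\}$ propagate to signs on the induced twin(s) in $\Sigma_{C/\K}$. Enumerating the resulting finite list of subcases and checking the identity in each is the bulk of the work.
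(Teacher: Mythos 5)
Your overall strategy---an exhaustive case-by-case verification in which every factor ($c_{E/\K}$, $c_{\Jac E'/\K}$, $c_{\Jac C/\K}$, $\mu_{C/\K}$, the root numbers, and the Hilbert symbols) is read off from the cluster data---is exactly the route the paper takes, including the use of \cite{omri} for the model of $C$ and the tame evaluation of $\E$ via valuations and residues of $b,c,L,\Delta_f$. The no-twin case and the subcase where the twin of $\Sigma_{E'/\K}$ contains $0$ are set up correctly.

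However, there is a genuine error in your second twin subcase. When $\mathfrak t=\{\alpha_i,\alpha_j\}$ with $0\notin\mathfrak t$, the roles of $E$ and $\Jac E'$ are \emph{not} interchanged: writing $\alpha_2\equiv\alpha_3$ as units and $\alpha_1$ a distinct unit, the roots $-\alpha_1\alpha_2$ and $-\alpha_1\alpha_3$ of $\Jac E'$ also collide modulo $\pi_\K$, so $\Jac E'$ acquires a twin $\{-\alpha_1\alpha_2,-\alpha_1\alpha_3\}$ of the same depth and has multiplicative reduction, just as $E$ does. Your plan, taken literally, would assign $\Jac E'$ good reduction here, giving $c_{\Jac E'/\K}=1$ and $w_{\Jac E'/\K}=+1$, which is wrong. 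This is not a cosmetic slip: the whole content of this subcase is the comparison of the \emph{signs} of the two twins, on $E$ (governed by whether $\tfrac12(\alpha_2+\alpha_3)-\alpha_1$ is a square) and on $\Jac E'$ (governed by whether $\tfrac12(\alpha_2+\alpha_3)(\tfrac12(\alpha_2+\alpha_3)-\alpha_1)$ is a square). These signs can disagree, and it is precisely when they disagree that $w_{E/\K}w_{\Jac E'/\K}=-1$ and $\E=(\pi_\K^n,\tfrac12(\alpha_2+\alpha_3))_\K$ must pick up the discrepancy; with your assumption the identity would simply fail in those cases. You need to track both twins and their signs simultaneously, as in the paper's types $\mathrm{I}_{n,n}^{+,+}$, $\mathrm{I}_{n\sim n}^{+}$, $\mathrm{I}_{n,n}^{-,-}$.

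A secondary gap: your treatment of $\E$ assumes the generic branch of Definition \ref{de:errorterm}, but even in the good-reduction case the quantity $L=ab-9c$ can vanish or acquire positive valuation while all of $b,c,\Delta_f$ are units (this is especially delicate over extensions of $\Q_3$, where $v(9)>0$), and similarly $b$ can vanish in the twin cases. The paper has to run a separate sub-analysis for $v(L)>0$ and for the $bL=0$ branch in each type; your plan should at least flag how these degenerate configurations are handled, since the claim ``all of $b,c,L,\Delta_f$ are units'' in your good-reduction paragraph is not true in general.
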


 \begin{proof} 

The input of Table $\ref{reductiontype}$ are the cluster pictures of $E$ and $E'$ (columns 2 and 3), coloured according to Notation \ref{not:drawcluster}, such that $E'$ has at most one double root. Column 1 indexes the various cases using the reduction types of $C/\K$ as defined in \cite{m2d2}, Table 1.1. 
  
     \begin{table}[h!]
\def\cltopskip{3pt}              % space above a cluster picture
\def\clbottomskip{3pt}           % space below a cluster picture
\def\graphdslabelscale{1.0}      % font size graphs
\def\cdepthscale{0.6}            % font size cluster pictures
$$
%\hskip-3mm
\scalebox{0.78}{$
\begin{array}{| c|c|c||c|c|c|c|c|c|c|c|c|c|}
\hline
\text{Types} & \Sigma_{E/\mathcal K}  &\Sigma_{E'/\mathcal K}& \Sigma_{\Jac E'/\mathcal K}& \Upsilon_{C/\mathcal K}& c_{E/\mathcal K} & c_{\Jac E'/\mathcal K}& c_{\Jac C/\mathcal K} &\mu_{C/\mathcal K}& \lambda_{f,\mathcal K}& w_{E/\mathcal K}w_{\Jac E'/\mathcal K} & \E\\
\hline
2& \goodE  &\goodEprime & {\smash{\raise2pt\hbox{\Jgood}}}& \tgrGA & 1 & 1 & 1 & 1&+1&+1& +1\\
 \hline
 1_n^+ & \goodE  &\ctpEprime& \JCtwinP &\tgrGB & 1 & 2n & n & 1&-1& -1 &+1\\
 \hline
 1_n^- & \goodE  &\ctmEprime& \JCtwinM&\tgrGC& 1 & 2 & \tilde n &1 &(-1)^n&+1 & (-1)^n\\
 \hline
\text{I}_{n,n}^{+,+}  & \tpE  &\btpEprime&\JMtwinP& \tgrGD & n & n & n^2 &1 &+1&+1& +1\\
 \hline
\text{I}_{n\sim n}^{+}(a)  & \tpE  &\btmEprime&\JMtwinM& \tgrGG & n & \tilde n & n &1 &(-1)^{n+1}&-1  & (-1)^n\\
 \hline
\text{I}_{n\sim n}^{+}(b)  & \tmE  &\btpEprime&\JMtwinP& \tgrGG & \tilde n & n & n &1 &(-1)^{n+1}& -1 &(-1)^n\\
 \hline
\text{I}_{n,n}^{-,-}  & \tmE  &\btmEprime&\JMtwinM& \tgrGF & \tilde n & \tilde n & \tilde n^2 & 1&+1&+1  &+1\\
 \hline
\end{array}$}
$$
\hbox{\footnotesize{Notation: $\tilde x=2$ if $2|x$ and $\tilde x=1$ if $2\nmid x$. }}
\caption{Local data for $\mathcal K/\Q_p$ when $E'$ has at worst one double root}
\label{reductiontype}
\end{table}

Column 4 gives the cluster picture for $\Jac E'$, which is easily determined from that of $E'$ using Remark \ref{re:Jacobian}. 

Column 5 gives the dual graph of the minimal regular model of $C$, denoted $\Upsilon_{C/\mathcal K}$, where an arrow is used to indicate the action of Frobenius. This is determined using \cite[Theorems 3.1 \& 3.3]{omri} with $f_1(x) = x$, $f_2(x) = f(x)$ so that $Y$ becomes $C$.

Columns 6 and 7 list the Tamagawa numbers for $E$ and $\Jac E'$, calculated from their respective cluster pictures using \cite[Table 15.1]{silverman2009arithmetic}.

Similarly, column 8 contains the Tamagawa number for $\Jac C$ but this time calculated from $\Upsilon_{C/\mathcal K}$ using \cite[Lemma 2.22 \& Remark 2.23]{m2d2}.

Column 9 keeps track of the deficiency contribution from $C/\mathcal K$, using \cite[Theorem 12.4]{m2d2}. 

Column 10 gives the value of $\lambda_{f,\mathcal K}$ calculated using Lemma \ref{lem:kercoker}.

Column 11 gives $w_{E/\mathcal K}w_{\Jac E'/\mathcal K}$ using that for a semistable elliptic curve $E$, $w_{E/\mathcal K}=-1$ only when the curve has split multiplicative reduction, i.e. a twin with sign $+$.

It remains to compute the value of $\E$ via a case by case analysis of the valuations of $b$, $c$, $L$, $\Delta_f$.  Unless specified otherwise, equivalences are taken mod $\pi_{\mathcal K}$. By assumption $a, b, c\in\mathcal O_{\mathcal K}$. Recall that $\alpha_1, \alpha_2, \alpha_3$ denote the roots of $f$.

{\bf Type $2$.} We must show that $\E=1$. We have that $v(c)=v(\Delta_f)=0$. 

\textit{Suppose $b,L\neq0$.} Then $\E = (b,-c)_{\mathcal K}\cdot(L, -b\Delta_f)_{\mathcal K}$. If $v(b)=v(L)=0$ we are done. Suppose that $\mathcal K/\Q_p$ and $p\neq 3$. If $b\equiv 0$ then $L\equiv-9c$ so $\E = 1$. If $L\equiv 0$ then $v(a)=v(b)=0$ and $-b\Delta_f\equiv\frac{4}{b^2}(b^3-27c^2)^2$ so again $\E=1$. 

Now suppose that $\mathcal K/\Q_3$ and $L\equiv 0$. Write $b = \pi_{\mathcal K}^{v(b)}u_1$, $3=\pi_{\mathcal K}^{v(3)}u_2$ for units $u_1, u_2$ and assume that $v(b)>0$ (if $v(b)=0$ then we again observe that $-b\Delta_f$ is a square). Note that $\Delta_f\equiv-a^3c$ so $v(a)=0$.
	   \begin{itemize}
     \item If $v(b)<2v(3)$, then $L = \pi_{\mathcal K}^{v(b)}(au_1-\pi_{\mathcal K}^{2v(3)-v(b)}cu_2^2)$ where $au_1-\pi_{\mathcal K}^{2v(3)-v(b)}cu_2^2\equiv au_1$. So $\E = 1$ having used the standard Hilbert symbol identity $(\pi_{\mathcal K}, -\pi_{\mathcal K})_{\mathcal K} = 1$.
     \item If $v(b)=2v(3)$, then $L = \pi_{\mathcal K}^{v(b)}(au_1-cu_2^2)$ and $\E= (au_1-cu_2^2,-u_1\Delta_f)_{\mathcal K}$, having used that $v(b)$ is even. If $v(au_1-cu_2^2)=0$ then we are done, else $-u_1\Delta_f \equiv u_2^2a^2c^2$. So $\E = 1$.
       \item If $v(b)>2v(3)$, then $L = \pi_{\mathcal K}^{2v(3)}(\pi_{\mathcal K}^{v(b)-2v(3)}au_1-cu_2^2)$ where $\pi_{\mathcal K}^{v(b)-2v(3)}au_1-cu_2^2\equiv-cu_2^2$. So $\E=1$.
          \end{itemize}	

\textit{Suppose $bL=0$.} Clearly $\E=1$.

{\bf Types $1_n^+$ or $1_n^-$.} Suppose that $\alpha_1\equiv0$ with $v(\alpha_1) = n$. Since $\theta_{\mathfrak t, E'} = \sqrt{\frac{1}{4}(\alpha_1-2\alpha_2)(\alpha_1-2\alpha_3)}$, it can be observed that we are in type $1_n^+$ when $\alpha_2\alpha_3\in(\mathcal K^\times)^2$ (when $\theta_{\mathfrak t, E'}\in \mathcal K^\times$), and type $1_n^-$ otherwise.  We are required to show that $\E=-1$ precisely when $n$ is odd and $\alpha_2\alpha_3\notin(\mathcal K^\times)^2$, i.e. that $\E = (\pi_{\mathcal K}^n,  \alpha_2\alpha_3)_{\mathcal K}$. We have that $v(b) = v(\Delta_f) = 0$ and $v(c) = n$.

\textit{Suppose $L\neq0$.} Then $\E = (b,\pi_{\mathcal K}^n)_{\mathcal K}\cdot(L, -b\Delta_f)_{\mathcal K}$ and as $b\equiv \alpha_2\alpha_3$  it remains to show that $(L, -b\Delta_f)_{\mathcal K} = 1$. If $v(L)=0$ we are done. If not, then since $L\equiv -\alpha_2\alpha_2(\alpha_2+\alpha_3)$ we have that $\alpha_2 \equiv - \alpha_3$ and $-b\Delta_f\equiv4\alpha_3^8$.

\textit{Suppose $L=0$.} Then $\E = (\pi_K^n,-\Delta_f)_{\mathcal K}$ where $\Delta_f = -\frac{4}{b^3}(b^3-27c^2)^2$ and $b\equiv \alpha_2\alpha_3$.

{\bf Types $\mathrm{I}_{n,n}^{+,+}$, $\mathrm{I}_{n\sim n}^{+}(a)$, $\mathrm{I}_{n\sim n}^{+}(b)$, or $\mathrm{I}_{n,n}^{-,-}$.} Suppose that $v(\alpha_1)=0$ and $\alpha_2\equiv\alpha_3$ with $v(\alpha_2-\alpha_3) = \frac{n}{2}.$ 
Since $\theta_{\mathfrak t, E} = \sqrt{\frac{1}{2}(\alpha_2+\alpha_3)-\alpha_1}$ and $\theta_{\mathfrak t, E'} = \sqrt{\frac{1}{2}(\alpha_2+\alpha_3)(\frac{1}{2}(\alpha_2+\alpha_3)-\alpha_1)}$, it can be observed that we are in type $\mathrm{I}_{n,n}^{+,+}$ or $\mathrm{I}_{n,n}^{-,-}$ when $\frac{1}{2}(\alpha_2+\alpha_3)\in (\mathcal K^\times)^2$ (when $\theta_{\mathfrak t, E},\theta_{\mathfrak t, E'}\in \mathcal K^\times$, or $\theta_{\mathfrak t, E},\theta_{\mathfrak t, E'}\not\in \mathcal K^\times$), and type $\mathrm{I}_{n,n}^{+,-}$ or $\mathrm{I}_{n,n}^{-,+}$ otherwise. We are required to show that $\E = -1$ precisely when $n$ is odd and $\frac{1}{2}(\alpha_2+\alpha_3)\notin (\mathcal K^\times)^2$, i.e. that $\E = (\pi_{\mathcal K}^n,  \frac{1}{2}(\alpha_2+\alpha_3))_{\mathcal K}$. We have that $v(c) = v(L) = 0$ and $v(\Delta_f) = n$.

\textit{Suppose $b\neq0$.} Then $\E = (b, -cL)_{\mathcal K}\cdot(-2L, \pi_{\mathcal K}^n)_{\mathcal K}$ and as $L\equiv -(\alpha_2+\alpha_3)(\alpha_1-\frac{1}{2}(\alpha_2+\alpha_3))^2$ it remains to show that $(b,-cL) = 1$. If $v(b)=0$ we are done. If not,
 since $b\!\equiv\! \frac{1}{2}(\alpha_2+\alpha_3)(2\alpha_1+\frac{1}{2}(\alpha_2+\alpha_3))$ we have that $\alpha_1\equiv-\frac{1}{4}(\alpha_2+\alpha_3)$ and $-cL\equiv\frac{9}{256}(\alpha_2+\alpha_3)^6$.

\textit{Suppose $b=0$.} Then $\E = (2c,\pi_{\mathcal K}^n)_{\mathcal K}$ where $2c = 2\alpha_1^2(\alpha_2+\alpha_3)$. \end{proof}
%\newpage
\section{Local Theorem in all remaining cases}\label{s:thmproof}

Here we prove that Theorem \ref{thm:localthm} holds in all remaining cases using a global--local argument. In summary, so far we have proved the following:

\begin{proposition}Let $\mathcal K$ be a local field of characteristic 0 and let $f(x)\in \mathcal K[x]$ be a separable monic cubic polynomial with $f(0)\neq0$. 
Theorem \ref{thm:localthm} holds when \begin{enumerate} \item $\mathcal K = \C$,
\item $\mathcal K = \R$, or
\item $\mathcal K/\Q_p$ is a finite extension for an odd prime $p$, $f(x)\in \mathcal O_{\mathcal K}[x]$, $E, E'/\mathcal K$ are both semistable and $xf(x)$ has at worst one double root $\text{mod } \pi_{\mathcal K}$.\end{enumerate}  
\end{proposition}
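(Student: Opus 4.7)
The plan is essentially bookkeeping: the three listed cases are exactly the three situations already handled earlier in the paper, so the proof amounts to an appropriate citation for each bullet. Concretely, I would proceed as follows.

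For case (1), $\mathcal K = \C$, invoke Proposition \ref{thm:localcomplex}, where both root numbers are $-1$, the Hilbert symbol $H_{f,\C}$ is trivially $+1$, and $\lambda_{f,\C} = (-1)^2 = 1$ by the complex clause of Lemma \ref{lem:kercoker}. For case (2), $\mathcal K = \R$, appeal directly to Proposition \ref{thm:localreals}, whose proof carried out the case-by-case analysis over the six possible configurations of real roots of $xf(x)$ summarised in Table \ref{table:reals}. For case (3), where $\mathcal K/\Q_p$ is finite with $p$ odd, $f \in \mathcal O_{\mathcal K}[x]$, both $E$ and $E'$ are semistable, and $xf(x)$ has at worst one double root mod $\pi_{\mathcal K}$, the statement is precisely the content of Proposition \ref{thm:localQp}, whose proof proceeded through the seven reduction-type rows of Table \ref{reductiontype}.

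The only conceptual content is to verify that the hypotheses of the present proposition match exactly the hypotheses under which each of the three previous propositions was stated. For (1) and (2) this is immediate. For (3), Proposition \ref{thm:localQp} was phrased in terms of the cluster picture of $E'$ having at most one twin, which is precisely the combinatorial translation of ``$xf(x)$ has at worst one double root mod $\pi_{\mathcal K}$'' (clusters of size $\geq 2$ correspond to groups of roots that are congruent modulo $\pi_{\mathcal K}$, after integrality and semistability are imposed).

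There is no real obstacle here: the proposition is a summary of prior work, assembled to delineate precisely which cases remain open for the global--local argument of Section \ref{s:thmproof}. The genuinely hard work lies ahead, namely in extending the identity to non-integral $f$, to wild residue characteristic $p = 2$, to non-semistable reduction, and to the degenerate cluster configurations with two twins or a cluster of size three---handled via the continuity argument of Section \ref{s:continuity} and the reduction steps of Section \ref{s:reduction}.
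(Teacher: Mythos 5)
Your proposal is correct and matches the paper's proof, which consists of the single line citing Propositions \ref{thm:localcomplex}, \ref{thm:localreals} and \ref{thm:localQp} for the three cases respectively. Your extra remark translating ``$xf(x)$ has at worst one double root mod $\pi_{\mathcal K}$'' into the cluster-picture hypothesis of Proposition \ref{thm:localQp} is a reasonable (and correct) elaboration that the paper leaves implicit.
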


\begin{proof}These are Propositions \ref{thm:localcomplex}, \ref{thm:localreals} and \ref{thm:localQp} respectively.\end{proof}

\subsection{Continuity of local invariants}\label{s:continuity}

 \begin{lemma}[\cite{dokchitser2011root} Lemma 3.2]\label{lem:HScontinuity} Let $\mathcal K$ be a local field of characteristic 0. The Hilbert symbol $(A, B)_{\mathcal K}$ is a continuous function of $A, B\in \mathcal K^\times$. \end{lemma}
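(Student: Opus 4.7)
The plan is to use the fact that the codomain $\{\pm 1\}$ is discrete, so continuity of $(A,B)_{\mathcal K}$ on $\mathcal K^\times \times \mathcal K^\times$ is equivalent to it being locally constant. Thus I only need to exhibit, for each $(A_0,B_0)$, an open neighbourhood on which $(A,B)_{\mathcal K}$ is identically $(A_0,B_0)_{\mathcal K}$.

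The first reduction exploits the bilinear-mod-squares nature of the Hilbert symbol: since $(Ac^2, Bd^2)_{\mathcal K} = (A,B)_{\mathcal K}$ for any $c,d\in\mathcal K^\times$, the symbol factors through the map
\[
\mathcal K^\times\times\mathcal K^\times \longrightarrow (\mathcal K^\times/(\mathcal K^\times)^2)\times(\mathcal K^\times/(\mathcal K^\times)^2).
\]
So it suffices to show that the target is discrete, i.e.\ that $(\mathcal K^\times)^2$ is an open subgroup of $\mathcal K^\times$; then every set-theoretic map out of the quotient is automatically continuous, and in particular so is the Hilbert symbol.

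The openness of $(\mathcal K^\times)^2$ is immediate in the archimedean cases: $(\R^\times)^2 = \R_{>0}$ and $(\C^\times)^2 = \C^\times$ are open. For $\mathcal K/\Q_p$ a finite extension, I would invoke Hensel's lemma: given $A_0 = u_0^2$ with $u_0\in\mathcal K^\times$, the polynomial $g(y) = y^2 - A$ has $u_0$ as an approximate root for any $A$ sufficiently close to $A_0$ (quantified in terms of $v(4u_0^2)$), and Hensel's lemma lifts this to an honest square root of $A$ in $\mathcal K^\times$. Hence a whole neighbourhood of $A_0$ lies in $(\mathcal K^\times)^2$, as required.

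Since the proof is essentially a short reduction to the openness of squares, there is no real obstacle; the non-archimedean case is handled by a single application of Hensel's lemma, and once the squares are shown to be open the continuity follows formally from factoring the symbol through a discrete quotient.
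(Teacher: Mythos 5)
Your argument is correct and complete: the paper itself gives no proof of this lemma, merely citing \cite{dokchitser2011root}, Lemma 3.2, and your reduction to the openness of $(\mathcal K^\times)^2$ (trivial over $\R$ and $\C$, and via the strong form of Hensel's lemma with the condition $|A-A_0|<|4A_0|$ in the non-archimedean case, which correctly handles residue characteristic $2$) is the standard proof of that cited result. Nothing is missing.
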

 
        \begin{lemma}\label{lem:invariantcontinuity} Let $\mathcal K$ be a local field of characteristic 0 and $f(x) = x^3+ax^2+bx+c\in \mathcal K[x]$ a separable cubic. The invariants $b$, $c$, $L:=ab-9c$, $\Delta_f$, $w_{E/\mathcal K}$, $w_{\Jac E'/\mathcal K}$ and $\lambda_{f,\mathcal K}$ are continuous in the coefficients of $f(x)$. \end{lemma}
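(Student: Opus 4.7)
The plan is to handle each invariant separately, noting that the root numbers $w_{E/\mathcal K}$, $w_{\Jac E'/\mathcal K}$ and the local factor $\lambda_{f,\mathcal K}$ all take values in the discrete set $\{\pm 1\}$, so that ``continuous'' on the open locus $\{f \text{ separable with } f(0)\neq 0\}$ is equivalent to ``locally constant''.

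The quantities $b$ and $c$ are coordinate projections, and $L = ab - 9c$ together with $\Delta_f = 18abc - 4a^3c + a^2 b^2 - 4b^3 - 27c^2$ are polynomial in $(a,b,c)$, so their continuity is immediate. For the root numbers, if $\mathcal K \in \{\R,\C\}$ then $w_{E/\mathcal K} = w_{\Jac E'/\mathcal K} = -1$ identically, so the claim is trivial. For $\mathcal K/\Q_p$ non-archimedean I would invoke the standard fact (see, e.g., \cite{DDparity}) that the local root number of an elliptic curve is a locally constant function of the $v$-adic coefficients of any Weierstrass model with non-zero discriminant: for $f$ sufficiently $v$-adically close to $f_0$, both $E$ and $E'$ acquire the same Kodaira type and conductor exponent as the corresponding curves attached to $f_0$, and therefore the same local root number.

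For $\lambda_{f,\mathcal K}$, I would apply the case analysis of Lemma \ref{lem:kercoker}. Over $\R$, the factors $n_{E/\R}$, $n_{\Jac E'/\R}$, $n_{\Jac C/\R}$, $|(E\times\Jac E')(\R)^\circ[\phi]|$ and $\mu_{C/\R}$ depend only on the arrangement of real versus complex roots of $xf(x)$, which is fixed on each connected component of $\{\Delta_f\neq 0,\ f(0)\neq 0\}$; local constancy then follows. Over non-archimedean $\mathcal K$, the Tamagawa numbers of $E, \Jac E', \Jac C$ and the deficiency $\mu_{C/\mathcal K}$ are all determined by the reduction, which is itself determined by a bounded truncation of the coefficients of $f$, so they are locally constant on the separable locus. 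At $p = 2$ the additional factor $|\phi^*\omega^\circ_{\Jac C/\mathcal K}/\omega^\circ_{(E\times\Jac E')/\mathcal K}|_{\mathcal K}$ is likewise locally constant, since N\'eron exterior forms depend continuously on the coefficients of the defining equation.

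The main obstacle is ensuring local constancy of the wild contributions at $p = 2$, both in the N\'eron-form ratio and in the local root numbers. This is handled by the standard fact that the N\'eron model of a smooth proper curve over a $p$-adic base is stable under sufficiently small perturbations of the defining equation, so the neighbourhood of $f_0$ can be chosen small enough to preserve every invariant appearing in the statement simultaneously.
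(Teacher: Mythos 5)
Your proposal is correct and in substance identical to the paper's proof, which simply cites \cite[Lemma 9.2]{parityforab} --- that lemma packages exactly the local constancy statements you list (local root numbers, Tamagawa numbers, deficiency, real connected components, and the N\'eron exterior form ratio, all as functions of the coefficients on the locus $\Delta_f\neq 0$, $f(0)\neq 0$). One caveat: your gloss that equal Kodaira type and conductor exponent force equal local root numbers is not a valid deduction for additive reduction in residue characteristic $2$ or $3$; the correct mechanism behind the standard fact you invoke is that the entire Weil--Deligne representation is locally constant in the Weierstrass coefficients (a Krasner-type argument on torsion points), from which local constancy of the root number, conductor and Kodaira type all follow at once.
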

        
        \begin{proof}This can be seen from \cite[Lemma 9.2]{parityforab}. \end{proof}
        
        \begin{lemma}\label{lem:thmcontinuity} Let $\mathcal K$ be a local field of characteristic 0 and $f(x)\in \mathcal O_{\mathcal K}[x]$ be a separable monic cubic. If $\tilde f(x)\in\mathcal O_{\mathcal K}[x]$ is a separable monic cubic with coefficients close enough to those of $f$ then Theorem \ref{thm:localthm} holds for $f$ if and only if it holds for $\tilde f.$ \end{lemma}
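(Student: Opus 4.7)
The plan is to show both sides of the equation in Theorem \ref{thm:localthm}, namely $w_{E/\K}w_{\Jac E'/\K}$ and $\lambda_{f,\K}H_{f,\K}$, are continuous functions of the coefficients of $f$. Since each takes values in the discrete set $\{\pm1\}$, continuity is equivalent to local constancy, so agreement at $f$ forces agreement in a neighbourhood of $f$. By Lemma \ref{lem:invariantcontinuity} the local root numbers, $\lambda_{f,\K}$, and the invariants $b,c,L,\Delta_f$ are continuous in the coefficients, so the crux is the continuity of $H_{f,\K}$.

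On the open locus $bL\neq 0$ this is immediate from continuity of each Hilbert symbol on $\K^\times\!\times\!\K^\times$ (Lemma \ref{lem:HScontinuity}) together with continuity of the four polynomial invariants. The delicate step is matching the two branches of Definition \ref{de:errorterm} across the locus $bL=0$. Fix $f$ with $b=0$ and consider $\tilde f$ close with $\tilde b,\tilde L\neq 0$. Using bilinearity and symmetry of the Hilbert symbol,
$$
(\tilde b,-\tilde c)_\K\,(\tilde L,-\tilde b)_\K\;=\;(\tilde L,-1)_\K\,(\tilde b,-\tilde c\tilde L)_\K.
$$
As $\tilde f\to f$, $\tilde L=\tilde a\tilde b-9\tilde c\to -9c$, so $-\tilde c\tilde L\to 9c^2\in(\K^\times)^2$; openness of $(\K^\times)^2$ in $\K^\times$ then gives $(\tilde b,-\tilde c\tilde L)_\K=1$ for $\tilde f$ sufficiently close to $f$. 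Combined with $(-2\tilde L,\tilde\Delta_f)_\K\to(2c,\Delta_f)_\K$ and $(\tilde L,-1)_\K\to(-c,-1)_\K$ (using that $9$ is a square), this yields $H_{\tilde f,\K}=(-c,-1)_\K(2c,\Delta_f)_\K=H_{f,\K}$ for $\tilde f$ close enough. The case $\tilde b=0$ is handled directly by continuity of the ``otherwise'' formula.

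The remaining boundary $L=0,\,b\neq 0$ is analogous, using
$$
(-2\tilde L,\tilde\Delta_f)_\K\,(\tilde L,-\tilde b)_\K=(-2,\tilde\Delta_f)_\K\,(\tilde L,-\tilde b\tilde\Delta_f)_\K
$$
together with the polynomial identity $\Delta_f=-\tfrac{4b}{9}(a^2-3b)^2$ on the locus $L=0$ (obtained by substituting $c=ab/9$). This makes $-\tilde b\tilde\Delta_f$ close to $\tfrac{4}{9}\tilde b^2(\tilde a^2-3\tilde b)^2\in(\K^\times)^2$, forcing $(\tilde L,-\tilde b\tilde\Delta_f)_\K=1$, and a short Hilbert-symbol computation (using $(4c^2,\cdot)_\K=1$ and symmetry) matches what remains with the ``otherwise'' formula. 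I expect the main obstacle to be keeping the Hilbert-symbol bookkeeping clean at the two boundary loci; once $H_{f,\K}$ is shown continuous on the whole coefficient space, local constancy of both sides of Theorem \ref{thm:localthm} completes the proof.
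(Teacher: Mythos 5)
Your proposal is correct and follows essentially the same route as the paper: both reduce the problem to matching the two branches of Definition \ref{de:errorterm} across the loci $b=0$ and $L=0$ by showing that, for $\tilde f$ close to $f$, the offending Hilbert-symbol entry ($-\tilde c\tilde L$, resp.\ $-\tilde b\Delta_{\tilde f}$) lands in $(\K^\times)^2$, the paper phrasing this via explicit congruences $\tilde L=-\tilde c\cdot\square$ and $\Delta_{\tilde f}=-\tilde b\cdot\square$ modulo $\pi_{\K}^N$ where you invoke openness of the squares. The only cosmetic difference is that your argument treats archimedean and non-archimedean $\K$ uniformly, whereas the paper disposes of the archimedean boundary cases by citing Table \ref{table:reals}.
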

        
        \begin{proof} Write $f(x) = x^3 + ax^2+bx+c$, $\tilde f(x) = x^3 + \tilde ax^2+\tilde bx+\tilde c$. If $b,L\neq 0$ then, ensuring that $\tilde b, \tilde L\neq 0$, this is clear from Lemmas \ref{lem:HScontinuity} and \ref{lem:invariantcontinuity}. If $b = \tilde b = 0$ or $L = \tilde L = 0$ then again this is clear. When we are not in either of these cases, Lemma \ref{lem:invariantcontinuity} still asserts that the root numbers are unchanged and that $\lambda_{f/\mathcal K} = \lambda_{\tilde f/\mathcal K} $, but showing that $\E = H_{\tilde f,\mathcal K}$ is more delicate.
        
        Suppose that $\mathcal K$ is non-archimedean. Let $\pi_{\K}$ be a choice of uniformiser and $v$ a normalised valuation. We write $\square$ for a non-zero square element in $\K$. 
        
        Suppose that $b = 0$ and $\tilde b \neq 0$. Let $N = v(c)+v(36)+1$ and pick $a\equiv \tilde a, b\equiv \tilde b, c\equiv \tilde c \mod \pi_{\mathcal K}^N$. Then $\tilde L = \tilde a \tilde b - 9\tilde c\equiv -9\tilde c \not\equiv 0 \mod \pi_{\mathcal K}^N$ and so $\tilde L = -\tilde c\cdot\square.$ Therefore 
        $H_{\tilde f,\mathcal K} =(\tilde b,-\tilde c)_{\mathcal K}\cdot(2\tilde c, \Delta_{\tilde f})_{\mathcal K}\cdot(-\tilde c, -\tilde b)_{\mathcal K} = (2\tilde c, \Delta_{\tilde f})_{\mathcal K}\cdot(-\tilde c, -1 )_{\mathcal K}$ and by Lemma \ref{lem:HScontinuity} this is equal to $\E$.
        
        Now suppose that $L = 0$ and $\tilde L \neq 0$. Let $N = v(b)+2v(a^2-3b)+v(16)+1$ and pick $a \equiv \tilde a$, $b\equiv \tilde b, c\equiv \tilde c \mod \pi_{\mathcal K}^N$. We have that $\tilde a\tilde b\equiv 9\tilde c\mod \pi_{\mathcal K}^N$, therefore $9\Delta_{\tilde f} \equiv -4\tilde b(\tilde a^2-3\tilde b)^2\not\equiv 0\mod\pi_{\mathcal K}^N$ and $\Delta_{\tilde f} = -\tilde b \cdot\square$. So, $H_{\tilde f,\mathcal K} =(\tilde b,-\tilde c)_{\mathcal K}\cdot(-2\tilde L, -\tilde b)_{\mathcal K}\cdot(\tilde L, -\tilde b)_{\mathcal K} = (\tilde b,-\tilde c)_{\mathcal K}\cdot(-2, -\tilde b)_{\mathcal K}$ and by Lemma~\ref{lem:HScontinuity} this is equal to $\E$ since $\Delta_f = -b\cdot\square$.
        
        The case when $\mathcal K$ is archimedean and $bL=0$ follows from Table \ref{table:reals} since $f$ and $\tilde f$ will have the same number of positive and negative real roots. \end{proof}
        
        \subsection{Reduction steps}\label{s:reduction}
        
        \begin{lemma}\label{lem:integralcoeffs}Let $\mathcal K$ be a non-archimedean local field of characteristic 0. It is sufficient to prove Theorem \ref{thm:localthm} for $f(x)\in \mathcal O_{\mathcal K}[x]$. \end{lemma}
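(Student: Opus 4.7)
The plan is to reduce the general case to the integral one by a single rescaling of the variable that preserves every local invariant appearing in Theorem \ref{thm:localthm}. Given $f(x)=x^3+ax^2+bx+c\in \mathcal K[x]$ with $c\neq 0$, for $u\in\mathcal K^\times$ I would set
$$
\tilde f(X) := u^6 f(X/u^2) = X^3 + au^2 X^2 + bu^4 X + cu^6,
$$
and choose $v(u)$ large enough that $\tilde f\in \mathcal O_{\mathcal K}[X]$. Since $\tilde f(0)=cu^6\neq 0$ and $\Delta_{\tilde f}=u^{12}\Delta_f\neq 0$, the polynomial $\tilde f$ is again a separable monic cubic satisfying the hypotheses of the theorem, so Definition \ref{EE'} produces curves $\tilde E$, $\tilde E'$, and $\tilde C$. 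The claim will be that Theorem \ref{thm:localthm} holds for $f$ iff it holds for $\tilde f$, reducing to the integral case.

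Next I would verify that the substitutions $(x,y)\mapsto (u^2 x,u^3 y)$ on $E$, $(x,y)\mapsto (u^2 x, u^4 y)$ on $E'$, and $(x,y)\mapsto (ux, u^3 y)$ on $C$ give $\mathcal K$-isomorphisms onto $\tilde E$, $\tilde E'$, and $\tilde C$ respectively. Because the kernel of the isogeny $\phi$ in Lemma \ref{lem:isogeny} is described explicitly in terms of the roots of $f$, and the roots of $\tilde f$ are exactly $u^2\alpha_i$, these isomorphisms intertwine $\phi$ with the analogous isogeny $\tilde\phi$ for $\tilde f$. Consequently the kernel and cokernel of $\phi$ on local points, and the deficiency of $C/\mathcal K$, all match those for $\tilde f$; hence $\lambda_{f,\mathcal K}=\lambda_{\tilde f,\mathcal K}$. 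Local root numbers are isomorphism invariants of the Jacobians, so $w_{E/\mathcal K}w_{\Jac E'/\mathcal K}$ is unchanged as well.

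The only calculation is the equality $H_{f,\mathcal K}=H_{\tilde f,\mathcal K}$. Writing $\tilde a=au^2$, $\tilde b=bu^4$, $\tilde c=cu^6$, one finds $\tilde L=\tilde a\tilde b-9\tilde c=u^6 L$ and $\Delta_{\tilde f}=u^{12}\Delta_f$. Thus every entry appearing in Definition \ref{de:errorterm}—the entries $b,-c,-2L,\Delta_f,L,-b$ in the generic case and $-c,-1,2c,\Delta_f$ in the degenerate case—differs from its $\tilde f$-counterpart by an even power of $u$, i.e.\ by a square. By bilinearity of the Hilbert symbol together with $(As^2,B)_{\mathcal K}=(A,B)_{\mathcal K}$, all factors are unchanged; and the equivalences $b=0\Leftrightarrow\tilde b=0$ and $L=0\Leftrightarrow\tilde L=0$ ensure that $f$ and $\tilde f$ fall into the same case of the definition. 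Combining the three steps gives the desired equivalence.

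There is no real obstacle here once the right substitution is identified: the only place a subtlety could arise is in checking that $\phi$ respects the rescaling, and this is immediate from the explicit description of $\ker\phi$ in Lemma \ref{lem:isogeny}. The rest of the proof is bookkeeping.
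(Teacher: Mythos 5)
Your proposal is correct and is essentially the paper's own argument: the paper likewise rescales the coefficients to $d^2a$, $d^4b$, $d^6c$ (your $u$ playing the role of $d$), notes that the curves $y^2=f(x)$ and $y^2=xf(x)$ are unchanged up to $\mathcal K$-isomorphism, and observes that every Hilbert symbol entry in $H_{f,\mathcal K}$ is multiplied by a square. Your write-up merely spells out more of the bookkeeping (the explicit isomorphisms, the intertwining with $\phi$, and the scalings $\tilde L = u^6L$, $\Delta_{\tilde f}=u^{12}\Delta_f$), all of which is accurate.
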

        
        \begin{proof} Let $g(x) = x^3+ax^2+bx+c\in \mathcal K[x]$ and choose $d\in\mathcal K^\times$ such that $d^2a, d^4b, d^6c\in\mathcal O_{\mathcal K}$. Define $f(x) = x^3+d^2ax^2+d^4bx+d^6c\in\mathcal O_{\mathcal K}[x]$. Since $y^2 = g(x) \cong y^2 = f(x)$, $y^2 = xg(x) \cong y^2 = xf(x)$ and $H_{g,\mathcal K} = \E$ (the Hilbert symbol entries have been scaled by squares), Theorem \ref{thm:localthm} holds for $g(x)$ if and only if it holds for $f(x)$.\end{proof}

            \begin{lemma}\label{lem:clusterreduction}Let $K$ be a number field and $f(x) = x^3+ax^2+bx+c\in\mathcal{O}_K[x]$ a separable cubic such that $a^2-3b, b, a^2-4b, ab-9c, c\neq0$. Fix a prime $\mathfrak p\nmid 2,3$ and suppose that the following conditions are satisfied:
         \begin{enumerate} \item if $\mathfrak p\mid a^2-3b$ then $9c\not\equiv ab\mod \mathfrak p$
         \item if $\mathfrak p\mid b(a^2-4b)$ then $c\not\equiv 0\mod \mathfrak p$
         \end{enumerate}
         then $xf(x)$ has either distinct roots or no worse than 1 double root modulo $\mathfrak p$.\end{lemma}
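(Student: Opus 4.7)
\medskip\noindent\textbf{Proof plan.} The plan is to proceed by exhaustion: I would enumerate the combinatorial patterns in which $\overline{xf(x)} \in (\mathcal O_K/\mathfrak p)[x]$ can fail to have ``at most one double root,'' and for each such pattern show that either hypothesis (1) or (2) is violated. Since $\deg xf(x) = 4$, the bad configurations for the multiset of roots of $\overline{xf(x)}$ are precisely (i) two distinct double roots, (ii) a triple root together with a simple root, and (iii) a quadruple root. One of the four roots of $\overline{xf(x)}$ is always $\bar 0$, so each configuration translates into an explicit coincidence pattern among $\bar 0$ and the residues $\bar\alpha_1, \bar\alpha_2, \bar\alpha_3$ of the roots of $f$ in a fixed algebraic closure of the residue field.

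Via Vieta's formulas (valid since $\mathfrak p \nmid 6$), each coincidence pattern yields explicit congruences on $\bar a, \bar b, \bar c$, which I would then convert into relations among the reductions of $b$, $c$, $a^2 - 3b$, $a^2 - 4b$, and $ab - 9c$. In case (i) the pattern is forced to be $\{\bar 0, \bar 0, \bar\beta, \bar\beta\}$ with $\bar\beta \neq 0$, giving $\bar c = 0$, $\bar b = \bar\beta^2 \neq 0$, and $\bar a^2 = 4\bar b$; this puts $\mathfrak p \mid b(a^2-4b)$ while $c \equiv 0 \pmod{\mathfrak p}$, contradicting (2). Case (iii) forces $\bar a = \bar b = \bar c = 0$, again contradicting (2). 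Case (ii) splits into two sub-cases: if two of the $\bar\alpha_i$ equal $\bar 0$, then $\bar b = \bar c = 0$, which again contradicts (2); if instead all three $\bar\alpha_i$ equal a common non-zero value $\bar\beta$, then $f \equiv (x-\bar\beta)^3 \pmod{\mathfrak p}$, so Vieta yields simultaneously $\bar a^2 = 3\bar b$ and $\bar a\bar b = 9\bar c$, which is precisely the configuration excluded by~(1).

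The argument is a pure enumeration, so there is no genuine obstacle. The only subtle point to watch is that the non-vanishing hypotheses on $b, c, a^2-3b, a^2-4b, ab-9c$ are imposed over $K$, and may perfectly well vanish modulo $\mathfrak p$; hypotheses (1) and (2) are precisely the minimal local conditions needed to rule out the three bad multiplicity patterns identified above.
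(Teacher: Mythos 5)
Your proposal is correct and takes essentially the same approach as the paper's proof: an exhaustive enumeration of the bad multiplicity patterns of $xf(x)$ modulo $\mathfrak p$ (two double roots, triple plus simple, quadruple), showing that each forces a violation of hypothesis (1) or (2). The only cosmetic difference is that you extract the coefficient congruences via Vieta's formulas applied to the factored reductions, whereas the paper uses the equivalent criteria $f(-\tfrac{a}{3})\equiv f'(-\tfrac{a}{3})\equiv 0$, $f(0)\equiv f'(0)\equiv 0$, and $c\equiv\Delta_f\equiv 0$.
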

         
         \begin{proof} We must prove that (1) and (2) guarantee that $xf(x)$ has at least 3 distinct roots mod $\mathfrak p$.
         
         Suppose that three roots of $xf(x)$ are congruent mod $\mathfrak p$. This happens when either (i) all three roots of $f(x)$ are congruent, or (ii) two roots of $f(x)$ are congruent to 0. Situation (i) occurs if $f(x)$, $f'(x) = 3x^2+2ax+b$, $f''(x)=6x+2a$ share a root mod $\mathfrak p$, i.e. when $f(-\frac{a}{3})\equiv f'(-\frac{a}{3})\equiv0\mod \mathfrak p$. Condition (1) ensures that this does not happen. We are in situation (ii) whenever $f(0)\equiv f'(0)\equiv 0 \mod \mathfrak p$. Condition (2) ensures that this does not happen. 
         
         Now suppose that $xf(x)$ has two double roots mod $\mathfrak p$, so that two roots of $f(x)$ are (nonzero and) congruent mod $\mathfrak p$ with the remaining root congruent to zero. Here $f(0)\equiv 0$ and $\Delta_f\equiv 0$. By definition of $\Delta_f$, these happen simultaneously when $c\equiv 0$ and $a^2b^2 -4b^3\equiv 0$, so again condition (2) ensures that this does not happen. \end{proof}
         
         \begin{remark}Using cluster pictures (see \S \ref{ss:clusterpics}), Lemma \ref{lem:clusterreduction} says that $\Sigma_{E'/K_{\mathfrak p}}$ is either {\smash{\raise2pt\hbox{\clusterpicture            % [1,2,3]_{\s}
  \Root[] {1} {first} {r1};
  \Root[] {} {r1} {r2};
  \Root[] {} {r2} {r3};
    \Root[] {} {r3} {r4};
  \ClusterD c1[0] = (r1)(r2)(r3)(r4);
\endclusterpicture}}} or  \clusterpicture            % [[1,2]_{n}^{-},3,4]A
  \Root[] {1} {first} {r2};
  \Root[] {} {r2} {r3};
  \Cluster c1 = (r2)(r3);
  \Root[] {1} {c1} {r4};
  \Root[] {} {r4} {r5};
  \ClusterD c3[0] = (r2)(c1)(r4)(r5);
\endclusterpicture. In particular, Proposition \ref{thm:localQp} holds for such a choice of $f$. \end{remark}

\begin{lemma}Let $\mathcal K/\Q_3$ be a finite extension and let $f(x) = x^3+x+1\in \mathcal K[x]$ then Theorem \ref{thm:localthm} holds.\end{lemma}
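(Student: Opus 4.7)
The key observation driving the proof is that $f(x)=x^3+x+1$ has discriminant $\Delta_f = -4b^3-27c^2 = -31$, which is coprime to $3$. Consequently, every geometric object in sight will have good reduction over any finite extension $\mathcal K/\Q_3$, so the plan is to verify that both sides of Theorem~\ref{thm:localthm} simply equal $+1$.

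First I would show that $E$, $\Jac E'$, and $C$ all have good reduction over $\mathcal K$. For $E: y^2 = x^3+x+1$, the elliptic discriminant is $-16\cdot 31 \in \mathcal O_\mathcal K^\times$. Using Remark~\ref{re:Jacobian} with $a=0,\,b=c=1$, one obtains the model $\Jac E' : y^2 = x^3+x^2+1$, whose discriminant is also $-16\cdot 31$, a unit in $\mathcal O_\mathcal K$. For $C: y^2 = x^6+x^2+1$, one factors modulo $3$ via the substitution $u = x^2$ to get $u^3+u+1 \equiv (u-1)(u^2+u-1) \pmod 3$ and checks that the resulting $x^6+x^2+1 \equiv (x-1)(x+1)(x^4+x^2-1) \pmod 3$ is separable; hence $C$ has good reduction.

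From good reduction I would immediately conclude $c_{E/\mathcal K} = c_{\Jac E'/\mathcal K} = c_{\Jac C/\mathcal K}=1$ and $w_{E/\mathcal K} = w_{\Jac E'/\mathcal K} = +1$ (good reduction at a place of odd residue characteristic). The point $(0,1)$ reduces to a smooth point of $C/\F_3$, so Hensel lifts it to $C(\mathcal K)$, giving $C(\mathcal K)\neq\emptyset$ and therefore $\mu_{C/\mathcal K}=1$. Combining with Lemma~\ref{lem:kercoker} gives $|\ker\phi|_\mathcal K|/|\coker\phi|_\mathcal K| = 1$, so $\lambda_{f,\mathcal K} = \mu_{C/\mathcal K}\cdot(-1)^0 = +1$.

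It remains to evaluate $\E$. With $a=0,\, b=1,\, c=1,\, L = ab-9c = -9$ and $\Delta_f = -31$, Definition~\ref{de:errorterm} gives
$$\E = (1,-1)_\mathcal K \cdot (18,-31)_\mathcal K \cdot (-9,-1)_\mathcal K.$$
Using bilinearity and $18 = 2\cdot 3^2,\ -9 = -1\cdot 3^2$ to discard squares, this reduces to $(2,-31)_\mathcal K\cdot(-1,-1)_\mathcal K$. All entries are units in $\mathcal O_\mathcal K$ since $31$ is coprime to $3$; and over a local field of odd residue characteristic, Hilbert symbols of units vanish (both $\mathcal K(\sqrt{-1})$ and $\mathcal K(\sqrt{-31})$ are at worst unramified, over which the norm map on units is surjective). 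Hence $\E = +1$, and both sides of Theorem~\ref{thm:localthm} equal $+1$. The argument is essentially free of obstacles; the only step requiring any care is the good reduction of $C$ over possibly ramified $\mathcal K$, which follows from separability of the reduced sextic.
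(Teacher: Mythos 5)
Your proposal is correct and follows essentially the same route as the paper: good reduction of $E$, $\Jac E'$ (and $C$) at residue characteristic $3$ forces $w_{E/\mathcal K}=w_{\Jac E'/\mathcal K}=1$ and $\lambda_{f,\mathcal K}=1$, and $\E=(1,-1)_{\mathcal K}(18,-31)_{\mathcal K}(-9,-1)_{\mathcal K}=1$ since all entries are units (up to squares) at an odd place. You simply spell out the details (good reduction of $C$, non-deficiency, the Tamagawa number computation via Lemma \ref{lem:kercoker}) that the paper leaves implicit.
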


\begin{proof} $E$ and $\Jac E'$ have good reduction at primes $\mathfrak p|3$ so $w_{E/\mathcal K} = w_{\Jac E'/\mathcal K} = 1$ and 
$\lambda_{f,\mathcal K} = 1.$ That $\E = 1$ follows by definition since $\E = (1,-1)_{\mathcal K}\cdot(18, -31)_{\mathcal K}\cdot(-9,-1)_{\mathcal K}$.
\end{proof}

\begin{proof}[Proof of Theorem \ref{thm:localthm}] We first prove that the theorem holds for $\mathcal K/\Q_2$. Pick a totally real number field $F$ with a unique prime $\mathfrak p\mid2$ so that $F_{\mathfrak p} \cong \mathcal K$ (to see that such a field exists, if $\mathcal K = \Q_2(\theta)$ and $h(x)\in\Q_2[x]$ is the minimal polynomial for $\theta$ then approximate $h(x)$ by some $\tilde h(x) \in \Q[x]$ which has all real roots; then take $F = \Q[x]/\tilde h(x)$). Fix another prime $\mathfrak p\neq \mathfrak p'\nmid 2,3$. By Lemma \ref{lem:integralcoeffs}, we may assume that $f(x) = x^3+ax^2+bx+c \in \mathcal O_{\mathcal K}[x]$. We will approximate $f(x)$ by $\tilde f(x) = x^3 + \tilde ax^2 + \tilde bx + \tilde c\in\mathcal O_F[x]$ subject to the following:
\begin{enumerate}[(i)] \item pick $\tilde a$ to be $\mathfrak p$-adically close to $a$, $\mathfrak p'$-adically close to $-1$ and $\mathfrak q$-adically close to 0 for all $\mathfrak q|3$,
\item pick $\tilde b \neq 0, \frac{1}{4}\tilde a^2, \frac{1}{3}\tilde a^2$ so that $\tilde b$ is $\mathfrak p$-adically close to $b$, $\mathfrak p'$-adically close to $-1$ and $\mathfrak q$-adically close to $1$ for all $\mathfrak q\mid3$,
\item pick $\tilde c \neq 0, \frac{1}{9}\tilde a\tilde b$ so that $\tilde c$ is $\mathfrak p$-adically close to $c$, $\mathfrak p'$-adically close to $1$, $\mathfrak q$-adically close to $1$ for all $\mathfrak q\mid3$, and such that if $\mathfrak q\nmid 2,3, \mathfrak p'$ then conditions (1) and (2) of Lemma \ref{lem:clusterreduction} are satisfied (namely: $9\tilde c\not\equiv \tilde a\tilde b\mod \mathfrak q$ for all $\mathfrak q\mid \tilde a^2-3\tilde b$, and $\tilde c\not\equiv 0\mod \mathfrak q$ for all $\mathfrak q\mid \tilde b(\tilde a^2-4\tilde b)$). 
\end{enumerate} 

Let $\tilde E: y^2 = \tilde f(x)$ and $\tilde E': y^2=x\tilde f(x)$. By construction, $\tilde f(x)$ is $\mathfrak p$-adically close to $f(x)$, $\mathfrak p'$-adically close to $x^3-x^2-x+1$, $\mathfrak q$-adically close to $x^3+x+1$ when $\mathfrak q|3$, and $\mathfrak q$-adically close to a monic cubic for which Proposition \ref{thm:localQp} holds when $\mathfrak q\nmid 2,3,\mathfrak p'$. Invoking Lemma \ref{lem:thmcontinuity}, this means that Theorem \ref{thm:localthm} holds for $\tilde f\in F_v[x]$ whenever $v\neq\mathfrak p$, and proving it for $\tilde f\in F_{\mathfrak p}[x]$ is equivalent to proving it for $f\in F_{\mathfrak p}[x].$

Since $\ord_{\mathfrak p'}(j(\tilde E)), \ord_{\mathfrak p'}(j(\tilde E'))<0$, and the $2$-parity conjecture holds for elliptic curves over totally real number fields with non-integral $j$-invariant \cite[Theorem 2.4]{dokchitser2011root}, it holds for $\tilde E$ and $\tilde E'$. Therefore \begin{alignat*}{2}1 &= (-1)^{\rank_2\tilde E/F+\rank_2\Jac\tilde E'/F}w_{\tilde E/F}w_{\Jac\tilde E'/F}&\\
&=\prod_{v}\lambda_{\tilde f,F_v}w_{\tilde E/F_v}w_{\Jac\tilde E'/F_v}&(\text{= Theorem \ref{thm:paritythm}})\\
&=\lambda_{\tilde f,F_{\mathfrak p}}w_{\tilde E/F_{\mathfrak p}}w_{\Jac\tilde E'/F_{\mathfrak p}}\prod_{\mathfrak p\neq v}H_{\tilde f,F_v}&(\text{= Theorem \ref{thm:localthm}})\\
&=\lambda_{\tilde f,F_{\mathfrak p}}w_{\tilde E/F_{\mathfrak p}}w_{\Jac\tilde E'/F_{\mathfrak p}}H_{\tilde f,F_{\mathfrak p}}&(\text{= Hilbert symbol product law})
\end{alignat*} 
where the products range over all places of $F$. In conclusion, we now know that Theorem \ref{thm:localthm} holds for $\tilde f\in F_{\mathfrak p}[x]$ and so it must also hold for $f\in F_{\mathfrak p}[x]$ where $F_{\mathfrak p}\cong \mathcal K$. 

Using that Theorem \ref{thm:localthm} now holds for $\mathcal K/\Q_2$, we can prove it for $\mathcal K/\Q_p$ ($p$ odd) by repeating the above argument after replacing 2 by $p$ (when $p=3$ we also replace the condition ``$\mathfrak q\mid3$'' by ``$\mathfrak q\mid3$ and $\mathfrak q\neq \mathfrak p$''). \end{proof}

\section{Global results}\label{s:global}

Here we present the 2-parity results we obtain as consequences to Theorem \ref{thm:localthm}.

\begin{theorem}\label{thm:2PCforEC} Let $K$ be a number field and let $f(x)\in K[x]$ be a separable monic cubic polynomial with $f(0)\neq0$. 
The $2$-parity conjecture holds for $E/K$ if and only if it holds for $\Jac E'/K$.\end{theorem}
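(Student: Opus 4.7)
The plan is to assemble the local Theorem \ref{thm:localthm} globally via the product formula for Hilbert symbols, using the parity formula from Theorem \ref{thm:paritythm} to convert the identity into a statement about the $2$-parity conjecture for $E\times\Jac E'/K$, from which the biconditional follows immediately.

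First I would invoke Theorem \ref{thm:paritythm} to write
$$
(-1)^{\rk_2 E/K + \rk_2 \Jac E'/K} = \prod_v \lambda_{f,K_v},
$$
where the product is over all places of $K$ (and is implicitly finite, since $\lambda_{f,K_v}=1$ at all but finitely many $v$). Next I would apply the local theorem (Theorem \ref{thm:localthm}) at each place to substitute $\lambda_{f,K_v} = w_{E/K_v}w_{\Jac E'/K_v} H_{f,K_v}$, obtaining
$$
(-1)^{\rk_2 E/K + \rk_2 \Jac E'/K} = \prod_v w_{E/K_v}w_{\Jac E'/K_v} \cdot \prod_v H_{f,K_v}.
$$

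The key observation is then that the second factor on the right is $1$. Indeed, $H_{f,K_v}$ is defined as a product of at most three Hilbert symbols in the invariants $b,c,L,\Delta_f$ of $f$, all of which are elements of $K^\times$ (independently of $v$). Hence the product formula for Hilbert symbols $\prod_v (A,B)_{K_v}=1$ (for $A,B\in K^\times$) applied termwise yields $\prod_v H_{f,K_v}=1$. Combined with the identity $\prod_v w_{E/K_v}w_{\Jac E'/K_v} = w_{E/K}w_{\Jac E'/K}$, this gives
$$
(-1)^{\rk_2 E/K + \rk_2 \Jac E'/K} = w_{E/K} \cdot w_{\Jac E'/K},
$$
which is the $2$-parity conjecture for the abelian variety $E \times \Jac E'/K$.

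Finally, to conclude the biconditional, I would observe that if the $2$-parity conjecture holds for $E/K$ then $(-1)^{\rk_2 E/K}=w_{E/K}$, and dividing the displayed identity by this relation gives $(-1)^{\rk_2 \Jac E'/K}=w_{\Jac E'/K}$, i.e.\ the $2$-parity conjecture for $\Jac E'/K$; the converse is symmetric. Since all the serious local analysis was already carried out in Sections \ref{s:Archthmproof}--\ref{s:thmproof}, no real obstacle remains at this stage; the only thing one has to be slightly careful about is that the invariants $b,c,L,\Delta_f$ entering the Hilbert symbols are genuinely global elements of $K^\times$ (or else that when any of them vanishes one uses the alternative expression in Definition \ref{de:errorterm}, which is again defined in terms of global quantities), so that the Hilbert product formula applies directly without any local-global comparison.
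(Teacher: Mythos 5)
Your proposal is correct and follows essentially the same route as the paper: apply Theorem \ref{thm:localthm} at every place, take the product, and combine Theorem \ref{thm:paritythm} with the Hilbert symbol product formula to deduce $(-1)^{\rk_2 E/K+\rk_2\Jac E'/K}=w_{E/K}w_{\Jac E'/K}$, from which the biconditional is immediate. Your added care about the entries of $H_{f,K_v}$ being global elements of $K^\times$ (in either branch of Definition \ref{de:errorterm}) is a reasonable point that the paper leaves implicit.
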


\begin{proof} Theorem \ref{thm:localthm} says that for each place $v$ of $K$, $w_{E/K_v}w_{\Jac E'/K_v} = \lambda_{f,K_v} H_{f,K_v}$. Taking the product over all such $v$ then invoking Theorem \ref{thm:paritythm} and the product formula for Hilbert symbols, we see that \begin{equation*}w_{E/K}w_{\Jac E'/K} = (-1)^{\rank_2 E/K+\rank_2 \Jac E'/K}.\qedhere\end{equation*}
\end{proof}

\begin{theorem}\label{thm:2PCsplit}Let $K$ be a number field. The $2$-parity conjecture holds for $\Jac C/K$ whenever $C$ is a genus 2 hyperelliptic curve of the form $C: y^2 = cf(x^2)$ with $c\in K^\times$ and $f(x)\in K[x]$ a separable monic cubic polynomial with $f(0)\ne0$.\end{theorem}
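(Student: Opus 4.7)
The plan is to reduce, via the quadratic twist by $c$, to three applications of Theorem~\ref{thm:2PCforEC}; the twist bridges $C$ with the ``untwisted'' curve $C_0:y^2=f(x^2)$ covered directly by Lemma~\ref{lem:isogeny}. First I would check that $C$ is the quadratic twist $C_0^{(c)}$: the map $y\mapsto y/\sqrt c$ is an isomorphism over $K(\sqrt c)$ with cocycle the hyperelliptic involution, which acts as $[-1]$ on $\Jac C_0$. Since taking Jacobians commutes with quadratic twisting, $\Jac C\cong(\Jac C_0)^{(c)}$, and twisting the $K$-isogeny $\phi$ of Lemma~\ref{lem:isogeny} correspondingly produces a $K$-isogeny $E^{(c)}\times(\Jac E')^{(c)}\to\Jac C$. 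Using that global root numbers and $2^\infty$-Selmer ranks are isogeny invariants, it suffices to establish
\[
(-1)^{\rk_2 E^{(c)}/K\,+\,\rk_2(\Jac E')^{(c)}/K}\;=\;w_{E^{(c)}/K}\,w_{(\Jac E')^{(c)}/K}.
\]

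To obtain this, I would apply Theorem~\ref{thm:2PCforEC}---equivalent to the unconditional identity $(-1)^{\rk_2 E_h/K+\rk_2\Jac E_h'/K}=w_{E_h/K}\,w_{\Jac E_h'/K}$ for every separable monic cubic $h\in K[x]$ with $h(0)\ne 0$---to three choices of $h$. First, to $\tilde f(x)=c^3f(x/c)$: here $\tilde E\cong E^{(c)}$ by construction, and the substitution $(x,y)\mapsto(cx,c^2 y)$ gives a $K$-isomorphism $\tilde E'\cong E'$, so $\Jac\tilde E'\cong\Jac E'$. Second, to $g(x)=(x+\alpha_2\alpha_3)(x+\alpha_1\alpha_3)(x+\alpha_1\alpha_2)$: by Remark~\ref{re:Jacobian}, $E_g\cong\Jac E'$, while applying the same Remark to $g$ shows that $\Jac E_g'$ has roots $f(0)\alpha_i$, hence $\Jac E_g'\cong E^{(f(0))}$. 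Third, to $g^{(c)}(x)=c^3 g(x/c)$: now $E_{g^{(c)}}\cong(\Jac E')^{(c)}$ and $\Jac E_{g^{(c)}}'$ has roots $c^2 f(0)\alpha_i$, so $\Jac E_{g^{(c)}}'\cong E^{(c^2f(0))}\cong E^{(f(0))}$ since $c^2\in(K^\times)^2$.

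Multiplying the three resulting identities, $\rk_2\Jac E'/K$ and $\rk_2 E^{(f(0))}/K$ each appear twice and so cancel modulo $2$ in the exponent, while the corresponding squared root numbers collapse to $1$. What survives is precisely the displayed identity, which by the first paragraph is the $2$-parity conjecture for $\Jac C/K$.

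The main obstacle I anticipate is the initial reduction: one must verify that twisting the isogeny $\phi$ lands exactly at $E^{(c)}\times(\Jac E')^{(c)}$ rather than at some mixed twist, which rests on the hyperelliptic involution acting by $-1$ on both isogeny factors of $\Jac C_0$. Once this is in place (and the standard isogeny invariance of $\rk_2$ is invoked), the remainder is a purely algebraic combination of three instances of Theorem~\ref{thm:2PCforEC}, with no further local arithmetic required.
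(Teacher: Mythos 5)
Your proposal is correct, and it diverges from the paper's proof at the key step. Both arguments begin the same way: the isogeny of Lemma~\ref{lem:isogeny} adapts (in your language, twists) to a $K$-isogeny $E^{(c)}\times(\Jac E')^{(c)}\to\Jac C$ --- the paper writes the source as $E_0\times\Jac E_0'$ with $E_0:y^2=cf(x)$ and $E_0':y^2=cxf(x)$, which are exactly your twists --- and isogeny invariance reduces everything to $2$-parity for that product. (Your worry about the twisted isogeny landing on a ``mixed'' twist is unfounded but harmless: any $K$-isogeny commutes with $[-1]$, and the hyperelliptic involution induces $[-1]$ on $\Jac C_0$, so the twist is automatic.) The paper then disposes of the twist in one line by citing the $2$-parity conjecture for quadratic twists \cite[Corollary 1.6]{dokchitser2011root}, a Kramer--Tunnell-type external input, after which a single application of Theorem~\ref{thm:2PCforEC} to $f$ finishes. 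You instead extract the twisted identity from three applications of Theorem~\ref{thm:2PCforEC}, to $c^3f(x/c)$, to $g(x)=(x+\alpha_2\alpha_3)(x+\alpha_1\alpha_3)(x+\alpha_1\alpha_2)$, and to $c^3g(x/c)$. I have checked the ingredients: each auxiliary cubic is separable, monic, with nonzero constant term (e.g.\ $g(0)=f(0)^2$); the isomorphisms $E_{\tilde f}\cong E^{(c)}$, $E_{\tilde f}'\cong E'$, $E_g\cong\Jac E'$ hold; and Remark~\ref{re:Jacobian} applied to $g$ does give roots $\gamma_j\gamma_k=\alpha_i^2\alpha_j\alpha_k=-\alpha_i f(0)$, i.e.\ the model $y^2=\prod_i(x-f(0)\alpha_i)\cong E^{(f(0))}$, so the three identities multiply to exactly the needed statement with $\Jac E'$ and $E^{(f(0))}$ cancelling mod $2$. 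What your route buys is that Theorem~\ref{thm:2PCsplit} becomes independent of \cite[Corollary 1.6]{dokchitser2011root}: Theorem~\ref{thm:2PCforEC} already encodes the quadratic-twist compatibility of the pair $(E,\Jac E')$. What the paper's route buys is brevity, and since the twist result is invoked anyway in the proof of Theorem~\ref{thm:2tors}, no dependency is saved at the level of the paper as a whole.
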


\begin{proof} Let $E_0:y^2 = cf(x)$, $E_0':y^2 = cxf(x)$. Adapting the proof of Lemma \ref{lem:isogeny} to incorporate the leading term $c\in K^\times$ in $C: y^2 = cf(x^2)$, shows that there is an isogeny $E_0\times \Jac E_0'\rightarrow \Jac C $. It follows that proving the $2$-parity conjecture for $\Jac C/K$ is equivalent to proving it for $E_0\times \Jac E_0'/K$. Define $E: y^2 = f(x)$ and $E':y^2 = xf(x)$. Observe that the $2$-parity conjecture for quadratic twists \cite[Corollary 1.6]{dokchitser2011root} says that the $2$-parity conjecture holds for $E_0/K$ (resp. $\Jac E_0'/K$) if and only if it holds for $E/K$ (resp. $\Jac E'/K$). This reduces proving the $2$-parity conjecture for $\Jac C/K$ to proving it for $E\times \Jac E'/K$, which is precisely Theorem \ref{thm:2PCforEC}.\end{proof}

\begin{lemma}\label{lem:2tors} Let $K$ be a number field and $E_1, E_2/K$ be elliptic curves. If $E_1[2]\cong E_2[2]$ as Galois modules, then either $E_2$ is a quadratic twist of $E_1$ or there exists separable monic $f(x)\in K[x]$ with $f(0)\neq0$ and $d\in K$ such that we have the following isomorphisms of curves
$$E_1\cong y^2 = f(x),\qquad\qquad E_2\cong dy^2=xf(x).$$\end{lemma}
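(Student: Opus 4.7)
The plan is to translate the Galois-module isomorphism $E_1[2]\cong E_2[2]$ into a polynomial identity and then construct explicit changes of coordinates realising one of the two alternatives. Write $E_i: y^2 = f_i(x)$ with $f_i$ a separable monic cubic and roots $\alpha_{i,1},\alpha_{i,2},\alpha_{i,3}$. Since the non-identity 2-torsion of $E_i$ is naturally the $K$-scheme $\mathrm{Spec}\,K[x]/(f_i)$, the given isomorphism $E_1[2]\cong E_2[2]$ is the same data as an isomorphism of étale $K$-algebras $K[x]/(f_1)\cong K[x]/(f_2)$; after relabelling, this takes the form $x\mapsto h(x)$ for some $h(x)\in K[x]$ of degree at most $2$, satisfying $\alpha_{2,i}=h(\alpha_{1,i})$ for each $i$.

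First I would handle the case $\deg h\le 1$. Writing $h(x)=sx+t$ (with $s\ne 0$, forced by separability of $f_2$), the substitution $x\mapsto sx+t$ in $y^2=f_2(x)$ gives $y^2=s^3 f_1(x)$; after rescaling $y\mapsto sy$ this becomes $sy^2=f_1(x)$, so $E_2$ is the quadratic twist of $E_1$ by $s$. The substantive case is $\deg h=2$, and I would write $h(x)=rx^2+sx+t$ with $r\ne 0$. I will then seek shifts $u,v\in K$ and $d\in K^\times$ such that $E_1\cong y^2=f(x)$ with $f(x):=f_1(x-u)$, and $E_2\cong dy^2=xf(x)$. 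The reason this is the right target: arguing as in Remark~\ref{re:Jacobian}, together with the fact that quadratic twist by $d$ multiplies the Weierstrass roots by $d$, the 2-torsion $x$-coordinates of the Weierstrass model of $dy^2=xf(x)$ are $\{-d\tilde\alpha_j\tilde\alpha_k:\{i,j,k\}=\{1,2,3\}\}$, where $\tilde\alpha_i=\alpha_{1,i}+u$ are the roots of $f$.

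Imposing that the roots $\alpha_{2,i}+v=h(\alpha_{1,i})+v$ of the shifted cubic defining $E_2$ equal these values, multiplying through by $\alpha_{1,i}$, and replacing $\alpha_{1,i}^3$ by $-a_1\alpha_{1,i}^2-b_1\alpha_{1,i}-c_1$ (with $f_1=x^3+a_1x^2+b_1x+c_1$), reduces the matching to a polynomial identity of degree $\le 2$ in $\alpha_{1,i}$. Comparing coefficients yields
\[
 s-a_1r=du,\qquad t+v-b_1r=du(a_1-u),\qquad -c_1r=dc_1.
\]
A preliminary translation of $E_1$ arranges $c_1\ne 0$ (possible since $K$ is infinite); the third equation then forces $d=-r\ne 0$, the first gives $u=a_1-s/r$, and the second determines $v$.

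The step I expect to be the main obstacle is verifying that the resulting $f$ satisfies $f(0)\ne 0$, equivalently $u\ne-\alpha_{1,i}$ for every $i$. The key observation is that $u=-\alpha_{1,i}$ is equivalent to $s=r(a_1+\alpha_{1,i})$, and a direct calculation using $\alpha_{1,1}+\alpha_{1,2}+\alpha_{1,3}=-a_1$ then gives $h(\alpha_{1,j})-h(\alpha_{1,k})=r(\alpha_{1,j}-\alpha_{1,k})(\alpha_{1,1}+\alpha_{1,2}+\alpha_{1,3}+a_1)=0$ for $\{i,j,k\}=\{1,2,3\}$, contradicting the separability of $f_2$. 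Hence whenever $h$ is genuinely quadratic, $u\ne-\alpha_{1,i}$ is automatic, and the construction yields $f$ and $d$ satisfying the required isomorphisms.
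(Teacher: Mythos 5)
Your proof is correct, but it takes a genuinely different route from the paper's. The paper encodes the Galois-equivariant bijection $\alpha_i\mapsto\beta_i$ of roots not by the interpolating quadratic but by the unique M\"obius transformation $h(z)=\frac{Dz-B}{A-Cz}$ through the three pairs (with $A,B,C,D$ written out explicitly and $K$-rationality checked via Galois equivariance); the identity $h(x)-h(\alpha_i)=\frac{AD-BC}{(A-Cx)(A-C\alpha_i)}(x-\alpha_i)$ then expresses $g_2(h(x))$ as an explicit constant times $g_1(x)/(A-Cx)^3$, so clearing denominators produces the quartic model $y^2=d\,(1-\frac{C}{A}x)g_1(x)$ in one stroke, the substitution $x'=1-\frac{C}{A}x$ gives the stated form, the affine case $C=0$ is the quadratic-twist alternative, and $f(0)\neq0$ is automatic because the pole $A/C$ of $h$ cannot be a root of $g_1$. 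Your argument replaces the M\"obius map by its reduction modulo $f_1$ (the degree-$\le2$ interpolating polynomial, i.e.\ the \'etale-algebra map) and recovers the model by an Ansatz: matching the roots of a shifted $f_2$ against the roots $-d\tilde\alpha_j\tilde\alpha_k$ of the twisted Jacobian model of Remark \ref{re:Jacobian} and solving the resulting triangular system for $d,u,v$. The two case divisions coincide (the interpolating polynomial is affine exactly when the M\"obius map is), and your separability argument correctly substitutes for the paper's automatic $f(0)\neq0$. The paper's version is shorter and needs no normalisation; yours costs the preliminary translation arranging $c_1\neq0$ and the separate check that $u\neq-\alpha_{1,i}$, but in exchange yields explicit formulas for $d$, $u$, $v$ and makes transparent why the model of Remark \ref{re:Jacobian} is the right target.
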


\begin{proof} Write $E_1:y^2=g_1(x)$, $E_2:y^2 = g_2(x)$ for monic cubics $g_1, g_2\in K[x]$ and $\psi: E_1[2]\to E_2[2]$ for a Galois module isomorphism. Let $\alpha_1, \alpha_2, \alpha_3$ and $\beta_1, \beta_2, \beta_3$ be the roots of $g_1$ and $g_2$ respectively, labelled so that $\psi((\alpha_i,0)) = (\beta_i,0)$. Let 
\begin{align*} A&= \alpha_1\alpha_2(\beta_1-\beta_2)+\alpha_3\alpha_1(\beta_3-\beta_1) + \alpha_2\alpha_3(\beta_2-\beta_3),\\
B&=\alpha_1\alpha_2\beta_3(\beta_2-\beta_1)+\alpha_3\alpha_1\beta_2(\beta_1-\beta_3) + \alpha_2\alpha_3\beta_1(\beta_3-\beta_2),\\
C&=\beta_1(\alpha_2-\alpha_3)+\beta_2(\alpha_3-\alpha_1)+\beta_3(\alpha_1-\alpha_2),\\
D&=\beta_1\beta_2(\alpha_1-\alpha_2)+\beta_2\beta_3(\alpha_2-\alpha_3)+\beta_3\beta_1(\alpha_3-\alpha_1),
\end{align*} so that $h(z):= \frac{Dz-B}{A-Cz}$ is the M\"obius transformation mapping $\alpha_i$ to $\beta_i$. Using that Galois permutes the roots of $g_1$ and the roots of $g_2$ in the same way, one can readily check that $h(z)$ is defined over $K$. Observing that $h(x) - h(\alpha_i)  = \frac{AD-BC}{(A-Cx)(A-C\alpha_i)}(x-\alpha_i)$ gives
$$g_2(h(x)) = \frac{(AD-BC)^3}{(A-Cx)^3(A-C\alpha_1)(A-C\alpha_2)(A-C\alpha_3)}g_1(x).$$ Clearly, $E_2\cong y^2 = g_2(h(x))$ and hence also,
$$E_2\cong y^2 = \begin{cases}\frac{A(A-C\alpha_1)(A-C\alpha_2)(A-C\alpha_3)}{AD-BC}(1-\frac{C}{A}x)g_1(x)&A\neq0,
\\-\frac{C\alpha_1\alpha_2\alpha_3}{B}xg_1(x) &A=0.
\end{cases}$$ 

If $A,C\neq 0$, set $x' = 1-\frac{C}{A}x$ then $E_1\cong y^2 = f(x')$ and $E_2\!\cong\!y^2\!=\!dx'f(x')$ where $f(x')\!=\!g_1(\frac{A}{C}(1-x'))$ and $d\in K^\times$. If $A\neq 0$ and $C=0$, then $E_2$ is just a quadratic twist of $E_1$. Finally, if $A=0$ then the result is clear. \end{proof}

\begin{theorem}\label{thm:2tors} Let $K$ be a number field and $E_1, E_2/K$ be elliptic curves. If $E_1[2]\cong E_2[2]$ as Galois modules, then the $2$-parity conjecture holds for $E_1/K$ if and only if it holds for $E_2/K$.\end{theorem}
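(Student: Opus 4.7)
The plan is to combine three ingredients already assembled in the paper: Lemma \ref{lem:2tors}, the $2$-parity conjecture for quadratic twists from \cite[Corollary 1.6]{dokchitser2011root}, and Theorem \ref{thm:2PCforEC}.

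First I would invoke Lemma \ref{lem:2tors}, which dichotomises the hypothesis $E_1[2] \cong E_2[2]$: either $E_2$ is a quadratic twist of $E_1$, or there exist a separable monic cubic $f(x) \in K[x]$ with $f(0) \neq 0$ and a scalar $d \in K^\times$ such that $E_1 \cong y^2 = f(x)$ and $E_2 \cong dy^2 = xf(x)$. In the first case the conclusion is immediate: \cite[Corollary 1.6]{dokchitser2011root} asserts that the $2$-parity conjecture is stable under quadratic twisting, so $2$-parity for $E_1$ is equivalent to $2$-parity for $E_2$.

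For the second case, set $E := y^2 = f(x)$ and $E' := y^2 = xf(x)$, matching Definition \ref{EE'}, so that $E \cong E_1$. The point $(0,0)$ is a smooth $K$-rational point on the genus-$1$ curve $E'$ (the condition $f(0)\neq 0$ ensures smoothness there), hence $E' \cong \Jac E'$ over $K$. Moreover, the equation $dy^2 = xf(x)$ displays $E_2$ as the quadratic twist of $E'$ by $d$. Chaining the equivalences, $2$-parity for $E_1$ holds if and only if $2$-parity for $E$ holds (Theorem \ref{thm:2PCforEC}) if and only if $2$-parity for $\Jac E' \cong E'$ holds if and only if $2$-parity for $E_2$ holds (quadratic-twist invariance once more). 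This yields Theorem \ref{thm:2tors}.

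The argument is essentially a packaging statement: all the analytic and arithmetic content sits inside Theorem \ref{thm:2PCforEC}, whose proof via the local error term $\E$ and the product formula for Hilbert symbols has already been carried out. The only possible obstacle is bookkeeping — specifically, verifying that $E'$ really is isomorphic to $\Jac E'$ over $K$ (rather than only after base change), which is a consequence of the existence of the rational point $(0,0)$, and checking that the twist parameter $d$ arising from Lemma \ref{lem:2tors} really moves us between $E_2$ and $E'$ via a single quadratic twist. Both checks are routine, so no serious new work is needed.
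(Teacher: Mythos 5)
Your proposal is correct and follows essentially the same route as the paper: Lemma \ref{lem:2tors} to split into the quadratic-twist case and the $dy^2=xf(x)$ case, \cite[Corollary 1.6]{dokchitser2011root} for twist-invariance, and Theorem \ref{thm:2PCforEC} for the remaining equivalence. The paper phrases the second case via $E_2\cong\Jac E_2\cong\mathrm{Jac}(dy^2=xf(x))$ rather than via the rational point $(0,0)$ identifying $E'$ with $\Jac E'$, but this is the same bookkeeping.
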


\begin{proof} By Lemma \ref{lem:2tors}, either $E_2$ is a quadratic twist of $E_1$ and this is just \cite[Corollary 1.6]{dokchitser2011root}, or $E_1\cong y^2 = f(x)$ and $E_2\cong dy^2=xf(x)$ (as curves) for some separable monic $f(x)\in K[x]$ with $f(0)\neq 0$ and $d\in K$. Since $E_2\cong \Jac E_2 \cong \mathrm{Jac}(dy^2 = xf(x))$, \cite[Corollary 1.6]{dokchitser2011root} tells us that the 2-parity conjecture holds for $E_2/K$ if and only if it holds for $ \mathrm{Jac}(y^2 = xf(x))/K$. Theorem \ref{thm:2PCforEC} says that the 2-parity conjecture holds for $\mathrm{Jac}(y^2 = xf(x))/K$ if and only if it holds for $E_1/K$. \end{proof}

%When working with elliptic curves over totally real number fields, the $p$-parity conjecture is known in most cases thanks to the work of Nekov\'a\v r (\cite[Theorem E]{Nekovar3},\cite[Theorem A]{Nekovar1}) and Dokchitser--Dokchitser (\cite[Theorem 2.4]{dokchitser2011root}). When $p=2$ and $E$ does have complex multiplication some cases are proved in (\cite[(5.9)]{Nekovar2}) but this setting is unsolved in general. Theorem \ref{thm:2tors} allows us to prove the missing cases, completing the proof of the $p$-parity conjecture for elliptic curves over totally real fields.

When working with elliptic curves over totally real number fields, the $p$-parity conjecture is known in most cases thanks to the work of Nekov\'a\v r (\cite[Theorem E]{Nekovar3},\cite[Theorem A]{Nekovar1}) and Dokchitser--Dokchitser (\cite[Theorem 2.4]{dokchitser2011root}). When $p=2$ and $E$ does have complex multiplication some cases are proved in (\cite[(5.9)]{Nekovar2}) but this setting is unsolved in general. Theorem \ref{thm:2tors} allows us to prove the missing cases, completing the proof of the $p$-parity conjecture for elliptic curves over totally real fields.

\begin{theorem}\label{2PCtotallyreal} Let $E/K$ be an elliptic curve over a totally real number field with complex multiplication. The $2$-parity conjecture holds for $E/K$. \end{theorem}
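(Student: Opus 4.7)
The plan is to reduce the CM case to the already-known non-CM case. Given a CM elliptic curve $E/K$ with $K$ totally real, I would exhibit a non-CM elliptic curve $E_0/K$ satisfying $E_0[2] \cong E[2]$ as Galois modules; then Theorem~\ref{thm:2tors} transfers the 2-parity conjecture from $E_0/K$ to $E/K$, and Nekov\'a\v r's theorem for non-CM elliptic curves over totally real fields furnishes the 2-parity conjecture for $E_0/K$.

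To produce $E_0$, I would work within a 1-parameter family of elliptic curves with the same 2-torsion as $E$. After a translation, write $E : y^2 = f(x)$ with $f(x) \in K[x]$ a separable monic cubic satisfying $f(0) \neq 0$. For each $t \in K$ with $f(t) \neq 0$, set $g_t(x) = f(x+t)$ and
$$E_t := \Jac\bigl(y^2 = x\, g_t(x)\bigr),$$
which by Remark~\ref{re:Jacobian} is an elliptic curve over $K$ with explicit Weierstrass model. Since $y^2 = g_t(x)$ is isomorphic to $E$ over $K$ via $x \mapsto x - t$, Lemma~\ref{lem:2tors} applied with $E_1 = E$ and $E_2 = E_t$ yields $E_t[2] \cong E[2]$ as Galois modules (alternatively, this can be checked directly: the 2-torsion of $E_t$ is generated by points whose $x$-coordinates lie in the splitting field of $f$, with the same permutation action of $\mathrm{Gal}(\bar K/K)$ induced by the action on the roots of $f$).

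The main technical step is to find $t \in K$ for which $E_t/K$ is non-CM. I would argue that $t \mapsto j(E_t)$ is a non-constant rational function (easily checked from the explicit model; e.g., as $t$ varies the discriminant of the cubic defining $E_t$ is a non-constant polynomial in $t$), while the set of CM $j$-invariants contained in the fixed number field $K$ is finite: for each imaginary quadratic order $\mathcal O$ the associated CM $j$-invariants generate the ring class field $H_\mathcal O/\mathbb Q$, and only finitely many $\mathcal O$ satisfy $[H_\mathcal O : \mathbb Q] \leq [K : \mathbb Q]$. Combining these two facts, $\{t \in K : E_t \text{ is CM}\}$ is finite, so $E_t/K$ is non-CM for all but finitely many $t \in K$.

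I expect the main obstacle to be verifying non-constancy of $j(E_t)$ cleanly and confirming finiteness of CM $j$-invariants in $K$; beyond these routine points, the reduction to Nekov\'a\v r's theorem and the application of Theorem~\ref{thm:2tors} are essentially mechanical.
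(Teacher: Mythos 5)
Your proposal is correct and follows essentially the same route as the paper: the paper likewise shifts $f$ by a parameter $\gamma$, forms $E_\gamma = \Jac(y^2 = x f(x-\gamma))$, notes $E[2]\cong E_\gamma[2]$ as Galois modules, and transfers the $2$-parity conjecture back via Theorem~\ref{thm:2tors}. The only (immaterial) difference is in selecting the auxiliary curve: the paper chooses $\gamma_0$ with $j(E_{\gamma_0})\notin\mathcal O_K$, which rules out CM at once and lands directly in the case covered by \cite[Theorem 2.4]{dokchitser2011root}, whereas you invoke the finiteness of CM $j$-invariants in $K$; note only that the $p=2$ non-CM input you need is attributed in the paper to Dokchitser--Dokchitser rather than to Nekov\'a\v r.
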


\begin{proof}Suppose $E:y^2 = f(x)$ where $f(x) = (x-\alpha_1)(x-\alpha_2)(x-\alpha_3)$. For $\gamma \in K$, set $f_\gamma(x) =  f(x-\gamma)$ and define $E_\gamma = \Jac(y^2 = xf_\gamma(x))$. By remark \ref{re:Jacobian}, a model for $E_\gamma$ is given by
$$y^2 = (x+(\alpha_1+\gamma)(\alpha_2+\gamma))(x+(\alpha_1+\gamma)(\alpha_3+\gamma))(x+(\alpha_2+\gamma)(\alpha_3+\gamma)).$$
Clearly $E[2]\cong E_\gamma[2]$ as Galois modules. The $j$-invariant for $E_\gamma$ is a non-constant rational function in $\gamma$, so for some $\gamma_0\in K$ it will hold that $j(E_{\gamma_0})\not\in\mathcal O_K$, therefore $E_{\gamma_0}$ does not have CM. In particular, the $2$-parity conjecture holds for $E_{\gamma_0}/K$ by \cite[Theorem 2.4]{dokchitser2011root}, and applying Theorem \ref{thm:2tors} gives that the $2$-parity conjecture holds for $E/K$.
\end{proof} 

\begin{corollary}\label{maincorollary}Let $p$ be a prime and $K$ be a totally real field. The $p$-parity conjecture holds for all elliptic curves over $K$.\end{corollary}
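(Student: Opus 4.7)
The plan is to split into cases depending on the prime $p$ and the reduction-theoretic structure of $E/K$, and in each case appeal either to prior results or to Theorem \ref{2PCtotallyreal} just established.

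First I would dispose of the case $p$ odd. For $p \neq 2$, the $p$-parity conjecture for elliptic curves over totally real number fields is already known unconditionally by the combined work of Nekov\'a\v{r} (\cite[Theorem E]{Nekovar3}, \cite[Theorem A]{Nekovar1}) and Dokchitser--Dokchitser (\cite[Theorem 2.4]{dokchitser2011root}): one treats the potentially ordinary/multiplicative cases via Nekov\'a\v{r}'s Hida-theoretic arguments, and the remaining supersingular cases via the $p$-adic analytic approach of Dokchitser--Dokchitser. Citing these, Corollary \ref{maincorollary} holds for every odd prime $p$ and every elliptic curve $E/K$.

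The remaining case is $p = 2$. Here I would split $E/K$ into non-CM and CM. For non-CM elliptic curves over a totally real field, the $2$-parity conjecture is again covered by the above results (Nekov\'a\v{r} handles this uniformly in the non-CM setting via the compatibility of automorphic and Selmer parities for Hilbert modular forms, together with Dokchitser--Dokchitser filling in the curves with non-integral $j$-invariant). Thus the only cases remaining are precisely the CM elliptic curves over totally real fields at $p = 2$.

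The key new input is exactly Theorem \ref{2PCtotallyreal}, which we proved using Theorem \ref{thm:2tors}: any CM elliptic curve $E/K$ with $K$ totally real can be deformed through a one-parameter family $E_\gamma$ of curves with $E[2] \cong E_\gamma[2]$ as Galois modules, and for a generic $\gamma_0 \in K$ the curve $E_{\gamma_0}$ has non-integral $j$-invariant, hence no CM, so the $2$-parity conjecture for $E_{\gamma_0}$ follows from \cite[Theorem 2.4]{dokchitser2011root} and transfers back to $E$ via Theorem \ref{thm:2tors}. Combining these three inputs \emph{verbatim} gives the corollary; no further obstacle arises, since every case is now covered. The main substantive obstacle was the CM case at $p = 2$, which is precisely what Theorem \ref{2PCtotallyreal} resolves, and the role of this corollary is simply to observe that the full statement of the $p$-parity conjecture over totally real fields now follows.
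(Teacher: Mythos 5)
Your proposal is correct and follows essentially the same case decomposition as the paper: odd $p$ via Nekov\'a\v{r}, $p=2$ non-CM via Dokchitser--Dokchitser, and $p=2$ CM via Theorem \ref{2PCtotallyreal}. The only (immaterial) difference is a slight reshuffling of which prior results are credited with which subcase.
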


\begin{proof} When $p$ is odd, this is \cite[Theorem E]{Nekovar3} and \cite[Theorem A]{Nekovar1}. When $p=2$ and the elliptic curve does not have complex multiplication, this is \cite[Theorem 2.4]{dokchitser2011root}. When $p=2$ and the elliptic curve has complex multiplication, this is Theorem \ref{2PCtotallyreal}.\end{proof}

\appendix

\section{Local Conjecture for a family of hyperelliptic curves}\label{gener}
\begin{center}
 by HOLLY GREEN
\end{center}
\medskip
As mentioned in \S1, finding the correct local discrepancy using the above approach is very challenging due to the current lack of a conceptual understanding for it. In fact, there are instances where one can give local formulae for the parity of the $2^{\infty}$-Selmer rank of varieties but where we do not know an expression for the local error term (see \cite{Jordan} for a local formula in the case of Jacobians of genus 2 or 3 curves with a $K$-rational 2-torsion point, and  Remark 1.8 for a discussion on the error term).

The error term found in this article stands out as it can be generalised in the settings presented below and it recovers the error term found in \cite{DDparity} when proving the $2$-parity conjecture for elliptic curves admitting a 2-isogeny. Here we motivate and present the generalisation, providing references for additional details on some of the results.

\subsection{Setup}

Let $K$ be a number field. For a separable monic polynomial $f(x)\in K[x]$ such that $f(0)\neq 0$ we consider the hyperelliptic curves 
 $$C_1 :y^2 = f(x),\qquad\qquad C_2: y^2 = xf(x),\qquad\qquad C:y^2=f(x^2).$$
 
As in \S 2, we can control the parity of the $2^\infty$-Selmer rank of $\Jac C_1 \times \Jac C_2$ using an isogeny.

\begin{lemma}\label{lem:isogeny2} Let $K$ be a field of characteristic $0$ and let $f(x)\in K[x]$ be separable, monic and such that $f(0)\neq0$. Then there is an isogeny $\phi : \Jac C_1 \times \Jac C_2\rightarrow \Jac C$, such that $\phi\phi^t = [2]$. \end{lemma}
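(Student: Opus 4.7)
The plan is to mimic the construction underpinning Lemma \ref{lem:isogeny}, using in place of the Cassels--Flynn cubic machinery the two natural commuting involutions of $C$. Write $\iota_1:(x,y)\mapsto(-x,y)$ and $\iota_2:(x,y)\mapsto(-x,-y)$; both are involutions of $C: y^2 = f(x^2)$ (since $f(x^2)$ depends only on $x^2$), and their product is the hyperelliptic involution $h:(x,y)\mapsto(x,-y)$. Passing to invariants, the quotient $C/\iota_1$ is cut out by $u=x^2$ and is just $y^2=f(u)$, i.e.~$C_1$, while $C/\iota_2$ is cut out by $u=x^2$ and $v=xy$ satisfying $v^2=uf(u)$, i.e.~$C_2$. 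Let $\pi_i:C\to C_i$ denote the resulting degree-$2$ covers.

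First I would set
\[
\phi \;=\; \pi_1^*\oplus\pi_2^*:\Jac C_1\times\Jac C_2\longrightarrow\Jac C,\qquad \phi^t \;=\; (\pi_{1*},\pi_{2*}):\Jac C\longrightarrow\Jac C_1\times\Jac C_2;
\]
that $\phi^t$ is the dual of $\phi$ with respect to the canonical principal polarizations is the standard duality between pull-back and push-forward along a morphism of curves. To confirm $\phi$ is an isogeny, one has to check that dimensions match: writing $n=\deg f$, the genera are $g(C)=n-1$ and $g(C_1)+g(C_2)=\lfloor(n-1)/2\rfloor+\lfloor n/2\rfloor=n-1$.

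The identity $\phi\phi^t=[2]$ is then purely formal. For each $i$ one has the standard trace relation $\pi_i^*\pi_{i*}=1+\iota_i^*$ on $\Jac C$, whence
\[
\phi\phi^t \;=\; \pi_1^*\pi_{1*}+\pi_2^*\pi_{2*} \;=\; 2\cdot\mathrm{id}_{\Jac C}+(\iota_1^*+\iota_2^*).
\]
Since $\iota_2=\iota_1\circ h$ and the hyperelliptic involution acts as $[-1]$ on $\Jac C$, one has $\iota_2^*=-\iota_1^*$, so the bracketed term vanishes and $\phi\phi^t=[2]$. Surjectivity of $[2]$ then forces $\phi$ to be surjective, and by matching dimensions $\phi$ has finite kernel, so it is an isogeny of the desired form.

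The main obstacle I anticipate is essentially bookkeeping: ensuring that the companion map $(\pi_{1*},\pi_{2*})$ is genuinely the dual isogeny $\phi^t$ in the sense of autoduality of Jacobians, so that the equation $\phi\phi^t=[2]$ is being read correctly. Once this duality is in place, all that remains is the trace identity and the sign coming from $h^*=-1$, both of which are completely standard; note also that no extra case analysis is needed for $\deg f$ even versus odd, since the dimension count and the action of $h$ are insensitive to the parity of $n$.
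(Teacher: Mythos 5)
Your construction is correct and is essentially the paper's: the covering maps $(x,y)\mapsto(x^2,y)$ and $(x,y)\mapsto(x^2,xy)$ are exactly the ones the paper names, and the induced map $\phi=\pi_1^*\oplus\pi_2^*$ on Jacobians is the same isogeny. The paper leaves the verification of $\phi\phi^t=[2]$ at the level of ``it can be shown'' (by analogy with the Cassels--Flynn genus-2 case of Lemma \ref{lem:isogeny}), whereas you supply it directly and in arbitrary degree via the trace relation $\pi_i^*\pi_{i*}=1+\iota_i^*$ together with $h^*=[-1]$ and the genus count $g(C_1)+g(C_2)=g(C)$ --- a complete justification of the step the paper omits.
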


\begin{proof} This is similar to the proof of lemma \ref{lem:isogeny}. In particular, it can be shown that the maps $C \to C_1: (x,y) \mapsto (x^2,y)$ and $C \to C_2:  (x,y) \mapsto (x^2,xy)$ induce an isogeny $\Jac C_1 \times \Jac C_2 \to \Jac C$ with the required properties.
\end{proof}

\begin{definition}\label{de:localterm2}Let $\mathcal K$ be a local field and let $f(x)\in \mathcal K[x]$ be separable, monic and such that $f(0)\neq0$. We write $$
\lambda_{f,\mathcal K} = \mu_{C_1/\mathcal K}\mu_{C_2/\mathcal K}\mu_{C/\mathcal K}\cdot (-1)^{\dim_{\F_2}\ker\phi|_{\mathcal K} - \dim_{\F_2}\coker\phi|_{\mathcal K}},
$$
where for a curve $X/\mathcal K$, $\mu_{X/\mathcal K}$ is $-1$ if $X$ is deficient over $\mathcal K$ (as defined in \cite[\S 8]{Poonen}) and $1$ otherwise.\\
Here $\ker\phi|_{\mathcal K} = (\Jac C_1 \times \Jac C_2)(\mathcal K)[\phi]$ and $\coker\phi|_{\mathcal K} = |\Jac C(\mathcal K)/\phi((\Jac C_1 \times \Jac C_2)(\mathcal K))|$.\end{definition}

\begin{theorem}\label{thm:paritythm2}
Let $K$ be a number field and let $f(x)\in K[x]$ be separable, monic and such that $f(0)\neq0$. Then
$$
  (-1)^{\rk_2 \Jac C_1/K+\rk_2 \Jac C_2/K}= \prod_v \lambda_{f,K_v},
$$
where the product runs over all places of $K$.
\end{theorem}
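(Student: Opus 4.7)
The plan is to invoke \cite[Theorem 3.2]{parityforab} applied to the abelian varieties $A = \Jac C_1 \times \Jac C_2$ and $A' = \Jac C$, together with the isogeny $\phi: A \to A'$ constructed in Lemma \ref{lem:isogeny2}, which satisfies the crucial relation $\phi\phi^t = [2]$. This mirrors the strategy used to prove Theorem \ref{thm:paritythm}, where the same result was applied with $A = E \times \Jac E'$ and $A' = \Jac C$; the present statement is essentially the natural generalisation in which the two factors of $A$ are allowed to have arbitrary genus rather than being forced to be elliptic curves.

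First I would confirm that the hypotheses of the cited theorem are met: one needs a $K$-rational isogeny between two abelian varieties over $K$ whose composition with its dual is multiplication by $2$, which is exactly the content of Lemma \ref{lem:isogeny2}. The conclusion of \cite[Theorem 3.2]{parityforab} then expresses $(-1)^{\rk_2 A/K}$ as a product over all places $v$ of $K$ of local invariants attached to $(\phi, K_v)$, incorporating both the Selmer-type contribution $(-1)^{\dim_{\F_2}\ker\phi|_{K_v} - \dim_{\F_2}\coker\phi|_{K_v}}$ and a deficiency factor for each curve whose Jacobian enters the formula.

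The remaining task is to identify the local factors produced by that theorem with $\lambda_{f,K_v}$ as defined in Definition \ref{de:localterm2}. The key difference from the specialisation carried out in Theorem \ref{thm:paritythm} is bookkeeping of deficiencies. In the original setup, $C_1$ and $C_2$ are genus-one curves (so their Jacobians are elliptic curves), and elliptic curves are never deficient in the sense of \cite[\S 8]{Poonen}; only $\mu_{C/\mathcal K}$ survives. In the present generality all three curves $C_1$, $C_2$, $C$ may have positive genus, so all three deficiency contributions $\mu_{C_1/\mathcal K}\,\mu_{C_2/\mathcal K}\,\mu_{C/\mathcal K}$ genuinely appear, which is precisely how $\lambda_{f,K_v}$ has been packaged in Definition \ref{de:localterm2}.

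The main (and only) obstacle is routine: it amounts to tracing through \cite[Theorem 3.2]{parityforab} to verify that the local factor it outputs at each place is exactly $\lambda_{f,K_v}$, paying particular attention to the deficiency contributions from each of the three hyperelliptic curves. Once this identification is made, taking the product over all places of $K$ yields the stated formula.
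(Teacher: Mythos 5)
Your proposal is correct and follows exactly the paper's argument: the proof of Theorem \ref{thm:paritythm2} consists of running the argument of \cite[Theorem 3.2]{parityforab} with $A = \Jac C_1 \times \Jac C_2$ and $A' = \Jac C$, using the isogeny of Lemma \ref{lem:isogeny2}. Your additional remarks on tracking the three deficiency terms (versus the single surviving $\mu_{C/\mathcal K}$ in the genus-one case) accurately explain how Definition \ref{de:localterm2} packages the local factors.
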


\begin{proof}This is argued as in \cite[Theorem 3.2]{parityforab} with $A =\Jac C_1 \times \Jac C_2$ and $A' = \Jac C$. \end{proof}

\subsection{Sturm polynomials}
As in Lemma \ref{lem:kercoker}, computing the quantity $\lambda_{f,\R}$ boils down to counting connected components of $\Jac C_1, \Jac C_2$ and $\Jac C$ over the reals. These numbers can be determined from  
the number of real roots of $f(x)$ and $f(x^2)$. To count the latter we use Sturm's theorem to determine the number of real roots of $f(x)$ greater than 0. 

\begin{definition} \label{de: sturmdef} Let $f(x)\in \R[x]$. The {\it Sturm sequence} for $f$ is a sequence of polynomials $P_0, P_1, \ldots $ defined via \begin{equation*}P_{0}=f(x),\qquad\qquad P_{1}=f'(x),\qquad\qquad P_{i+1}\equiv-P_{i-1}\textup{ (mod }P_i)\text{ for }i\geq 1,\end{equation*} where either $\deg P_{i+1}<\deg P_i$ or $P_{i+1} = 0$. The sequence terminates once one of the $P_i$ is zero.\end{definition}

\begin{definition} Let $f(x)\in \R[x]$ have Sturm sequence $P_0, P_1, \ldots $. We write $\sigma(x_0)$ for the number of sign changes (ignoring zeros) in the sequence of real numbers $$P_0(x_0), P_1(x_0), \ldots .$$\end{definition}

\begin{theorem}[Sturm's theorem] \label{Sturm} Let $f(x)\in\R[x]$ have Sturm sequence $P_0, P_1, \ldots $. The number of roots of $f$ in the interval $(a,b]\subset \R$ is $\sigma(a)-\sigma(b)$.
\end{theorem}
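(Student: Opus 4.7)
The plan is to track the integer $\sigma(x)$ as $x$ increases continuously from $a$ to $b$ and to show that $\sigma$ changes only at points where some $P_i$ vanishes, decreasing by exactly $1$ at each root of $P_0 = f$ in $(a,b]$ and remaining unchanged at each root of an interior $P_i$ ($i\geq 1$). Summing these local changes immediately yields $\sigma(a) - \sigma(b) = \#\{\text{roots of } f \text{ in } (a,b]\}$.

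First I would record two structural facts about the Sturm sequence, which rest on the mild assumption that $f$ is squarefree (the case relevant to the paper, since $f$ is always separable there): (i) because $\gcd(f,f')=1$, the Euclidean procedure of Definition \ref{de: sturmdef} terminates in a nonzero constant, and an easy downward induction shows that no two consecutive members $P_i, P_{i+1}$ share a real root; (ii) at any root $x_0$ of an interior $P_i$, the recurrence $P_{i+1} \equiv -P_{i-1} \pmod{P_i}$ gives $P_{i+1}(x_0) = -P_{i-1}(x_0)$, so combined with (i) both values are nonzero and $P_{i-1}(x_0)$, $P_{i+1}(x_0)$ have opposite signs.

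Next I would carry out a purely local analysis at each ``event'', meaning a root of some $P_i$ lying in $(a,b]$. If $x_0$ is a root of an interior $P_i$, fact (ii) together with continuity forces $P_{i-1}$ and $P_{i+1}$ to keep opposite signs throughout a neighbourhood of $x_0$; in every sign-change tally on that neighbourhood the triple $(P_{i-1}, P_i, P_{i+1})$ then contributes exactly one sign change, independently of the sign or vanishing of $P_i$, so $\sigma$ is invariant across $x_0$. If instead $x_0$ is a root of $P_0=f$, then $P_1(x_0)=f'(x_0)\neq 0$ by squarefreeness and near $x_0$ the sign of $P_0(x)$ agrees with that of $f'(x_0)(x-x_0)$; hence the pair $(P_0,P_1)$ contributes one sign change just to the left of $x_0$ and none just to the right, while every other consecutive pair contributes the same count on both sides by the interior case. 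So $\sigma$ drops by exactly $1$ at each root of $f$, and between events $\sigma$ is locally constant by continuity of the $P_i$.

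To finish, I would piece together the local data along $(a,b]$, paying close attention to the ``ignoring zeros'' convention in the definition of $\sigma$: one checks that skipping zero entries in the sign sequence at $a$ and $b$ effectively assigns $\sigma(a)$ and $\sigma(b)$ the correct one-sided limiting values, in just the right way to make the half-open convention $(a,b]$ come out correctly when $a$ or $b$ happens to coincide with a root of some $P_i$. This endpoint bookkeeping is the only delicate part of the argument and is the step I expect to require the most care; the interior local analysis sketched above is essentially mechanical once facts (i) and (ii) are in hand.
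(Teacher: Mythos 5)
The paper does not prove this statement: it is the classical theorem of Sturm, quoted in the appendix purely as a known tool to motivate taking the Sturm polynomials evaluated at $0$ and $\infty$ as entries of the conjectural Hilbert-symbol error term. So there is no proof in the paper to compare against; your proposal must be judged on its own. What you write is the standard proof and it is essentially correct. You correctly isolate the two facts that make it work for squarefree $f$ (consecutive $P_i$ never vanish simultaneously, and $P_{i+1}(x_0)=-P_{i-1}(x_0)$ at a root $x_0$ of an interior $P_i$), the local analysis at roots of $f$ versus roots of interior $P_i$ is right, and your observation that the ``ignoring zeros'' convention makes $\sigma$ right-continuous at roots of $f$ is exactly what produces the half-open interval $(a,b]$. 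Two small points deserve to be made explicit in a full write-up: first, when several $P_i$ vanish at the same $x_0$, your overlapping triples $(P_{i-1},P_i,P_{i+1})$ should be replaced by a bookkeeping over consecutive \emph{pairs} $(P_j,P_{j+1})$ -- fact (i) guarantees the affected pairs group into disjoint blocks, one per vanishing $P_i$, so the local contributions really do add up independently; second, the last nonzero term $P_m$ is a nonzero constant (a unit multiple of $\gcd(f,f')$), so it never vanishes and the triple argument genuinely covers all interior indices. The squarefreeness hypothesis you flag is indeed necessary for the statement as given, and is harmless here since $f$ is always separable in the paper.
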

In practice, the Sturm sequence for a polynomial provides us with a way to compute the number of roots it has in any half-open interval. Therefore, the Sturm polynomials evaluated at 0 and $\infty$ were good candidates for the entries of the Hilbert symbol of our error term. 

\subsection{Conjecture based on experimental data}

\begin{definition}\label{de:errorterm2} Let $\mathcal K$ be a local field of characteristic 0 and let $f(x) \in \mathcal K[x]$ be separable and monic. Let $P_0, P_1, \ldots $ denote the Sturm sequence for $f$, write $c_i$ for the lead coefficient of $P_i$ and assume that $\deg P_i = \deg f-i$ for $i = 0,\ldots,\deg f$. Whenever $\prod _{i=0}^{\deg f-1}P_i(0) \neq0$, we define $H_{f,\mathcal K}$ to be the following product of Hilbert symbols
$$H_{f,\mathcal K} = \prod_{i=0}^{\deg f-2}(-P_i(0),P_{i+1}(0))_{\mathcal K}\cdot(c_i,-c_{i+1})_{\mathcal K}.$$\end{definition}

\begin{remark}When $f$ is a quadratic, this recovers the error term given in \cite{DDparity} (the isogeny $\phi$ given in lemma \ref{lem:isogeny2} becomes a 2-isogeny of elliptic curves) and when $f$ is a cubic, this is precisely $\E$ as defined in \ref{de:errorterm} (when $b,L\neq0$). \end{remark}

\begin{conjecture}[Local conjecture]\label{conj:localthm2} Let $\mathcal K$ be a local field of characteristic 0 and let $f(x)\in \mathcal K[x]$ be separable monic and such that $f(0)\neq 0$. Define hyperelliptic curves over $\mathcal K$ by $$C_1:y^2 = f(x),\qquad\qquad\qquad C_2:y^2 = xf(x).$$ Let $P_0, P_1, \ldots $ denote the Sturm sequence for $f$. If $\deg P_i = \deg f-i$ for $i = 0,\ldots,\deg f$ and $\prod _{i=0}^{\deg f-1}P_i(0) \neq0$, then
$$w_{\Jac C_1/\mathcal K}w_{\Jac C_2/\mathcal K} = \lambda_{f,\mathcal K} H_{f,\mathcal K}.$$ 
\end{conjecture}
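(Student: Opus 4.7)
The plan is to mirror the three-part structure of the proof of Theorem \ref{thm:localthm}: handle the archimedean places, then non-archimedean places of odd residue characteristic with sufficiently generic reduction, and finally bootstrap to all remaining places via continuity and a global--local argument.

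The archimedean case should be direct. Over $\mathcal K=\C$ every term equals $1$: the root numbers are $-1$ for each Jacobian so their product is $+1$, $\lambda_{f,\C}=1$ by the complex case of the appropriate extension of Lemma \ref{lem:kercoker}, and $H_{f,\C}=1$ since every nonzero complex number is a square. Over $\mathcal K=\R$, the quantity $\lambda_{f,\R}$ depends only on the number $n_+$ of positive real roots of $f$, via the component counts of $\Jac C_1(\R)$, $\Jac C_2(\R)$ and $\Jac C(\R)$ (using Lemma \ref{le:comps} and the fact that the real roots of $f(x^2)$ are $\pm\sqrt{\alpha}$ for each positive real root $\alpha$ of $f$). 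Sturm's theorem gives $n_+=\sigma(0)-\sigma(\infty)$, and the two families of factors in $H_{f,\R}$ are precisely the Hilbert-symbol encodings of the sign patterns of the Sturm sequence at $0$ and at $\infty$. A careful bookkeeping, essentially a generalisation of Table \ref{table:reals}, should then match $H_{f,\R}$ with $w_{\Jac C_1/\R}w_{\Jac C_2/\R}\cdot\lambda_{f,\R}$.

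For non-archimedean $\mathcal K/\Q_p$ with $p$ odd, $f\in\mathcal O_{\mathcal K}[x]$, and $xf(x)$ having only mild degeneration modulo the uniformiser (e.g.\ at most one proper cluster reducing nontrivially), the cluster-picture machinery of Section \ref{s:nonArchthmproof} extends. One classifies the reduction types of $C_2$ via its cluster picture, reads off those of $\Jac C_1$, $\Jac C_2$ using the usual dictionary with \cite{m2d2} and \cite{silverman2009arithmetic} and that of $\Jac C$ via \cite{omri}, then tabulates Tamagawa numbers, deficiencies, and local root numbers against the valuations $v(P_i(0))$ and $v(c_i)$ that govern $H_{f,\mathcal K}$. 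This becomes a larger but still finite case analysis analogous to Table \ref{reductiontype}, and each row should close by elementary manipulation of Hilbert symbols as in the cubic case.

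The remaining non-archimedean cases---even residue characteristic, non-integral coefficients, or more degenerate reduction---should be handled as in Section \ref{s:thmproof}. The continuity Lemmas \ref{lem:HScontinuity} and \ref{lem:invariantcontinuity} permit replacing $f$ by a monic polynomial $\tilde f\in\mathcal O_F[x]$ over a carefully chosen totally real number field $F$ that is $\mathfrak p$-adically close to $f$ at one place and close to polynomials to which the previous two steps apply at all other places; combining Theorem \ref{thm:paritythm2}, the already-established identity at every place other than $\mathfrak p$, and the product formula for Hilbert symbols forces the identity at $\mathfrak p$. \emph{The main obstacle}, and the reason the statement is posed only as a conjecture, is that this strategy requires the global 2-parity conjecture for $\Jac C_1\times\Jac C_2$ over the auxiliary totally real field $F$. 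When $\deg f=3$ this reduces to the 2-parity conjecture for elliptic curves over totally real fields with non-integral $j$-invariant, supplied by \cite[Theorem 2.4]{dokchitser2011root}. For $\deg f\ge 4$ no analogous result is currently available for the higher-dimensional Jacobians involved, and overcoming this appears to require either a genuine higher-genus Nekov\'a\v r-type theorem or an entirely different approach to the local identity that avoids globally-known parity input.
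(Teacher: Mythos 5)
The statement you were asked to prove is explicitly a \emph{conjecture} in the paper: the authors offer no proof, and their Remark following it records only that the archimedean cases are established, that the non-archimedean case is ``work in progress,'' and that for quartic $f$ at odd residue characteristic the identity is known for just three specific cluster pictures of $C_2$. Your submission is therefore correctly calibrated: it is a strategy outline rather than a proof, it mirrors the three-stage architecture the authors themselves use for the cubic case (archimedean computation, cluster-picture case analysis at odd $p$, continuity plus global--local bootstrap), and it correctly identifies the decisive obstruction --- the global--local step needs the $2$-parity conjecture for $\Jac \tilde C_1\times\Jac \tilde C_2$ over the auxiliary totally real field $F$, and no analogue of \cite[Theorem 2.4]{dokchitser2011root} is available for these higher-dimensional Jacobians. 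On this central point your diagnosis agrees with why the statement is posed as a conjecture.

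Two further gaps deserve naming, since your outline treats them as routine. First, the odd-$p$ case analysis is not merely ``larger but still finite'' in any completed sense: for general $\deg f$ one must classify all mildly degenerate cluster pictures of $C_2$, determine the special fibre of $C$ via \cite{omri}, and verify each row against the valuations of the $P_i(0)$ and $c_i$; the paper has done this only for three quartic configurations, and the analogue of Lemma \ref{lem:clusterreduction} guaranteeing that a global polynomial can be chosen to avoid all other configurations at all auxiliary places is itself unproven. Second, $H_{f,\mathcal K}$ is undefined whenever some $P_i(0)=0$ or the Sturm sequence drops degree (the paper's Remark only \emph{expects} a replacement expression to exist), so the continuity argument --- which in the cubic case required a delicate separate treatment of the loci $b=0$ and $L=0$ --- and the choice of $\tilde f$ at the non-distinguished places of $F$ both face an obstacle that is not present in degree $3$. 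Finally, a small but genuine error: over $\C$ the two sides are not ``every term equals $1$''; one has $w_{\Jac C_1/\C}w_{\Jac C_2/\C}=(-1)^{g_{C_1}+g_{C_2}}=(-1)^{\deg f-1}$ and $\lambda_{f,\C}=(-1)^{\dim_{\F_2}\ker\phi}=(-1)^{\deg f-1}$, so the identity holds because both sides carry the same sign, not because each is trivial.
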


\begin{remark}If $P_i(0) = 0$ or $\deg P_{i+1}< \deg P_i - 1$ for some $i$, then $H_{f,\mathcal K}$ is no longer well-defined. It is expected that a different Hilbert symbol expression can be found so that the conjecture still holds, as in definition \ref{de:errorterm}. \end{remark}

\begin{remark} We have proved the conjecture whenever $\mathcal K\cong \C$ or $\mathcal K\cong \R$. Proving it when $\mathcal K$ is a non-archimedean field is work in progress. 
When $f$ is a quartic and $\mathcal K/ \Q_p$ is a finite extension for odd $p$, we have an analogue of proposition \ref{thm:localQp}, namely the conjecture is known to hold when the cluster picture for $C_2$ is {\smash{\raise2pt\hbox{\clusterpicture            % [1,2,3]_{\s}
  \Root[] {1} {first} {r1};
  \Root[] {} {r1} {r2};
  \Root[] {} {r2} {r3};
    \Root[] {} {r3} {r4};
        \Root[] {} {r4} {r5};
  \ClusterD c1[0] = (r1)(r2)(r3)(r4)(r5);
\endclusterpicture}}} or {\clusterpicture            % [[1,2]_{n}^{-},3,4]A
  \Root[] {1} {first} {r2};
  \Root[] {} {r2} {r3};
  \Cluster c1 = (r2)(r3);
  \Root[] {1} {c1} {r4};
  \Root[] {} {r4} {r5};
    \Root[] {} {r5} {r6};
  \ClusterD c3[0] = (r2)(c1)(r4)(r5)(r6);
\endclusterpicture} or
\clusterpicture            % [[1,2]_{n}^{-},3,4]A
  \Root[] {1} {first} {r2};
  \Root[] {} {r2} {r3};
  \Cluster c1 = (r2)(r3);
  \Root[] {1} {c1} {r4};
  \Root[] {} {r4} {r5};
        \Cluster c2 = (r4)(r5);
    \Root[] {1} {c2} {r6};
  \ClusterD c3[0] = (r2)(c1)(c2)(r6);
\endclusterpicture. \end{remark}

\begin{remark}\label{re:2PCforhyp} Supposing that conjecture \ref{conj:localthm2} is true, the 2-parity conjecture holds for $\Jac C_1$ if and only if it holds for $\Jac C_2$. This is argued identically to Theorem \ref{thm:2PCforEC}.\end{remark}

\begin{remark} Applying remark \ref{re:2PCforhyp} repeatedly proves the 2-parity conjecture for hyperelliptic curves over a number field $K$ with defining polynomial $f_1f_2\cdots f_n$, where each $f_i\in K[x]$ is linear (subject to the usual conditions on the Sturm sequence for each product $f_i\cdots f_{n}$ for $i = 1,\ldots, n-3$). \end{remark}

\end{document}